\pgfplotsset{every axis/.append style={
                    axis x line=middle,    
                    axis y line=middle,    
                    axis line style={-,color=darkgray}, 
                    xlabel={$a$},          
                    ylabel={$b$},          
                    grid=none
					}}
\definecolor{mycolor}{RGB}{255,214,179}
\newtheorem{theorem}{Theorem}[section]
\newtheorem{lemma}[theorem]{Lemma}
\newtheorem{proposition}{Proposition}
\newtheorem{conjecture}{Conjecture}
\theoremstyle{definition}
\newtheorem{definition}[theorem]{Definition}
\newtheorem{remark}[theorem]{{\sc Remark}}
\newtheorem{mtema}{{\sc Theorem}}
\title{Renormalization of two-dimensional piecewise linear maps: Abundance of 2-D strange attractors}
\date{}
\author{\centerline{\small (Dedicated to the memory of Professor Welington de Melo)} \\[1.5ex] 
\textsc{A. Pumari\~{n}o\thanks{Dep. de Matem\'{a}ticas. Calvo Sotelo s/n, 33007 Univ. de Oviedo, Spain. E-mail address: apv@uniovi.es, jarodriguez@uniovi.es, vigilkike@gmail.com}, J. A. Rodr\'{\i}guez$^{\ast}$, E. Vigil$^{\ast}$}}
\begin{document}
\maketitle

\begin{abstract}
For a two parameter family of two-dimensional piecewise linear maps and for every natural number $n,$ we prove not only the existence of intervals of parameters for which the respective maps are n times renormalizable but also we show the existence of intervals of parameters where the coexistence of at least $2^n$ strange attractors takes place. This family of maps contains the two-dimensional extension of the classical one-dimensional family of tent maps.

2010 AMS Classification : Primary: $37C70$, $37D45$; Secondary: $37D25$. Key words and phrases: Piecewise linear maps, renormalization, strange attractor.
\end{abstract}


\section{Introduction} \

Several papers in the last century (see \cite{BeCa2},\cite{MoVi},\cite{PuRo1}) have been devoted to analytically prove the existence of strange attractors.

\begin{definition}\label{defstratt}
An \textbf{attractor} for a transformation $ f $ defined on a manifold $ \mathcal{M} $ is a compact, $f-$invariant and transitive set $ \mathcal{A} $ whose stable set
\begin{equation*}
W^{s}( \mathcal{A} ) = \left\{ z \in  \mathcal{M} : \; d(f^{n}(z),\mathcal{A}) \rightarrow 0 \text{ as } n \rightarrow \infty \right\}
\end{equation*} 
has nonempty interior. An attractor is said to be \textbf{strange} if it contains a dense orbit $\left\{ f^{n}(z_{1}): \; n \geq 0 \right \} $ displaying exponential growth of the derivative: there exists some constant $ c > 0 $ such that, for every $ n \geq 0,$
\begin{equation*}
\| Df^{n}(z_{1}) \| \geq e^{cn}.
\end{equation*}
\end{definition}

The above definition, particularly, implies that strange attractors display, in a dense orbit, at least one positive
Lyapounov exponent. The strange attractors found in the previous references are one-dimensional attractors in the sense that they only have one positive Lyapounov exponent. The proof on the existence of these attractors is strongly based on the unfolding of a limit family of unimodal maps. In \cite{BeCa2} and \cite{MoVi} this family is the quadratic one $ \{ f_{a} \}_{a \in [1,2]} $ defined by 
\begin{equation*} 
f_a : x \in [-1,1] \mapsto f_{a} (x)=1-ax^{2} .
\end{equation*}
This family has strange attractors for values of the parameters in a set of positive Lebesgue measure, see \cite{BeCa1}. In a simpler way, the existence of strange attractors can be proved for the family of tent maps $ \{ \lambda_{\mu} \}_{\mu\in (1,2]} $ given by
\begin{equation*}
\lambda_{\mu} : x \in [0,2] \mapsto \lambda_{\mu}(x)=
\left\{
\begin{array}{ll}
\mu x & \mbox{, if } x \in [0,1] \\
\mu (2-x) & \mbox{, if } x \in [1,2] .
\end{array}
\right.
\end{equation*}

In this case, the interval $ I_{\mu}= [\mu(2-\mu),\mu]$ is invariant by $\lambda_{\mu}$ for every $ \mu \in (1,2] $ and it is a strange attractor for every $ \mu \in (\sqrt{2},2].$ Strange attractors with several pieces may also be obtained for values of the parameter in $ (1,\sqrt{2}]$ by using renormalization techniques (see \cite{BrBr}, \cite{MeSt} or \cite{Mel} for details). In order to get abundance of strange attractors with two positive Lyapounov exponents, the authors consider in \cite{PuRoTaVi0} a generic two-parameter family $ f_{a,b} : M \longrightarrow M $ of three-dimensional diffeomorphisms unfolding a generalized homoclinic tangency, as it was originally defined in \cite{Tat}. The unstable manifold involved in this homoclinic tangency has dimension two and the limit family is conjugate to the family of two-dimensional endomorphisms defined on $\mathbb{R}^{2}$ by 
\begin{equation*}
T_{a,b} (x,y)=(a+y^{2},x+by).
\end{equation*}

The dynamical behaviour of the  family $T_{a,b}$ is rather complicated as was numerically pointed out in \cite{PuTa2} and, in particular, the attractors exhibited by $T_{a,b}$ for a large set of parameters seem to be two-dimensional strange attractors. Moreover, in \cite{PuTa1}, a curve of parameters
\begin{equation}\label{curve}
G =\left\{ (a(s),b(s))= \left(-\frac{1}{4}s^{3}(s^{3}-2s^{2}+2s-2),-s^{2}+s\right) : \; s \in \mathbb{R}^2 \right\}
\end{equation} 
has been constructed in such a way that the respective transformation $T_{a(s),b(s)}$ has, for every $ s \in [0,2],$ an invariant region in $\mathbb{R}^{2}$ homeomorphic to a triangle. This curve contains the point $(-4,-2)$ and the map $T_{-4,-2}$ is conjugate to the non-invertible piecewise affine map
\begin{equation*}
\Lambda(x,y)=
\left\{
\begin{array}{ll}
(x+y,x-y) & \mbox{, if } (x,y)\in \mathcal{T}_{0} \\
(2-x+y,2-x-y)& \mbox{, if } (x,y) \in \mathcal{T}_{1},
\end{array}
\right.
\end{equation*}
defined on the triangle $\mathcal{T}=\mathcal{T}_{0}\cup\mathcal{T}_{1}$, where
\begin{eqnarray*} 
\mathcal{T}_{0} & = & \left\{(x,y): \; 0\leq x\leq 1, \; 0\leq y\leq x \right\}, \\
\mathcal{T}_{1} & = & \left\{(x,y): \; 1\leq x\leq 2, \; 0\leq y\leq 2-x \right\}.
\end{eqnarray*}

As a first approach to the study of the dynamics of $T_{a(s),b(s)}$ for $s$ close to $2$, certain family of piecewise linear maps was introduced in \cite{PuRoTaVi1}. This family was defined on the triangle $\mathcal{T}$ by
\begin{equation}\label{Lambda}
\Lambda_{t}(x,y)=
\left\{
\begin{array}{ll}
(t(x+y),t(x-y)) & \text{, if } (x,y)\in \mathcal{T}_{0}\\
(t(2-x+y),t(2-x-y)) & \text{, if } (x,y) \in \mathcal{T}_{1}.
\end{array}
\right.
\end{equation}
These maps can be seen as the composition of linear maps defined by the matrices
\begin{equation*} 
A_{t}=
\left(
\begin{array}{cc}
 t & t \\
 t & -t
\end{array}
\right)
\end{equation*}
with the \textit{fold} of the whole plane along the line $\mathcal{C}=\left\{(x,y)\in \mathbb{R}^{2}: \; x=1 \right\}$ given by
\begin{equation*} 
\mathcal{F}_{\mathcal{C}, \mathcal{O}}(x,y)=
\left\{
\begin{array}{ll}
(x,y) &  \text{, if } x<1,\\
(2-x,y) & \text{, if } x\geq 1.
\end{array}
\right.
\end{equation*}

The triangle $\mathcal{T}$ is invariant for these maps $\Lambda_{t}$ whenever $0\leq t\leq 1$. If $t\leq 1/\sqrt{2}$, then the dynamics of $\Lambda_{t}$ is simple. However, if $t>1/\sqrt{2}$, then the eigenvalues of the matrix $A_{t}$ have modulus greater than one and this fact gives rise to richer dynamics. In particular, as was proved in \cite{PuRoTaVi2} there appear strange attractors. In the case in which $t>1/\sqrt{2}$, the map $\Lambda_{t}$ is called, according to Section 4 in \cite{PuRoTaVi1}, an \textit{Expanding Baker Map} (\textit{EBM} for short). 

To begin with, let us recall the definition of \textit{fold}.

\begin{definition}\label{defifold}
Let $ \mathcal{K} \subset \mathbb{R}^2 $ be a compact and convex domain with nonempty interior, $ P $ a point in $ \mathcal{K} $ and $ \mathcal{L} $ a line with $ \mathcal{L} \cap int(\mathcal{K}) \neq \emptyset $ and $ P \notin \mathcal{L}.$ Then $ \mathcal{L} $ divides $ \mathcal{K} $ into two subsets denoted by $ \mathcal{K}_0 $ and $ \mathcal{K}_1 $ ($ \mathcal{K}_0 $ the one containing $ P $). We define the \textbf{fold} $ \mathcal{F}_{\mathcal{L},P} $ as the map
\begin{equation*}
\mathcal{F}_{\mathcal{L},P}(Q)=\left\{ 
\begin{array}{lcc} 
Q & , \ if \ Q \in \mathcal{K}_0 \\ 
\overline{Q} & , \ if \ Q \in \mathcal{K}_1
\end{array}
\right.
\end{equation*}
where $ \overline{Q} $ denotes the symmetric point of $ Q $ with respect to $ \mathcal{L} .$

In the above conditions, the map $ \mathcal{F}_{\mathcal{L},P} $ is said to be a \textbf{good fold} if $ \mathcal{F}_{\mathcal{L},P}(\mathcal{K}) = \mathcal{K}_0 .$
\end{definition}

Now, let us write $ \mathcal{L}=\mathcal{L}_1 $ and let  $ \mathcal{L}_2 $ be a line with $ \mathcal{L}_2 \cap int(\mathcal{K}_0) \neq \emptyset $ and $ P \notin \mathcal{L}_2. $ Then,  $ \mathcal{L}_2 $ divides $ \mathcal{K}_0 $ into two subsets $ \mathcal{K}_{00} $ and $ \mathcal{K}_{01} $ ($ \mathcal{K}_{00}$ denotes the one containing $ P $). Let us assume that  $ \mathcal{F}_{\mathcal{L}_2,P}(\mathcal{K}_0) = \mathcal{K}_{00} $
(i.e,  $ \mathcal{F}_{\mathcal{L}_2,P}$ is a good fold). Repeating these arguments, we may successively define a sequence of good folds $ \mathcal{F}_{\mathcal{L}_1,P} , \ldots , \mathcal{F}_{\mathcal{L}_n,P} $ where 
\begin{eqnarray*}
& & \mathcal{F}_{\mathcal{L}_1,P}: \mathcal{K} \rightarrow \mathcal{K}_0 \; ,\\[1.5ex]
& & \mathcal{F}_{\mathcal{L}_i,P}: \mathcal{K}_{0 \stackrel{i-1}{\ldots} 0} \rightarrow \mathcal{K}_{0 \stackrel{i}{\ldots} 0} \; , \  i = 2, \ldots, n
\end{eqnarray*}
\ \\[-2ex]
with $ \mathcal{K}_{0 \stackrel{i}{\ldots} 0} \subset \mathcal{K}_{0 \stackrel{i-1}{\ldots} 0} $ and $ P \in \mathcal{K}_{0 \stackrel{i}{\ldots} 0 } $ for every $ i=1,\ldots,n .$

We now recall the concept of \textit{EBM}.

\begin{definition}\label{defEBM}
Let $ \mathcal{K} \subset \mathbb{R}^2 $ be a compact and convex domain with nonempty interior. Let $ P $ be a point in $ \mathcal{K} $ and $ \{ \mathcal{F}_{\mathcal{L}_1,P} , \ldots , \mathcal{F}_{\mathcal{L}_n,P} \} $ a sequence of good folds of $ \mathcal{K} $ with $ P \in \mathcal{K}_{0 \stackrel{i}{\ldots} 0 } $ for every $ i=1 , \ldots , n .$ Let $ A: \mathbb{R}^2 \rightarrow \mathbb{R}^2 $ be an expanding linear map, i.e., $ |det(A)| > 1 .$ Let us consider 
\begin{equation*} 
\widetilde{A}: Q \in \mathbb{R}^2 \mapsto \widetilde{A}(Q)=P+A(Q-P)
\end{equation*}
and assume that $ \widetilde{A}(\mathcal{K}_{0 \stackrel{n}{\ldots} 0}) \subset \mathcal{K} .$ We define the \textbf{Expanding Baker Map} associated to $ P, \ A , \ \mathcal{L}_1,\ldots ,\mathcal{L}_n $ as the map $ \Gamma: \mathcal{K} \rightarrow \mathcal{K} $ given by
\begin{equation*}
\Gamma = \widetilde{A} \circ \mathcal{F}_{\mathcal{L}_n} \circ \ldots \circ \mathcal{F}_{\mathcal{L}_1} .
\end{equation*}
For short, we shall denote 
\begin{equation*}
\Gamma=EBM(\mathcal{K},\mathcal{L}_1,\ldots ,\mathcal{L}_n,P,A) .
\end{equation*}
\end{definition}

The choice of the family $\Lambda_{t}$, $0\leq t\leq 1$, was motivated in \cite{PuRoTaVi0}: the study of the dynamics exhibited by the family $\Lambda_{t}$ is mainly justified when one compares its attractors (numerically obtained in \cite{PuRoTaVi0}) with the attractors (numerically studied in \cite{PuTa2}) for the family $T_{a(s),b(s)}$ with $ s \in [0,2]$ and $(a(s),b(s))\in G,$ see (\ref{curve}). Both families of maps display convex strange attractors, connected (but non simply-connected) strange attractors and non-connected strange attractors (these last ones formed by numerous connected pieces).

A first analytical proof on the existence of a convex strange attractor of  $\Lambda_{t}$ was given in \cite{PuRoTaVi2} for every $t\in (t_{0},1]$, where $t_{0}=\frac{1}{\sqrt{2}}(1+\sqrt{2})^{1/4}$. The appearance of attractors with several pieces suggested the definition of renormalizable \textit{EBM} given in \cite{PuRoVi}. 

\begin{definition}\label{defiresdom}
Let $ \Gamma $ be a map defined in certain domain $ \mathcal{K} .$ We said that $ \mathcal{D} \subset \mathcal{K} $ is a \textbf{restrictive domain} if $ \mathcal{D} \neq \mathcal{K} $ and there exists $ k=k(\mathcal{D}) \in \mathbb{N} $ such that 
\begin{itemize}
\item[i)] $ \Gamma^j(\mathcal{D}) \cap \mathcal{D} = \emptyset $ for every $ j = 1 , \ldots , k-1 \; ,$
\item[ii)] $ \Gamma^k(\mathcal{D}) \subset \mathcal{D} .$
\end{itemize} 
\end{definition}

\begin{definition}\label{defiEBMren}
An \textit{EBM} $ \Gamma $ defined on certain domain $ \mathcal{K} $ is said to be \textbf{renormalizable} if there exists a restrictive domain $ \mathcal{D} $ (with an associated natural number $k=k(\mathcal{D}))$ such that $ \Gamma_{|\mathcal{D}}^k $ is, up to an affine change in coordinates, an \textit{EBM} defined on $ \mathcal{K} .$
\end{definition}

\begin{definition}\label{defiEBMnren}
Let $ \Gamma $ be a \textit{renormalizable EBM} with restrictive domain $ \mathcal{D} $ (with an associated natural number $k=k(\mathcal{D})).$  Let us denote $ \Gamma_1 = \Gamma_{|\mathcal{D}}^k .$ If $ \Gamma_1 $ is a \textit{renormalizable EBM}, we call $ \Gamma $ \textbf{twice renormalizable EBM}. Similarly we may speak about \textbf{three times renormalizable EBMs} , \ldots , \textbf{infinitely renormalizable EBMs}.
\end{definition}

In \cite{PuRoVi} (see the Main Theorem), it was proved that there exist three values of the parameter $t$, $\frac{1}{\sqrt{2}}<t_{3}<t_{2}<t_{1}=2^{-2/5}$, such that $\Lambda_{t}$ is a $n$ times renormalizable \textit{EBM} for every $t\in (\frac{1}{\sqrt{2}},t_n), n=1,2,3.$

The proof of this result is consequence of a renormalization procedure which allows us to understand how connected attractors may break up giving rise to new attractors formed by an increasing number of pieces. Furthermore, the proper renormalization method is fruitful to explain the coexistence of attractors: the renormalization can be simultaneously used on two disjoint restrictive domains to get two different attractors. Numerical evidences allowed us to conjecture that these attractors are strange. 
In fact, \cite{PuRoVi} was finished with the following three conjectures that we are going to prove in the present paper.

\begin{conjecture}\label{cona} 
There exists a decreasing sequence $\left\{ t_{n} \right\}_{n\in \mathbb{N}}$, convergent to $\frac{1}{\sqrt{2}}$ such that $\Lambda_{t} $ is a $n$ times renormalizable \textit{EBM} for every $t\in (\frac{1}{\sqrt{2}},t_{n})$.
\end{conjecture}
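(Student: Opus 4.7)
I would proceed by induction on $n$, with the base cases $n\in\{1,2,3\}$ supplied by the Main Theorem of \cite{PuRoVi}. The engine of the induction is a \emph{parameter renormalization operator} $R$ which encodes the first renormalization of $\Lambda_t$ at the level of the parameter. First I would revisit the construction in \cite{PuRoVi} to show that, for every $t$ in a whole interval $(1/\sqrt{2},t_{1})$, the restrictive domain $\mathcal{D}(t)$ and the return time $k(t)$ built there can be chosen so that the unique affine map carrying $\mathcal{D}(t)$ onto $\mathcal{T}$ transforms $\Lambda_t^{k(t)}|_{\mathcal{D}(t)}$ into a map of the exact form $\Lambda_{R(t)}$. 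This is natural because $\Lambda_t$ is manufactured from the matrix $A_t$ (whose eigenvalues are $\pm t\sqrt{2}$) and from canonical folds of $\mathcal{T}$; iterating $\Lambda_t$ and conjugating by an affine dilation produces again a matrix proportional to $A_1$, so the new parameter $R(t)$ is explicit in terms of the expansion factor of $A_t^{k(t)}$ and of the diameter of $\mathcal{D}(t)$.

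The two properties of $R$ that drive the induction are continuity on $(1/\sqrt{2},t_{1})$ and the fixed-point identity $\lim_{t\to(1/\sqrt{2})^{+}}R(t)=1/\sqrt{2}$. Granting both, I would define
\[
t_{n+1}\;=\;\sup\Big\{\,t\in(1/\sqrt{2},t_{1})\,:\,R\big((1/\sqrt{2},t]\big)\subseteq(1/\sqrt{2},t_{n})\,\Big\}.
\]
Continuity together with $R(1/\sqrt{2})=1/\sqrt{2}$ ensures $t_{n+1}\in(1/\sqrt{2},t_{n})$, so $\{t_n\}$ is well defined and strictly decreasing. For $t\in(1/\sqrt{2},t_{n+1})$ we have $R(t)\in(1/\sqrt{2},t_{n})$, so the inductive hypothesis gives that $\Lambda_{R(t)}$ is $n$ times renormalizable; transporting this through the conjugacy $\Lambda_t^{k(t)}|_{\mathcal{D}(t)}\simeq\Lambda_{R(t)}$ yields $(n+1)$ times renormalizability of $\Lambda_t$. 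The convergence $t_n\to 1/\sqrt{2}$ follows from the expanding character of $R$ at its fixed point: the factor $t\sqrt{2}$ controls all local expansions and equals $1$ precisely at $t=1/\sqrt{2}$, forcing $R'(1/\sqrt{2})>1$, which in turn forbids the monotone sequence $\{t_n\}$ from accumulating anywhere strictly above $1/\sqrt{2}$.

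The main technical obstacle is the construction of $R$ on the whole interval $(1/\sqrt{2},t_{1})$ together with the verification of its behaviour at the critical value $t=1/\sqrt{2}$. A restrictive domain must be produced for every $t$ in this range, not only for the three specific values handled in the Main Theorem of \cite{PuRoVi}; natural candidates arise as preimages under $\Lambda_t$ of the domains used there, but their existence and the disjointness properties (items (i)--(ii) of Definition \ref{defiresdom}) depend sensitively on the combinatorics of the folds $\mathcal{F}_{\mathcal{L}_i}$ which describe the iterates of $\Lambda_t$. Pinning down these combinatorics uniformly in $t$ and then confirming that the affine rescaling factor degenerates in such a way that $R(t)\to 1/\sqrt{2}$ (rather than, for instance, $1$) is the heart of the argument. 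Once this limit analysis is in place and $R$ has been identified as a continuous map with a repelling fixed point at $1/\sqrt{2}$, the rest of the proof of Conjecture \ref{cona} is a clean cascade.
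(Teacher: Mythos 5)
Your proposal rests on a structural assumption that fails for this family: you posit a scalar renormalization operator $R$ such that the affine rescaling of $\Lambda_t^{k(t)}|_{\mathcal{D}(t)}$ is again a member of the \emph{same} one-parameter family, i.e.\ equals $\Lambda_{R(t)}$. It is not. The map $\Lambda_t$ is an \textit{EBM} with a single fold (along $\mathcal{C}$), whereas its renormalization --- as established in \cite{PuRoVi} and recalled in the introduction --- is an \textit{EBM} with \emph{two} folds, along $\mathcal{C}$ and along a second line $\mathcal{L}(b)$. Concretely, $\Lambda_t^{8}$ is (up to affine conjugacy on the restrictive domain) the map $\Psi_{a,b}$ with $a=16t^{8}$ and $b=1/(2t^{3})$, so the renormalization leaves the one-parameter family and lands in the two-parameter family $\mathbb{F}$. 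Consequently there is no continuous self-map $R$ of $(1/\sqrt{2},t_1)$ encoding the renormalization, your recursive definition of $t_{n+1}$ has no object to act on beyond the first step, and the induction cannot start its second round. You flag the construction of $R$ as ``the main technical obstacle,'' but the obstacle is not a verification issue: the object you want does not exist in one parameter.

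The paper's proof replaces your scalar recursion by a genuinely two-dimensional one. It constructs operators $H_{\Delta},H_{\Pi}$ on a parameter region $\mathcal{P}_3\subset\mathcal{P}$ such that $\Psi^4_{a,b}$ restricted to a restrictive domain is affinely conjugate to $\Psi_{H(a,b)}$, proves $\mathcal{P}_3\subset H(\mathcal{P}_3)$, and thereby obtains a nested chain $A_0\varsupsetneq A_1\varsupsetneq\cdots$ with $H(A_n)=A_{n-1}$ and $\bigcap_n cl(A_n)=\{(1,\sqrt{2})\}$, the repelling fixed point of $H$. Membership of $(a,b)$ in $A_n$ gives $n$-fold renormalizability of $\Psi_{a,b}$. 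The statement for $\Lambda_t$ then follows because the curve $\gamma_0(t)=(16t^{8},1/(2t^{3}))$ emanates from the fixed point $(1,\sqrt{2})$ and therefore cuts every $A_n$; one takes $t_n$ so that $\gamma_0(t)\in A_n$ for $t\in(1/\sqrt{2},t_n)$. Note also that the finer part of the argument (used for the second statement of Theorem~\ref{tmab}) compares the tangent vector of $\gamma_0$ at $(1,\sqrt{2})$ with the eigenvectors of $DH_{\Delta}$ --- information that is intrinsically two-dimensional and invisible to any one-parameter scheme. Your heuristic that the expansion factor $t\sqrt{2}$ forces a repelling fixed point at $t=1/\sqrt{2}$ is a reasonable intuition for why $t_n\to 1/\sqrt{2}$, but the rigorous version of that statement is the spectrum $\{4,\,3+2\sqrt{2}\}$ of $DH_{\Delta}(1,\sqrt{2})$ acting on the two-dimensional parameter plane, not a derivative $R'(1/\sqrt{2})$.
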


\begin{conjecture}\label{conb} 
There is no value of $t$ for which $\Lambda_{t} $ is infinitely many renormalizable.
\end{conjecture}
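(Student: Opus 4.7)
The plan is to proceed by contradiction, tracking how the spectral radius of the linear part of the renormalized EBM grows, and showing that this growth is eventually incompatible with the EBM structure on $\mathcal{T}$.

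Suppose $\Lambda_{t}$ is infinitely renormalizable for some $t\in(1/\sqrt{2},1]$, producing nested restrictive domains $\mathcal{D}_{1}\supset\mathcal{D}_{2}\supset\cdots$ with return times $k_{1},k_{2},\ldots\ge 2$. The first observation is that affine conjugation leaves the linear part of a piecewise affine map unchanged. Since the linear part of $\Lambda_{t}$ on each of its two affinity pieces is $A_{t}$, whose determinant is $-2t^{2}$ and whose eigenvalues are $\pm t\sqrt{2}$, the linear part of the $n$-th renormalized EBM $\Gamma_{n}$, read in the coordinates of $\mathcal{T}$, coincides (up to sign) with $A_{t}^{k_{1}k_{2}\cdots k_{n}}$ and therefore has spectral radius $(t\sqrt{2})^{k_{1}k_{2}\cdots k_{n}}$. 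Since $t\sqrt{2}>1$ and each $k_{i}\ge 2$, this quantity tends to $+\infty$ as $n\to\infty$.

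I would then use the renormalization procedure supporting Conjecture~\ref{cona} to argue that $\Gamma_{n}$ actually belongs to the one-parameter family $\{\Lambda_{t'}\}_{t'\in(1/\sqrt{2},1]}$: the restrictive domains $\mathcal{D}_{n}$ are chosen as triangles affinely similar to $\mathcal{T}$, and a careful count of how many times the orbit of $\mathcal{D}_{n}$ crosses the fold line $\mathcal{C}=\{x=1\}$ yields an EBM with the same single-fold combinatorial structure as $\Lambda_{t}$ itself. Matching linear parts then identifies the renormalized parameter $t_{n}$ by
\begin{equation*}
t_{n}\sqrt{2}=(t\sqrt{2})^{k_{1}k_{2}\cdots k_{n}}.
\end{equation*}
The invariance of $\mathcal{T}$ under $\Lambda_{t_{n}}$ (recall that this requires $t_{n}\le 1$) is then equivalent to $(t\sqrt{2})^{k_{1}\cdots k_{n}}\le\sqrt{2}$ for every $n$, i.e.\ to the uniform bound $k_{1}\cdots k_{n}\le \log\sqrt{2}/\log(t\sqrt{2})<\infty$, which is impossible since $k_{1}\cdots k_{n}\ge 2^{n}$.

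The main obstacle is establishing that each renormalization keeps us inside the family $\{\Lambda_{t'}\}$. A priori, the $k_{1}\cdots k_{n}$-th iterate of $\Lambda_{t}$ could produce an EBM with several fold lines and a linear part whose eigenstructure differs from $A_{t'}$, in which case the clean recursion $t_{n}\sqrt{2}=(t\sqrt{2})^{k_{1}\cdots k_{n}}$ is not directly available. Overcoming this requires the detailed combinatorial description of the restrictive domains and of the orbit-crossings of the fold line $\mathcal{C}$ that is carried out in tandem with the proof of Conjecture~\ref{cona}. Should that description only yield a general EBM, the argument must be recast as an a priori upper bound on the spectral radius of the linear part of any EBM on $\mathcal{T}$ whose folded subdomain $\mathcal{T}_{0\stackrel{m}{\ldots}0}$ has diameter uniformly bounded below; the ensuing contradiction is the same. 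In either formulation, once the geometric input is in place, the conclusion reduces to the fact that the renormalization operator $t\mapsto(t\sqrt{2})^{k}/\sqrt{2}$ has $t=1/\sqrt{2}$ as its unique fixed point, with multiplier $k\ge 2$, so that the forward orbit of any $t>1/\sqrt{2}$ is driven out of $(1/\sqrt{2},1]$ in finitely many steps.
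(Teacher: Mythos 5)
Your strategy has a genuine gap at its central step, and it is the one you yourself flag: nothing guarantees that the successive renormalizations of $\Lambda_t$ stay inside the one-parameter family $\{\Lambda_{t'}\}$. In fact the paper shows the opposite: already the first renormalization of $\Lambda_t$ produces an \textit{EBM} with \emph{two} folds, namely a member $\Psi_{a,b}$ of the two-parameter family $\mathbb{F}$, and the operators $H_{\Delta}$, $H_{\Pi}$ keep one inside $\mathbb{F}$, never back in $\{\Lambda_{t'}\}$. Hence the recursion $t_n\sqrt{2}=(t\sqrt{2})^{k_1\cdots k_n}$ and the cap $t_n\le 1$ are simply not available. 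Your fallback --- an a priori bound on the spectral radius of the linear part of an arbitrary \textit{EBM} on $\mathcal{T}$ --- is false without further hypotheses: Definition \ref{defEBM} only requires $\widetilde{A}(\mathcal{K}_{0 \stackrel{n}{\ldots} 0})\subset\mathcal{K}$, and by taking many folds the core $\mathcal{K}_{0 \stackrel{n}{\ldots} 0}$ can be made arbitrarily small, so $|\det(A)|$ is unbounded over the class of \textit{EBMs} on a fixed domain. Your added hypothesis that the core has diameter uniformly bounded below is precisely the missing combinatorial input, and it is not supplied. (A smaller imprecision: the derivative of $\Lambda_t^{N}$ is a product of copies of $A_t$ interleaved with reflections coming from the folds, conjugated by the affine chart, not $\pm A_t^{N}$; the spectral-radius conclusion survives only because $A_t$ happens to be $t\sqrt{2}$ times an orthogonal matrix, or, more robustly, via $|\det D\Lambda_t^{N}|=(2t^2)^{N}$.)

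For comparison, the paper disposes of Conjecture \ref{conb} with a soft argument stated right after Theorem \ref{tmab}: renormalizations are carried on nested restrictive domains with nonempty interior, and infinitely many renormalizations of \textit{EBMs} cannot all be performed on domains with nonempty interior. Your quantitative idea --- an expansion-type quantity growing like $(t\sqrt{2})^{k_1\cdots k_n}$ against a uniform cap imposed by membership in the relevant family --- is closer in spirit to the proof of Theorem \ref{tmab}(i), where the cap is $a\le 2$ in $\mathcal{P}$ and the growth is $a\mapsto a^4$ under $H_{\Delta}$. But turning that into a proof of Conjecture \ref{conb} would additionally require showing that \emph{every} renormalization of $\Lambda_t$, on \emph{any} admissible restrictive domain, is governed by such an operator; that is work beyond what your proposal contains, so as written the contradiction does not follow.
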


\begin{conjecture}\label{conc} 
For each natural number $n$ there exists an interval $I_{n}\subset (\frac{1}{\sqrt{2}},t_{n}$) such that $\Lambda_{t} $ displays, at least, $2^{n-1}$ different strange attractors.
\end{conjecture}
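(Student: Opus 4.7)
My plan for proving Conjecture \ref{conc} is to combine an explicit renormalization operator on the parameter space (which will also yield Conjectures \ref{cona} and \ref{conb}) with a ``doubling'' phenomenon at each renormalization step. The starting observation is that for $t \in (1/\sqrt{2}, t_1)$, a single renormalization of $\Lambda_t$ should produce not one but \emph{two} disjoint restrictive domains $\mathcal{D}_0(t), \mathcal{D}_1(t) \subset \mathcal{T}$ on each of which the return map is, after an affine change of coordinates, again of the form $\Lambda_{R(t)}$ for an explicit operator $R:(1/\sqrt{2}, t_1) \to (1/\sqrt{2}, t_1)$ satisfying $R(t) \to 1/\sqrt{2}^{+}$ as $t \to 1/\sqrt{2}^{+}$. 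The two domains should arise because the preimage of the next-level invariant region under $\Lambda_t$ straddles the critical line $\mathcal{C}$, so the fold $\mathcal{F}_{\mathcal{C},\mathcal{O}}$ produces two symmetric pieces; one must verify further that the full forward orbits $\{\Lambda_t^{j}(\mathcal{D}_0(t))\}_{j}$ and $\{\Lambda_t^{j}(\mathcal{D}_1(t))\}_{j}$ are pairwise disjoint, so attractors from the two domains cannot merge.

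Once $R$ is constructed, Conjecture \ref{cona} follows by setting $t_{n+1} = R^{-1}(t_n)$, a decreasing sequence with $t_n \downarrow 1/\sqrt{2}$, for which $\Lambda_t$ is $n$-times renormalizable on $(1/\sqrt{2}, t_n)$. Conjecture \ref{conb} is then immediate from $\bigcap_n (1/\sqrt{2}, t_n) = \emptyset$. For Conjecture \ref{conc} I proceed by induction on $n$. The base case $n=1$ takes $I_1 \subset (1/\sqrt{2}, t_1)$ to be an interval on which $\Lambda_t$ has a single strange attractor, whose existence follows by adapting the techniques of \cite{PuRoTaVi2} to the renormalized EBM (after one renormalization we are carried into the parameter regime where the convex strange attractor is already known to exist). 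For the inductive step, set $I_{n+1} = R^{-1}(I_n) \cap (1/\sqrt{2}, t_{n+1})$. For $t \in I_{n+1}$, the return map on each $\mathcal{D}_i(t)$ is affinely conjugate to $\Lambda_{R(t)}$, which by hypothesis carries at least $2^{n-1}$ strange attractors. Each of these lifts via the period-$k$ return to a strange attractor of $\Lambda_t$ supported in the orbit of $\mathcal{D}_i(t)$; the pairwise disjointness of the orbits of $\mathcal{D}_0(t)$ and $\mathcal{D}_1(t)$ guarantees the $2^{n-1}$ attractors coming from $\mathcal{D}_0$ and the $2^{n-1}$ from $\mathcal{D}_1$ are genuinely distinct, yielding $2 \cdot 2^{n-1} = 2^{(n+1)-1}$ coexisting strange attractors of $\Lambda_t$.

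The principal obstacle is the explicit identification of $R$ and of the two disjoint restrictive domains in closed form, valid throughout a one-sided neighborhood of $1/\sqrt{2}$ rather than only at the three isolated values $t_1 > t_2 > t_3$ treated in \cite{PuRoVi}. This requires a combinatorial analysis of how the iterates $A_t^j$ intersect the critical line $\mathcal{C}$ and its pullbacks; the difficulty grows as $t \downarrow 1/\sqrt{2}$ because these critical curves accumulate near the fixed point of $\Lambda_t$, so the renormalization period $k = k(t)$ tends to infinity. A secondary issue is that the strange-attractor property must be preserved when lifting through the period-$k$ return: the exponential derivative growth along a dense orbit of $\Lambda_t^k|_{\mathcal{D}_i}$ transfers to $\Lambda_t$ by the chain rule and boundedness of the affine conjugacy, provided the expansion rate of the renormalized EBM is uniformly bounded below on compact subintervals of $(1/\sqrt{2}, t_n)$, which should be verifiable from the explicit form of $R$ and of the expanding block of $\Lambda_{R(t)}$.
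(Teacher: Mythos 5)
There is a genuine structural gap at the heart of your plan: the renormalization of $\Lambda_t$ does \emph{not} close up inside the one-parameter family $\{\Lambda_t\}$. As recalled in the introduction (and proved in \cite{PuRoVi}), the return map of $\Lambda_t$ on a restrictive domain is an \textit{EBM} with \emph{two} folds, so there is no operator $R:(\frac{1}{\sqrt{2}},t_1)\to(\frac{1}{\sqrt{2}},t_1)$ with the renormalized map affinely conjugate to $\Lambda_{R(t)}$. One is forced into the two-parameter family $\mathbb{F}=\{\Psi_{a,b}\}$ of \eqref{defF}, where the renormalization operators $H_{\Delta}$ and $H_{\Pi}$ act on a two-dimensional parameter region with a repelling fixed point at $(1,\sqrt 2)$. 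This invalidates both your construction $t_{n+1}=R^{-1}(t_n)$ and your inductive step: the two restrictive domains $\Delta_{a,b}$ and $\Pi_{a,b}$ do exist (Theorem \ref{tmaa}), but they renormalize to \emph{different} parameter values ($b'=a^{-2}\gamma_{a,b}$ versus $b''=a^{-1}\gamma_{a,b}$), they share the edge $\overline{P_{a,b}P'}$ so their forward orbits are not pairwise disjoint as you assert, and iterating the doubling would require tracking $2^n$ distinct parameter points all remaining in the simultaneous-renormalization region $\mathcal{P}_3$ after each step --- none of which is controlled by your argument. The replacement for your one-dimensional dynamics of $R$ is the genuinely two-dimensional issue of how the curve $\gamma_0(t)=(16t^8,\tfrac{1}{2t^3})$ (coming from $\Lambda_t^8=\Psi_{a,b}$) behaves under iteration of $H_\Delta$; this is resolved by comparing the slope $\varphi_0'(1)=-\tfrac{3\sqrt2}{8}$ with the eigendirection $v_1$ of $DH_\Delta(1,\sqrt2)$ and invoking Sternberg linearization to produce the invariant foliation $\{\eta_s\}$.

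The second discrepancy is the source of the multiplicity $2^{n-1}$. You build it up by a factor of $2$ per renormalization level; the paper instead harvests it all at the final step. After finitely many applications of $H_\Delta$ the parameter lands in a region $\mathcal{P}_{1,m+1}$ where $\Psi_{a,b}^2$, restricted to an invariant quadrilateral $\mathcal{R}_{a,b}$, is affinely conjugate to the direct product $\Gamma_{a^2}=\lambda_{a^2}\times\lambda_{a^2}$ of one-dimensional tent maps (Lemmas \ref{leminvdelta} and \ref{lemconjdelta}). The $2^m$ coexisting strange attractors then come from the classical renormalization of the one-dimensional tent map: the $2^m$ rectangles $J_{\mu,0}\times J_{\mu,k}$ cyclically permuted by $\Gamma_\mu$ (Lemma \ref{lemgammat}), with the strangeness supplied by Buzzi's absolutely continuous invariant measure rather than by adapting \cite{PuRoTaVi2} as in your base case. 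Without the passage to the two-parameter family and the product structure, your induction has no base to stand on and no mechanism to propagate.
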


The proofs of these conjectures are again strongly based on the notion of renormalization. From \cite{PuRoVi} we know that the renormalization of $\Lambda_{t} $ leads to an \textit{EBM} with two folds. To be more precise, these two folds take place, respectively, along the lines
\begin{eqnarray*}
\mathcal{C} & = & \left\{ (x,y)\in \mathcal{T}: \; x=1 \right\} \\
\mathcal{L}(b) & = & \left\{ (x,y)\in \mathcal{T}_{0}: \; x+y=b \right\}.
\end{eqnarray*}
Therefore, we shall be mainly interested in the study of the two-parameter family of \textit{EBMs}
\begin{equation}\label{defF}
\mathbb{F}=\left\{\Psi_{a,b}=EBM(\mathcal{T},\mathcal{C},\mathcal{L}(b),\mathcal{O},B_{a}) \right\}
\end{equation}
where
\begin{equation}\label{defBa}
B_{a}=
\left(
\begin{array}{cc}
 a & 0 \\
 0 & a
\end{array}
\right),
\end{equation}
defines expansive linear maps with fixed point in $\mathcal{O}$. Let
$\mathcal{P}$ denote the set of parameters given by
\begin{equation}\label{defP}
\mathcal{P}=\left\{ (a,b)\in (1,2] \times [1,2]: \; ab\leq 2 \right\}.
\end{equation}
The choice of $\mathcal{P}$ guarantees that $\mathcal{T}$ is invariant by $\Psi_{a,b}$ for every $(a,b)\in \mathcal{P}$. See Section 2 for more details of the definition of $\Psi_{a,b}$.

To begin with, we shall prove the existence of two regions $\mathcal{P}_{j}$, $j=1,2$ of $\mathcal{P}$ in which $\Psi_{a,b}$ has an invariant rectangle $\mathcal{R}_{a,b}$ and, in addition, for each $(a,b)\in \mathcal{P}_{j}$,  $\Psi_{a,b}^2$ is conjugate on $\mathcal{R}_{a,b}$ to a direct product of tent maps. As a consequence, it follows that the maps $\Psi_{a,b}$ exhibit strange attractors on $\mathcal{R}_{a,b}$.

Later, we shall find a third region $\mathcal{P}_{3}$ contained in $\mathcal{P}$ such that for every $(a,b)\in \mathcal{P}_{3}$ the map $\Psi_{a,b}$ can be renormalized on two different restrictive domains $\Delta_{a,b}$ and $\Pi_{a,b}$ at the same time. In this way, we can define two different renormalization operators $H_{\Delta}$ and $H_{\Pi}$ and prove the following result.

\begin{mtema}\label{tmaa}
For every $(a,b)\in\mathcal{P}_{3}$ the map $\Psi_{a,b}$ is simultaneously renormalizable in $\mathbb{F}$ on two different restrictive domains. Namely:
\begin{itemize}
\item[i)] The restriction of $\Psi_{a,b}^{4}$ to $\Delta_{a,b}$ is conjugate by means of an affine change in coordinates to $\Psi_{H_{\Delta}(a,b)}$ restricted to $\Psi_{H_{\Delta}(a,b)}(\mathcal{T})$.
\item[ii)] The restriction of $\Psi_{a,b}^{4}$ to $\Pi_{a,b}$ is conjugate by means of an affine change in coordinates to $\Psi_{H_{\Pi}(a,b)}$.
\end{itemize}
\end{mtema}
   
The operators $H_{\Delta}$ and $H_{\Pi}$ satisfy fruitful properties used along the rest of the paper. In particular, $\mathcal{P}_{1}\subset H_{\Delta}(\mathcal{P}_{3})$ and $\mathcal{P}_{2}\subset H_{\Pi}(\mathcal{P}_{3})$. Moreover, both maps $H_{\Delta}$ and $H_{\Pi}$ has a repelling fixed point at $(1,\sqrt{2})$.

The maps $\Lambda_t$ defined in (\ref{Lambda}) satisfy, according to \cite{PuRoVi}, that $ \Lambda_{t}^{8} = \Psi_{a,b}$ for every $ t \in [\frac{1}{\sqrt{2}},\frac{1}{\sqrt[5]{4}}] $ with $ a=16t^{8} $ and $ b=1/2t^{3} .$ Notice that
\begin{equation}\label{defgam0}
\gamma_{0}=\left\{ \left(16t^{8},\frac{1}{2t^{3}} \right): \; t \in \left[\frac{1}{\sqrt{2}},\frac{1}{\sqrt[5]{4}} \right] \right\}
\end{equation}
is a curve contained in $\mathcal{P}$ starting at $(1,\sqrt{2})$. By using the relative positions between the tangent vector to $\gamma_{0}$ at $(1,\sqrt{2})$ and the eigenvectors of $DH_{\Delta}$ at this point, one concludes the existence of $n_{0}\in\mathbb{N}$ such that $ H_{\Delta}^{n}(\gamma_{0})\cap \mathcal{P}_{1}\neq \emptyset ,$ for every $n>n_{0}$. This is the main argument used in the proof of the following result.

\begin{mtema}\label{tmab} 
It holds that:
\begin{itemize}
\item[i)] There exists a decreasing sequence $\left\{ t_{n} \right\}_{n\in\mathbb{N}}$ converging to $\frac{1}{\sqrt{2}}$, such that $\Lambda_{t}$ is a $n$ times renormalizable \textit{EBM} for every $t\in (\frac{1}{\sqrt{2}},t_{n})$.
\item[ii)] For each natural number $n$ there exists an interval $ I_{n} $ such that $\Lambda_{t} $ displays, at least, $2^{n}$ different strange attractors whenever $t\in I_n.$
\end{itemize}
\end{mtema}

It is clear that the first statement of this theorem gives an affirmative answer to Conjecture \ref{cona}. Renormalizations are defined on bounded domains with nonempty interior. Infinitely many renormalizations of \textit{EBMs} can not be made on a domain with nonempty interior. Thus, if $ t > \frac{1}{\sqrt{2}} ,$ then $ \Lambda_t $ can not be an infinitely many renormalizable map. Hence, Conjecture \ref{conb} follows.  Finally, the second statement of Theorem \ref{tmab} proves a slightly weaker version of Conjecture \ref{conc}. In fact, it remains as an open problem to show that the interval of parameters $I_n$ in which the existence of at least $2^n$ strange attractors is proved can be constructed inside the set of parameters where, according to the first statement of Theorem \ref{tmab}, the map $\Lambda_t$ is a n times renormalizable map. Although we think this stronger result is also true, we remark that the most important dynamical property, i.e. the coexistence of any arbitrarily large number of persistent (in an interval of parameters) different strange attractors is demonstrated along this paper.

The paper is organized as follows. In Section \ref{seccoe} the subsets of parameters $\mathcal{P}_{1}$ and $\mathcal{P}_{2}$ are constructed and the coexistence of strange attractors for such parameters is proved. In Section \ref{secren} the proof of Theorem \ref{tmaa} is given and useful properties of the renormalization operators are stated. Finally, Section \ref{sectmab} is devoted to demonstrate Theorem \ref{tmab}. 


\section{Regions of coexistence of strange attractors}\label{seccoe} \

In order to find regions of parameters for which the map $ \Psi_{a,b} $ displays coexistence of strange attractors we shall look for values of the parameter $ (a,b) $ such that $ \Psi_{a,b} $ may be conjugate to a direct product of one-dimensional tent maps.


\subsection{Direct product of one-dimensional tent maps}

Let us recall the family of one-dimensional tent maps $ \{ \lambda_{\mu} \}_{\mu \in [0,2]} $ given by
\begin{equation*}
\lambda_{\mu} : x \in [0,2] \mapsto \lambda_{\mu}(x)=
\left\{
\begin{array}{ll}
\mu x & \mbox{, if } x \in [0,1] \\
\mu (2-x) & \mbox{, if } x \in [1,2] .
\end{array}
\right.
\end{equation*}
It is known that the interval $ I_{\mu}=[\mu(2-\mu),\mu] $ is an invariant set of $\lambda_{\mu} $ for every $ \mu \in (1,2] .$ Moreover, $ I_{\mu} $ is a strange attractor for every  $ \mu \in (\sqrt{2},2] $ and strange attractors with several pieces may be also obtained for values of the parameter $ \mu \in (1,\sqrt{2}] $ by using renormalization techniques. Furthermore, these strange attractors are strongly topologically mixing according to the next definition.

\begin{definition}
Let $ f: \mathcal{M} \rightarrow \mathcal{M} $ be a transformation, $\mathcal{M} \subset \mathbb{R}^N .$ Then, $ f $ is said to be \textbf{strongly topologically mixing} in an invariant set $ \mathcal{A} $ if for any open subset $ \mathcal{U} \subset \mathcal{A} $ there is some natural number $ k $ such that $ f^k(\mathcal{U}) = \mathcal{A} .$
\end{definition}

From now on, we shall denote by $ \Gamma_{\mu} $ the two dimensional map defined by 
\begin{equation}\label{defgamt}
\Gamma_{\mu} : (x,y) \in [0,2] \times [0,2] \mapsto \Gamma_{\mu} (x,y) = ( \lambda_{\mu} (x) , \lambda_{\mu} (y) ) .
\end{equation}

\begin{lemma}\label{lemgammat}
The following statements hold:
\begin{itemize}
\item[a)] For every $ \mu \in (\sqrt{2},2] $ the map $ \Gamma_{\mu} $ has a strongly topologically mixing strange attractor with two positive Lyapounov exponents. 
\item[b)] For every natural number $ n $ and for every $ \mu \in (2^{\frac{1}{2^{n+1}}},2^{\frac{1}{2^n}}] $ the map $ \Gamma_{\mu} $ is $ n $ times renormalizable and displays $ 2^n $ strange attractors with two positive Lyapounov exponents.
\end{itemize}
\end{lemma}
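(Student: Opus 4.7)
The plan is to exploit the product structure $\Gamma_\mu = \lambda_\mu \times \lambda_\mu$ at every step, reducing both statements to the classical one-dimensional theory of tent maps.

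For part (a), with $\mu \in (\sqrt{2}, 2]$, I would take $I_\mu \times I_\mu$ as the candidate attractor. It is compact and $\Gamma_\mu$-invariant because $\lambda_\mu(I_\mu) = I_\mu$, and its stable set has nonempty interior since it contains the open square $(\mu(2-\mu), \mu)^2$. Because $D\Gamma_\mu$ is diagonal with entries $\pm\mu$ off the fold set, one has $\|D\Gamma_\mu^n(z)\| = \mu^n$ on every orbit avoiding the critical set, giving two positive Lyapunov exponents equal to $\log\mu$. The only nontrivial item is strong topological mixing: given an open $U\subset I_\mu\times I_\mu$, choose a rectangle $U_1\times U_2\subset U$; by the classical mixing property of $\lambda_\mu|_{I_\mu}$ for $\mu>\sqrt{2}$, there exist $k_1,k_2$ with $\lambda_\mu^{k_i}(U_i) = I_\mu$, and then $k=\max(k_1,k_2)$ satisfies $\Gamma_\mu^k(U_1\times U_2) = I_\mu\times I_\mu$ because $\lambda_\mu(I_\mu)=I_\mu$. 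Transitivity and the existence of a dense orbit follow from strong mixing in the standard way.

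For part (b), write $\mu_n = 2^{1/2^n}$ and fix $\mu\in (\mu_{n+1},\mu_n]$. The one-dimensional renormalization cascade provides a cycle of $2^n$ pairwise disjoint closed subintervals $J_1,\ldots,J_{2^n}\subset [0,2]$, cyclically permuted by $\lambda_\mu$ through a single $2^n$-cycle $\sigma$, such that $\lambda_\mu^{2^n}|_{J_i}$ is affinely conjugate to $\lambda_{\mu^{2^n}}$ on $[0,2]$, with $\mu^{2^n}\in (\sqrt{2},2]$. Taking the Cartesian product, each of the $4^n$ rectangles $J_i\times J_j$ is a restrictive domain for $\Gamma_\mu$ whose $2^n$-th return map is affinely conjugate to $\Gamma_{\mu^{2^n}}$ on $[0,2]^2$, to which part (a) applies. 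Iterating this observation yields that $\Gamma_\mu$ is $n$ times renormalizable. To count the attractors I would analyze the diagonal action $(i,j)\mapsto (\sigma i,\sigma j)$ on $\{1,\ldots,2^n\}^2$: since $\sigma$ is a single $2^n$-cycle, $\sigma^k i=i$ forces $2^n\mid k$, so every orbit has length exactly $2^n$, producing $4^n/2^n = 2^n$ distinct orbits. For each orbit $\mathcal{O}$ the union $\bigcup_{(i,j)\in\mathcal{O}} A_{ij}$, where $A_{ij}$ is the strange attractor of $\Gamma_\mu^{2^n}|_{J_i\times J_j}$ supplied by part (a), is $\Gamma_\mu$-invariant, transitive (combining transitivity on each block under $\Gamma_\mu^{2^n}$ with the cyclic permutation of the blocks), and inherits a nonempty-interior stable set and the uniform derivative bound $\mu^k$ from the building blocks. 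Disjointness of distinct orbits then yields $2^n$ coexisting strange attractors.

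The main obstacle I expect is the bookkeeping in part (b): one must check that the diagonal cyclic action really has orbits of full length $2^n$ (as opposed to proper divisors, which would spoil the count) and that the affine conjugation between $\Gamma_\mu^{2^n}|_{J_i\times J_j}$ and $\Gamma_{\mu^{2^n}}$ on $[0,2]^2$ survives the orientation-reversing pieces of the tent map, so that the expected expansion rate and the strong-mixing-induced transitivity transfer cleanly from the model on $[0,2]^2$ to each of the $2^n$ resulting attractors of $\Gamma_\mu$.
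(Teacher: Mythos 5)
Your proposal is correct and follows essentially the same route as the paper: both reduce everything to the one-dimensional tent-map theory via the product structure, take $I_{\mu}\times I_{\mu}$ as the attractor in (a), and in (b) obtain the $2^{n}$ attractors as the $\Gamma_{\mu}$-orbits of products of the renormalization intervals (the paper's representatives $\mathcal{S}_{\mu,k}=J_{\mu,0}\times J_{\mu,k}$, $k=0,\ldots,2^{n}-1$, are precisely one point from each orbit of your diagonal action, so the counts agree). The only divergence is in how (a) is finished: you argue mixing, transitivity and the two positive Lyapounov exponents directly from the diagonal derivative, whereas the paper invokes Buzzi's theorem \cite{Buz2} on the unique absolutely continuous ergodic measure (equal to $\nu\times\nu$ and fully supported on $\mathcal{S}_{\mu}$) together with Proposition 3 of \cite{PuTa1}; your elementary argument is fine provided you add the (standard) remark that a dense orbit avoiding the countable union of fold lines can indeed be chosen.
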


\begin{proof}
It is clear that 
\begin{equation}\label{defst}
\mathcal{S}_{\mu} = I_{\mu} \times I_{\mu} = [\mu(2-\mu),\mu] \times [\mu(2-\mu),\mu]
\end{equation}
is invariant by $ \Gamma_{\mu} .$ Moreover, since $ \lambda_{\mu} $ is strongly topologically mixing on $ I_{\mu} $ it is easy to deduce that $ \Gamma_{\mu} $ is strongly topologically mixing on $ \mathcal{S}_{\mu} .$ 

According to \cite{Buz2} the map $ \Gamma_{\mu} $ has a unique absolutely continuous and ergodic measure $ \widetilde{\nu} $ with support equal to $ \mathcal{S}_{\mu} .$ This measure $ \widetilde{\nu} $ coincides with $ \nu \times \nu ,$ where $ \nu $ is the unique absolutely continuous and ergodic measure of $ \lambda_{\mu} .$ The rest of the proof of the first statement follows in the same way as Proposition 3 in \cite{PuTa1}.

Now, if $ \mu \in (2^{\frac{1}{2^{n+1}}},2^{\frac{1}{2^n}}] $ it is known that there exists a restrictive domain $ J_{\mu} $ such that $ \lambda_{\mu}^{2^n} $ restricted to $ J_{\mu} $ is conjugate to $ \lambda_{\mu^{2^n}} $ (see \cite{BrBr}, \cite{MeSt} or \cite{Mel} for related details). Therefore, $ \lambda_{\mu}^{2^n} $ displays a strongly topologically mixing strange attractor $ J_{\mu,0} \subset J_{\mu} .$ Hence, $ \mathcal{S}_{\mu,0} = J_{\mu,0} \times J_{\mu,0} $ is a strongly topologically mixing strange attractor for $ \Gamma_{\mu}^{2^n} $ and, defining $ J_{\mu,k}=\lambda_{\mu}^k(J_{\mu,0}) $ for $ k = 0 \ldots 2^n-1 ,$ the same holds for every $ \mathcal{S}_{\mu,k} = J_{\mu,0} \times J_{\mu,k}. $ Then, it is easy to see that
\begin{equation*}
\widetilde{\mathcal{S}}_{\mu,k} = \bigcup_{j=1}^{2^n} \Gamma_{\mu}^j(\mathcal{S}_{\mu,k}), 
\end{equation*}
$ k = 0 \ldots 2^n-1 ,$ are $ 2^n $ different strange attractors for $ \Gamma_{\mu} .$
\end{proof}

\begin{remark}\label{rematm2d}
Let us note that for every $ \mu \in (1,2] $ the map $ \Gamma_{\mu} $ can be written as the \textit{EBM} with two folds $ \Gamma_{\mu}=EBM(\mathcal{Q},\mathcal{L}_1,\mathcal{L}_2,\mathcal{O},B_{\mu}) ,$ see also \eqref{defF}, where 
\begin{eqnarray*}
\mathcal{Q} & = & [0,2] \times [0,2] \; , \\[1.5ex]
\mathcal{L}_1 & = & \{(x,y) \in \mathcal{Q} : \ x=1 \} \; , \\[1.5ex]
\mathcal{L}_2 & = & \{(x,y) \in \mathcal{Q} : \ x \leq 1 \; , \; y=1 \} \; , \\[1.5ex]
B_{\mu} & = & \left( 
\begin{array}{cc} 
\mu & 0 \\
0 & \mu
\end{array}
\right) \; .
\end{eqnarray*}
\end{remark}
\begin{figure}[!ht]
\begin{minipage}{1 \linewidth}
\begin{center}
{\small
\begin{tikzpicture}[xscale=1,yscale=1]

\fill [mycolor] (0.0,0.0) -- (2.0,0.0) -- (2.0,2.0) -- (0.0,2.0) -- (0.0,0.0); 
\draw [darkgray,thick] (0.0,0.0) -- (2.0,0.0) -- (2.0,2.0) -- (0.0,2.0) -- (0.0,0.0); 
\filldraw [black] (0.0,0.0) circle (1pt);
\node [black] at (0.0,-0.3) {$ \mathcal{O} $}; 

\draw [darkgray,thick,dashed] (1.0,0.0) -- (1.0,2.0);
\draw [darkgray,thick,dashed] (0.0,1.0) -- (1.0,1.0); 

\node [black] at (1.3,1.5) { $ \mathcal{L}_1 $ };
\node [black] at (0.5,0.7) { $ \mathcal{L}_2 $ };

\node [black] at (2.5,1.3) { $ \mathcal{F}_{\mathcal{L}_1} $ };
\draw[black, -latex', line width=3pt] (2.2,1.0) -- (2.8,1.0);

\draw [darkgray,thick] (3.0,0.0) -- (5.0,0.0) -- (5.0,2.0) -- (3.0,2.0) -- (3.0,0.0); 
\fill [mycolor] (3.0,0.0) -- (4.0,0.0) -- (4.0,2.0) -- (3.0,2.0) -- (3.0,0.0); 
\draw [darkgray,thick] (3.0,0.0) -- (4.0,0.0) -- (4.0,2.0) -- (3.0,2.0) -- (3.0,0.0); 
\filldraw [black] (3.0,0.0) circle (1pt);
\node [black] at (3.0,-0.3) {$ \mathcal{O} $}; 

\draw [darkgray,thick,dashed] (3.0,1.0) -- (4.0,1.0); 

\node [black] at (3.5,0.7) { $ \mathcal{L}_2 $ };

\node [black] at (5.5,1.3) { $ \mathcal{F}_{\mathcal{L}_2} $ };
\draw[black, -latex', line width=3pt] (5.2,1.0) -- (5.8,1.0);

\draw [darkgray,thick] (6.0,0.0) -- (8.0,0.0) -- (8.0,2.0) -- (6.0,2.0) -- (6.0,0.0); 
\fill [mycolor] (6.0,0.0) -- (7.0,0.0) -- (7.0,1.0) -- (6.0,1.0) -- (6.0,0.0); 
\draw [darkgray,thick] (6.0,0.0) -- (7.0,0.0) -- (7.0,1.0) -- (6.0,1.0) -- (6.0,0.0); 
\filldraw [black] (6.0,0.0) circle (1pt);
\node [black] at (6.0,-0.3) {$ \mathcal{O} $}; 

\node [black] at (8.5,1.3) { $ B_{\mu} $ };
\draw[black, -latex', line width=3pt] (8.2,1.0) -- (8.8,1.0);

\draw [darkgray,thick] (9.0,0.0) -- (11.0,0.0) -- (11.0,2.0) -- (9.0,2.0) -- (9.0,0.0); 
\fill [mycolor] (9.0,0.0) -- (10.5,0.0) -- (10.5,1.5) -- (9.0,1.5) -- (9.0,0.0); 
\draw [darkgray,thick] (9.0,0.0) -- (10.5,0.0) -- (10.5,1.5) -- (9.0,1.5) -- (9.0,0.0); 
\filldraw [black] (9.0,0.0) circle (1pt);
\node [black] at (9.0,-0.3) {$ \mathcal{O} $};

\filldraw [black] (10.5,1.5) circle (1pt);
\node [black] at (10.5,1.75) {$ (\mu,\mu) $};

\end{tikzpicture} }
\end{center}
\caption{Dynamics of $ \Gamma_{\mu} .$}
\label{figdyngammat}
\end{minipage}
\end{figure}
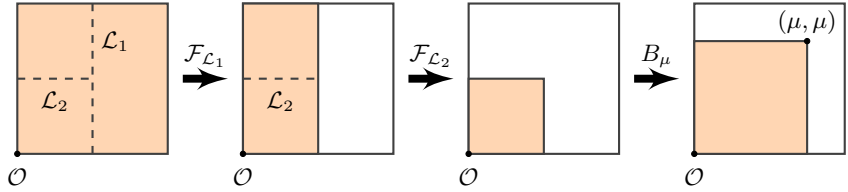


\subsection{The two-parameter family of \textit{EBMs}}

From now on, $ \Omega(Q^1,Q^2,\dots,Q^n)$ denotes the polygonal set in $ \mathbb{R}^2 $ with consecutive vertices $Q^1$, $Q^2$,\dots, $Q^n.$ Let us recall from \cite{PuRoVi} the two-parameter family of \textit{EBMs}
\begin{equation*}
\mathbb{F} = \{ \Psi_{a,b} = EBM(\mathcal{T},\mathcal{C},\mathcal{L}(b),\mathcal{O},B_a)\} : (a,b) \in \mathcal{P} \}
\end{equation*}
where 
\begin{eqnarray*}
\mathcal{T} & = & \Omega(\mathcal{O},(1,1),(2,0)) \; , \\[1.5ex]
\mathcal{C} & = & \{(x,y) \in \mathcal{T} : \; x=1 \} \; , \\[1.5ex]
\mathcal{L}(b) & = & \{(x,y) \in \mathcal{T} : \;  x \leq 1 \mbox{ and } x+y=b \} \; , \\[1.5ex]
\mathcal{O} & = & (0,0) \; , \\[1.5ex]
B_a & = & \left( 
\begin{array}{cc} 
a & 0 \\
0 & a
\end{array}
\right) \; ,
\end{eqnarray*}
and $ \mathcal{P} $ (see \eqref{defP}) denotes the set of parameters given by
\begin{equation*}
\mathcal{P} = \{ (a,b) \in (1,2] \times [1,2] : \ ab \leq 2  \}.
\end{equation*} 
The choice of $ \mathcal{P} $ guarantees that $ \mathcal{T} $ is invariant by $ \Psi_{a,b} $ for every $ (a,b) \in \mathcal{P} .$ 

We shall also consider
\begin{eqnarray*}
\mathcal{T}_{0} & = & \left\{ (x,y) \in \mathcal{T} : \; 0 \leq x \leq 1 \right\} \; , \\[1.5ex] 
\mathcal{T}_{1} & = & \left\{ (x,y) \in \mathcal{T} : \; 1 \leq x \leq 2 \right\} \; , \\[1.5ex]
\mathcal{L}^{\prime}(b) & = & \{(x,y) \in \mathcal{T}_1 : \ x-y=2-b \}.
\end{eqnarray*}
Then, we can split $ \mathcal{T} $ in four domains, see Figure \ref{figpab},
\begin{eqnarray*}
\mathcal{T}_0^- & = & \{ (x,y) \in \mathcal{T}_0 : \ x+y \leq b \} \; , \\
\mathcal{T}_0^+ & = & \{ (x,y) \in \mathcal{T}_0 : \ x+y \geq b \} \; , \\
\mathcal{T}_1^- & = & \{ (x,y) \in \mathcal{T}_1 : \ x-y \geq 2-b \} \; , \\
\mathcal{T}_1^+ & = & \{ (x,y) \in \mathcal{T}_1 : \ x-y \leq 2-b \} 
\end{eqnarray*}
in such a way that each map $ \Psi_{a,b} $ in $ \mathbb{F} $ is defined by 
\begin{equation}\label{defpsiab}
\Psi_{a,b}(x,y) = \left\{
\begin{array}{ll}
\left( ax , ay \right) 				& \mbox{, if } (x,y) \in \mathcal{T}_0^- \\
\left( a(b-y) , a(b-x) \right) 		& \mbox{, if } (x,y) \in \mathcal{T}_0^+ \\
\left( a(2-x) , ay \right) 			& \mbox{, if } (x,y) \in \mathcal{T}_1^- \\
\left( a(b-y) , a(b-2+x) \right) 	& \mbox{, if } (x,y) \in \mathcal{T}_1^+ \; .
\end{array}
\right.
\end{equation}

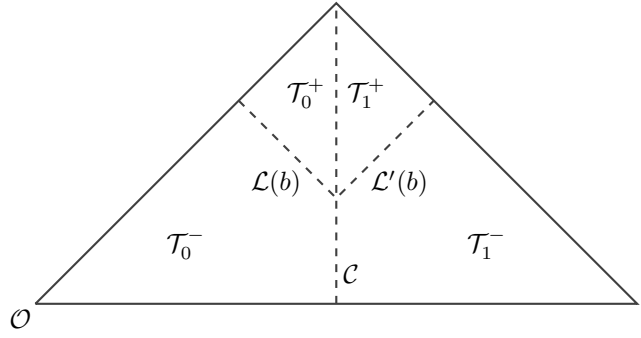
\begin{figure}[!ht]
\begin{minipage}{1 \linewidth}
\centering
\begin{tikzpicture}[xscale=4,yscale=4]
\draw [darkgray,thick] (0,0) -- (2,0) -- (1,1) -- (0,0);
\draw [darkgray,thick,dashed] (1,0) -- (1,1);
\draw [darkgray,thick,dashed] (0.6757,0.6757) -- (1,0.3514) -- (1.3243,0.6757);

\node[black] at (-0.05,-0.05) {$\mathcal{O} $};
\node[black] at (0.8,0.40) {$\mathcal{L}(b) $};
\node[black] at (1.21,0.40) {$\mathcal{L}^{\prime}(b) $};
\node[black] at (1.05,0.1) {$\mathcal{C}$};
\node[black] at (0.5,0.2) {$\mathcal{T}_0^-$};
\node[black] at (0.9,0.7) {$\mathcal{T}_0^+$};
\node[black] at (1.5,0.2) {$\mathcal{T}_1^-$};
\node[black] at (1.1,0.7) {$\mathcal{T}_1^+$};
\end{tikzpicture}
\caption{The smoothness domains for a map in $ \mathbb{F} .$}
\label{figpab}
\end{minipage}
\end{figure}

As we have seen in \cite{PuRoVi}, Lemma 4.4, for every $ \Psi_{a,b} $ in $ \mathbb{F} $ there exists a unique fixed point $ P_{a,b} $ in $ int(\mathcal{T}) $ given by 
\begin{equation}\label{defPab}
P_{a,b} = (x_{a,b},y_{a,b}) = \left( \frac{ab+2a^2-a^2b}{1+a^2} , \frac{ab-2a+a^2b}{1+a^2} \right).
\end{equation}

Let us now define the points
\begin{eqnarray*}
C & = & (2-a-b+ab,a(b-1)) \; , \\ 
D & = & (1,a(b-1)) \; , \\
E & = & (1,ab-1) \; , \\ 
F & = & \left(\frac{1}{2}(2-b+ab),\frac{1}{2}(-2+b+ab)\right).
\end{eqnarray*}
As it has been proved in \cite{PuRoVi}, Section 4, for every point $ (x,y) $ in the interior of $ \mathcal{T} $ there exists a natural number $ n $ such that $ \Psi_{a,b}^n(x,y) $ belongs to certain invariant domain $ \mathcal{R}_1 $ given by, see Figure \ref{figR1},
\begin{equation}\label{defR1}
\mathcal{R}_1=\Omega(C_1,D_1,E_1,F_1)
\end{equation}
where
\begin{eqnarray*}
C_1 & = & \Psi_{a,b}(C) = (a(a+b-ab),a^{2}(b-1)) \; , \\ 
D_1 & = & \Psi_{a,b}(D) = (a(a+b-ab),a(b-1)) \; , \\
E_1 & = & \Psi_{a,b}(E) = (a(1+b-ab),a(b-1)) \; , \\ 
F_1 & = & \Psi_{a,b}(F) = \left(\frac{a}{2}(2+b-ab),\frac{a}{2}(-2+b+ab)\right).
\end{eqnarray*}


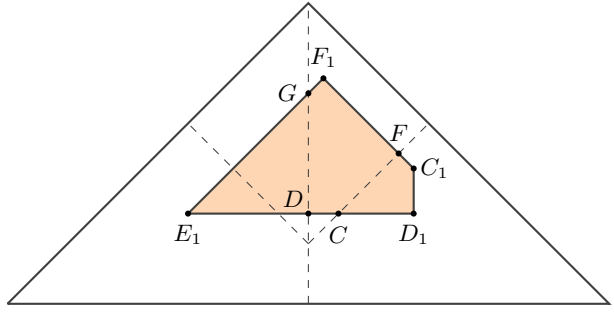
\begin{figure}[!ht]
\centering
\begin{minipage}{0.9 \linewidth}
\centering
{\small
\begin{tikzpicture}[xscale=4,yscale=4]

\fill[mycolor] (1.0500,0.7500) -- (1.3500,0.4500) -- (1.3500,0.3000) -- (0.6000,0.3000) ;
\draw[darkgray,thick] (1.0500,0.7500) -- (1.3500,0.4500) -- (1.3500,0.3000) -- (0.6000,0.3000) -- (1.0500,0.7500);

\draw[darkgray,thick] (0.0000,0.0000) -- (2.0000,0.0000) -- (1.0000,1.0000) -- (0.0000,0.0000); 
\draw[darkgray,dashed] (1.0000,0.0000) -- (1.0000,1.0000); 
\draw[darkgray,dashed] (1.0000,0.2000) -- (0.6000,0.6000); 
\draw[darkgray,dashed] (1.0000,0.2000) -- (1.4000,0.6000); 

\fill[black] (1.1000,0.3000) circle (0.10mm); 
\node[black] at  (1.1000,0.3000-0.07) {$ C $}; 
\fill[black] (1.3500,0.4500) circle (0.10mm); 
\node[black] at  (1.3500+0.07,0.4500) {$ C_1 $}; 
\fill[black] (1.0000,0.3000) circle (0.10mm); 
\node[black] at  (1.0000-0.05,0.3000+0.05) {$ D $}; 
\fill[black] (1.3500,0.3000) circle (0.10mm); 
\node[black] at  (1.3500,0.3000-0.07) {$ D_1 $}; 
\fill[black] (0.6000,0.3000) circle (0.10mm); 
\node[black] at  (0.6000,0.3000-0.07) {$ E_1 $}; 
\fill[black] (1.3000,0.5000) circle (0.10mm); 
\node[black] at  (1.3000,0.5000+0.07) {$ F $}; 
\fill[black] (1.0500,0.7500) circle (0.10mm); 
\node[black] at  (1.0500,0.7500+0.07) {$ F_1 $}; 
\fill[black] (1.0000,0.7000) circle (0.10mm); 
\node[black] at  (1.0000-0.07,0.7000) {$ G $}; 
\end{tikzpicture}
}
\caption{The set $ \mathcal{R}_1 .$}
\label{figR1}
\end{minipage}
\end{figure}


\subsection{The region $ \mathcal{P}_1 $}

From now on, given two different points $ Q^1 $ and $ Q^2 $ in $\mathbb{R}^2$ we shall denote by $\overline{Q^1Q^2}$ the straight segment joining $Q^1$ and $Q^2.$ 

Given $ (a,b) \in \mathcal{P} $ we may compute the image of $ \mathcal{R}_1 $ by $ \Psi_{a,b} .$ It is easy to check that $ F_1 $ belongs to $ \mathcal{T}_1^+ $ for every $ (a,b) \in \mathcal{P} .$ Then, 
\begin{equation*}
F_2=\Psi_{a,b}(F_1)=\left(\frac{a}{2}(2a+2b-ab-a^{2}b),\frac{a}{2}(-4+2a+2b+ab-a^{2}b) \right).
\end{equation*}

Let us define the family of maps $ \mathbb{F}_1 = \{ \Psi_{a,b} \in \mathbb{F} : \; (a,b) \in \mathcal{P}_1 \} $ where, see Figure \ref{figsetP1},
\begin{equation}\label{defFdelta}
\mathcal{P}_1 = \left\{ (a,b) \in \mathcal{P} : \; b \leq \frac{2+2a}{2a+a^2} \right\}.
\end{equation}
Then, $ (a,b) \in \mathcal{P}_1 $ if and only if $ F_2 $ belongs to $ \mathcal{T}_{1}^+ .$ In this case, it is easy to check that
\begin{equation}\label{defR2del}
\mathcal{R}_2=\Psi_{a,b}(\mathcal{R}_1)=\Omega(C_1,D_1,G_1,F_2,F_1) \subset \mathcal{T}_{1}
\end{equation}
is an invariant set, where $ G_1 = (a(-1 + 2a + b - a^2b),a(b-1)).$ The aim of this section is to prove that for every $ (a,b) \in \mathcal{P}_1 $ there exists certain domain where $ \Psi_{a,b}^2 $ may be conjugate to a direct product of two one-dimensional tent maps. 

\begin{figure}[!ht]
\centering
\begin{minipage}{1 \linewidth}
\centering
\includegraphics[width=0.75\textwidth]{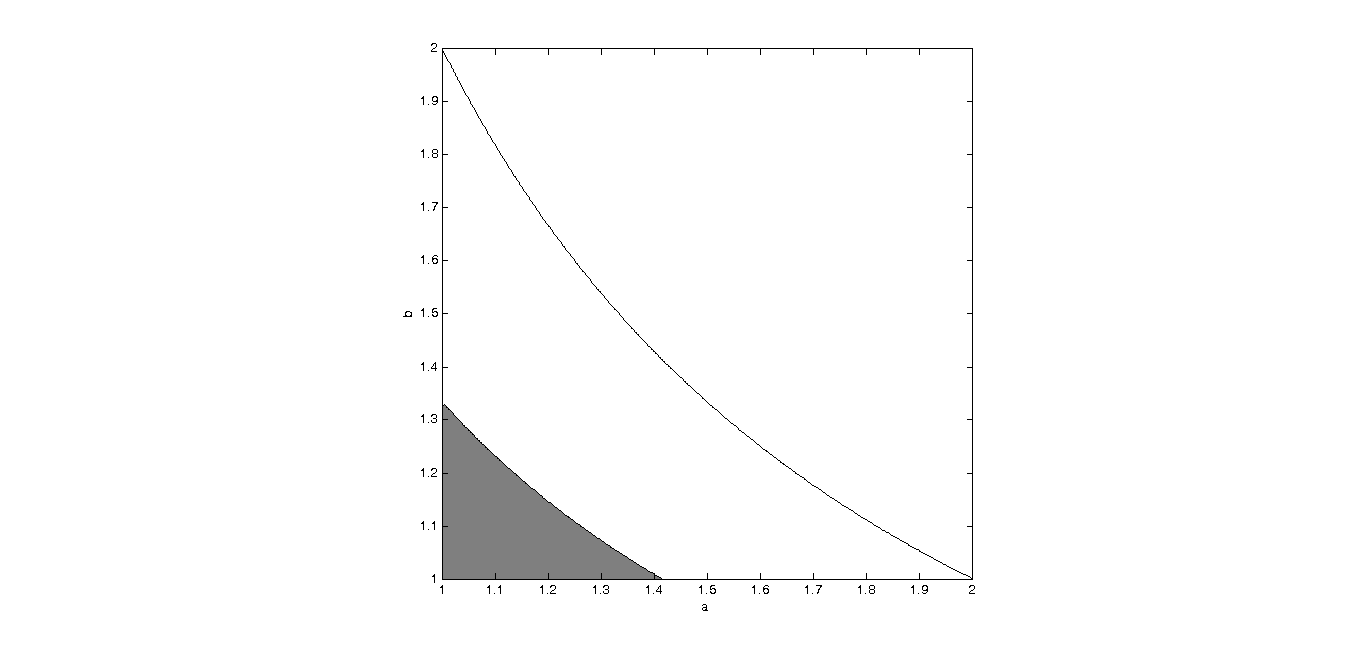}
\caption{The set of parameters $ \mathcal{P}_1 .$}
\label{figsetP1}
\end{minipage}
\end{figure}

Let us compute 
\begin{eqnarray}
F_3 = \Psi_{a,b}(F_2) & = & \left(\frac{a}{2}(4a-2a^2+2b-2ab-a^2b+a^3b) , \right. \hspace{3cm} \nonumber \\
& & \hspace{1cm} \left. \frac{a}{2}(-4+2a^2+2b+2ab-a^2b-a^3b) \right). \label{defF3}
\end{eqnarray}
It is easy to check that $ F_3 $ belongs to $ \mathcal{T}_1^- $ and we define the point 
\begin{equation*}
K = \left(\frac{1}{2}(2-2a+2a^2-b+2ab-a^3b),\frac{1}{2}(-2-2a+b+2a^2+2ab-a^3b) \right)
\end{equation*} 
as the intersection between $ \overline{F_2F_3} $ and $ \mathcal{L}^{\prime}(b) .$ In this situation, we consider the set, see Figure \ref{figRdelta},
\begin{equation}\label{defRdelta}
\mathcal{R}_{a,b} = \Omega(F_1,F_2,F_3,K_1)
\end{equation}
being \begin{eqnarray*}
K_1 & = & \Psi_{a,b}(K) = \\
& = & \left(\frac{a}{2}(2+2a-2a^2+b-2ab+a^3b),\frac{a}{2}(-2-2a+b+2a^2+2ab-a^3b) \right).
\end{eqnarray*}

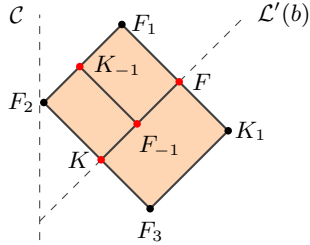
\begin{figure}[!ht]
\centering
\begin{minipage}{0.9 \linewidth}
\centering
{\small
\begin{tikzpicture}[xscale=10,yscale=10]

\fill[mycolor,thick] (1.1093,0.2868) -- (1.0055,0.1830) -- (1.1468,0.0417) -- (1.2506,0.1454); 
\draw[darkgray,thick] (1.1093,0.2868) -- (1.0055,0.1830) -- (1.1468,0.0417) -- (1.2506,0.1454) -- (1.1093,0.2868); 

\draw [darkgray,dashed] (1.0000,0.0000) -- (1.0000,0.3000);
\draw [darkgray,dashed] (1.0000,0.0251) -- (1.0817,0.1068);
\draw [darkgray,dashed] (1.1855,0.2106) -- (1.2749,0.3000);

\draw[darkgray,thick] (1.0817,0.1068) -- (1.1855,0.2106); 
\draw[darkgray,thick] (1.1295,0.1546) -- (1.0533,0.2308); 

\fill[red] (1.1295,0.1546) circle (0.05mm); 
\node[black] at  (1.1295+0.03,0.1546-0.03) {$ F_{-1} $}; 
\fill[red] (1.1855,0.2106) circle (0.05mm); 
\node[black] at  (1.1855+0.03,0.2106) {$ F $}; 
\fill[black] (1.1093,0.2868) circle (0.05mm); 
\node[black] at  (1.1093+0.03,0.2868) {$ F_1 $}; 
\fill[black] (1.0055,0.1830) circle (0.05mm); 
\node[black] at  (1.0055-0.03,0.1830) {$ F_2 $}; 
\fill[black] (1.1468,0.0417) circle (0.05mm); 
\node[black] at  (1.1468,0.0417-0.03) {$ F_3 $}; 
\fill[red] (1.0533,0.2308) circle (0.05mm); 
\node[black] at  (1.0533+0.05,0.2308) {$ K_{-1} $}; 
\fill[red] (1.0817,0.1068) circle (0.05mm); 
\node[black] at  (1.0817-0.03,0.1068) {$ K $}; 
\fill[black] (1.2506,0.1454) circle (0.05mm); 
\node[black] at  (1.2506+0.03,0.1454) {$ K_1 $};

\node[black] at  (1.0000-0.03,0.3000) {$ \mathcal{C} $};
\node[black] at  (1.2749+0.05,0.3000) {$ \mathcal{L}^{\prime}(b) $};
\end{tikzpicture}
}
\caption{The set $ \mathcal{R}_{a,b} .$}
\label{figRdelta}
\end{minipage}
\end{figure}

\begin{lemma}\label{leminvdelta}
For every $ (a,b) \in \mathcal{P}_1 $ the set $ \mathcal{R}_{a,b} $ defined in \eqref{defRdelta} is strictly invariant by $ \Psi_{a,b} .$ Moreover, $ \mathcal{R}_{a,b} $ captures the orbit of any point in the interior of $ \mathcal{T}:$ for every point $ (x,y) $ in the interior of $ \mathcal{T} $ there exists a natural number $ n $ such that $ \Psi_{a,b}^n(x,y) $ belongs to $ \mathcal{R}_{a,b}. $
\end{lemma}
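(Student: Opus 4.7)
The first step is to determine the shape of $\mathcal{R}_{a,b}$. A direct computation using the explicit coordinates of $F_1,F_2,F_3,K_1$ yields
\[
F_1^x-F_1^y=F_2^x-F_2^y=a(2-ab),\qquad F_3^x-F_3^y=K_1^x-K_1^y,
\]
\[
F_2^x+F_2^y=F_3^x+F_3^y,\qquad K_1^x+K_1^y=F_1^x+F_1^y=ab,
\]
so $\mathcal{R}_{a,b}$ is a parallelogram whose opposite sides lie on lines of the form $\{x-y=c\}$ and $\{x+y=d\}$. Since $b(a+1)>2$ on $\mathcal{P}$ we have $a(2-ab)<2-b$, while the assumption $F_3\in\mathcal{T}_1^-$ (a consequence of $(a,b)\in\mathcal{P}_1$) gives $2-b<F_3^x-F_3^y$. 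Therefore the fold line $\mathcal{L}^{\prime}(b):\{x-y=2-b\}$ crosses $\mathcal{R}_{a,b}$ along $\overline{FK}$ and splits it into two sub-parallelograms $A=\Omega(F_1,F_2,K,F)\subset\mathcal{T}_1^+$ and $B=\Omega(F,K,F_3,K_1)\subset\mathcal{T}_1^-$.

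On $A$ the map $\Psi_{a,b}$ is the affine branch $L_+(x,y)=(a(b-y),a(b-2+x))$. By construction $L_+(F_1)=F_2$ and $L_+(F_2)=F_3$; a short calculation with the explicit coordinates of $F$ and $K$ also gives $L_+(F)=F_1$ and $L_+(K)=K_1$. Since $L_+$ is affine it follows that $L_+(A)=\Omega(F_2,F_3,K_1,F_1)=\mathcal{R}_{a,b}$. On $B$ the relevant branch is $L_-(x,y)=(a(2-x),ay)$, and similar direct computation shows $L_-(F)=F_1$, $L_-(K)=K_1$, together with
\[
L_-(F_3)^x-L_-(F_3)^y=a\bigl(2-(F_3^x+F_3^y)\bigr)=F_3^x-F_3^y,
\]
\[
L_-(K_1)^x-L_-(K_1)^y=a\bigl(2-(K_1^x+K_1^y)\bigr)=a(2-ab),
\]
which places $L_-(F_3)$ on the line supporting edge $\overline{F_3K_1}$ and $L_-(K_1)$ on the line supporting $\overline{F_1F_2}$. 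The remaining check (that the $x+y$ coordinates sit inside the right intervals) reduces to the polynomial inequalities $(a-1)^2(a+1)(b(a+1)-2)\ge 0$ and $(a-1)(2-a^2)(b(a+1)-2)\ge 0$. Both hold on $\mathcal{P}_1$ because $b(a+1)>2$ and because the defining inequality $b\le\tfrac{2(1+a)}{a(2+a)}$ combined with $b\ge 1$ forces $a\le\sqrt{2}$. Thus $L_-(B)\subseteq\mathcal{R}_{a,b}$, and combining both pieces gives $\Psi_{a,b}(\mathcal{R}_{a,b})=L_+(A)\cup L_-(B)=\mathcal{R}_{a,b}$, which is the strict invariance.

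For the capture property I would use the known fact from \cite{PuRoVi} that $\mathcal{R}_1$ absorbs every orbit starting in $\mathrm{int}(\mathcal{T})$, and then show that a bounded number of additional iterations pushes $\mathcal{R}_2=\Psi_{a,b}(\mathcal{R}_1)$ into $\mathcal{R}_{a,b}$. The vertices of $\mathcal{R}_2=\Omega(C_1,D_1,G_1,F_2,F_1)$ that are not already in $\mathcal{R}_{a,b}$ are $C_1,D_1,G_1$; by locating each of these in the appropriate smoothness domain of $\Psi_{a,b}$ and computing its image one or two times, one checks $\Psi_{a,b}^{n_0}(\mathcal{R}_2)\subseteq\mathcal{R}_{a,b}$ for a small $n_0$. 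The invariance of $\mathcal{R}_{a,b}$ then closes the argument.

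The main obstacle is the algebraic verification in the invariance step: the $x-y$ identities are immediate, but establishing the two $x+y$ inequalities requires the factorizations above and depends crucially on the bound $a\le\sqrt{2}$ that is built into $\mathcal{P}_1$. The capture part is mostly bookkeeping but will involve a short case analysis depending on where $C_1,D_1,G_1$ sit with respect to the fold lines $\mathcal{C}$ and $\mathcal{L}^{\prime}(b)$ as $(a,b)$ varies over $\mathcal{P}_1$.
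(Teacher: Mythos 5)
Your treatment of the invariance is correct and is essentially the paper's argument in different packaging: the paper folds the piece $B=\Omega(F,K,F_3,K_1)$ across $\mathcal{L}^{\prime}(b)$ and reduces everything to the single check that $F_3$ lies on $\overline{KF_2^{\prime}}$, and the abscissa comparison behind that check is exactly your factorization $(a-1)(2-a^2)\bigl(b(a+1)-2\bigr)\geq 0$. You are in fact slightly more careful than the paper here, since you record explicitly that the bound $a\leq\sqrt{2}$ (forced on $\mathcal{P}_1$ by the compatibility of $b\geq 1$ with $b\leq\frac{2+2a}{2a+a^2}$) is genuinely used and not only $b(a+1)\geq 2$.

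The capture part, however, has a real gap. You propose to establish the uniform set containment $\Psi_{a,b}^{n_0}(\mathcal{R}_2)\subseteq\mathcal{R}_{a,b}$ for a small $n_0$ by iterating the vertices $C_1,D_1,G_1$ once or twice. This cannot work. The edge $\overline{D_1G_1}$ of $\mathcal{R}_2$ lies at height $y=a(b-1)$, and on $\mathcal{T}_1^-$ the map acts by $(x,y)\mapsto(a(2-x),ay)$, so a point of $\mathcal{R}_2\cap\mathcal{T}_1^-$ needs on the order of $\log_a\frac{1}{b-1}$ iterates before its $y$-coordinate forces it out of $\mathcal{T}_1^-$; this is unbounded over $\mathcal{P}_1$. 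Worse, for $b=1$ and $1<a<\sqrt{2}$ (parameters that do belong to $\mathcal{P}_1$) the edge $\overline{D_1G_1}$ sits on the invariant line $y=0$, while every vertex of $\mathcal{R}_{a,b}$ then has $y>0$ (e.g.\ $F_3^y=\frac{a}{2}(a-1)(2-a^2)>0$), so $\Psi_{a,b}^{n}(\mathcal{R}_2)\not\subseteq\mathcal{R}_{a,b}$ for every $n$. This does not contradict the lemma, which only claims capture of points of $int(\mathcal{T})$, but it kills the uniform statement you want to verify; and in any case a piecewise affine image of a polygon is not controlled by the images of three of its vertices. The statement to prove is pointwise, and the paper's proof supplies the two ingredients you are missing: first, the identity $\mathcal{R}_2\cap\mathcal{T}_1^+=\mathcal{R}_{a,b}\cap\mathcal{T}_1^+=\Omega(F,F_1,F_2,K)$, so that for an orbit confined to the invariant set $\mathcal{R}_2\subset\mathcal{T}_1$, ``leaving $\mathcal{T}_1^-$'' already means ``entering $\mathcal{R}_{a,b}$''; second, the observation that no orbit of a point of $\mathcal{R}_2\cap int(\mathcal{T})$ can remain in $\mathcal{T}_1^-$ forever, because its $y$-coordinate is multiplied by $a>1$ at each step. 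Together with the invariance of $\mathcal{R}_{a,b}$ already proved, this gives the capture property with a point-dependent (not uniform) number of iterates.
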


\begin{proof}
To prove that $ \mathcal{R}_{a,b} $ is strictly invariant we only need to check that $ F_3 $ belongs to the segment $ \overline{KF_2^{\prime}} ,$ where $ F_2^{\prime} $ is the symmetric point of $ F_2 $  with respect to $ \mathcal{L}^{\prime}(b) .$ 

Since $ K $ is the middle point of the segment $ \overline{F_2F_2^{\prime}},$ it is easy to check that the abscise of $ 
F_2^{\prime} $ is 
\begin{equation*} 
x_2^{\prime}=\frac{1}{2}(4-4a+2a^2+2ab-2b+a^2b-a^3b).
\end{equation*}

Finally, by comparing $ x_2^{\prime} $ to the abscise of $ F_3 $ (see \eqref{defF3}) one may check that $ F_3 $ belongs to the segment $ \overline{KF_2^{\prime}} $ whenever
\begin{equation*} 
2 \leq (1+a)b .
\end{equation*}
Notice that this inequality holds for every $ (a,b) \in \mathcal{P}_1 .$

Now, let us note that from Section 4.1 in \cite{PuRoVi}, $ \mathcal{R}_1 $ captures the orbit of every point in $ int(\mathcal{T}) $ and therefore the same holds for $ \mathcal{R}_2 $ (see \eqref{defR2del}). On the other hand, $ \mathcal{R}_{a,b} \cap \mathcal{T}_1^+ = \mathcal{R}_2 \cap \mathcal{T}_1^+ = \Omega(F,F_1,F_2,K). $ In order to prove that $ \mathcal{R}_{a,b} $ captures the orbit of any point of $ \mathcal{T} ,$ it is enough to check that for every point $ (x,y) \in \mathcal{R}_2 \cap \mathcal{T}_1^- $ there exists a natural number such that $ \Psi_{a,b}^n (x,y) \in \Omega(F,F_1,F_2,K). $ This last claim immediately follows from the fact that, see \eqref{defpsiab}, there is no orbit contained in $ \mathcal{T}_1^- $ and also that $ \mathcal{R}_2 $ is invariant.
\end{proof}

\begin{lemma}\label{lemconjdelta}
For every $ (a,b) \in \mathcal{P}_1$ there exists an affine change in coordinates such that $ \Psi_{a,b}^2 $ restricted to $ \mathcal{R}_{a,b} $ transforms into the map $ \Gamma_{a^2} $ restricted to $ \mathcal{S}_{a^2} $ (see \eqref{defgamt} and \eqref{defst}).
\end{lemma}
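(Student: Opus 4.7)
The plan is to pass to diagonal coordinates $(u, v) := (x - y, x + y)$, in which $\mathcal{R}_{a,b}$ becomes an axis-aligned rectangle $[U_1, U_2] \times [V_1, V_2]$; the four vertices $F_1, F_2, F_3, K_1$ fall on its four corners because, as one reads from the coordinates given in the setup of the previous lemma, $F_2 - F_1$ and $F_3 - K_1$ are scalar multiples of $(1,1)$, while $K_1 - F_1$ and $F_3 - F_2$ are scalar multiples of $(1,-1)$. Under this linear change of variables the fold $\mathcal{L}^{\prime}(b): x - y = 2 - b$ becomes the vertical line $u = 2 - b$, while $\mathcal{C}: x = 1$ becomes $u + v = 2$ and does not meet the interior of $\mathcal{R}_{a,b}$ (the leftmost vertex $F_2$ already satisfies $x \geq 1$, which is exactly the defining inequality of $\mathcal{P}_1$). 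One checks $U_1 < 2-b < U_2$, so only $\mathcal{L}^{\prime}(b)$ cuts $\mathcal{R}_{a,b}$; and the two branches of $\Psi_{a,b}$ on $\mathcal{R}_{a,b}$ read, in $(u,v)$ coordinates, as $(u, v) \mapsto (a(2 - v), a(2b - 2 + u))$ on the $\mathcal{T}_1^+$-piece and $(u, v) \mapsto (a(2 - v), a(2 - u))$ on the $\mathcal{T}_1^-$-piece.

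Next I would identify the four smoothness pieces of $\Psi_{a,b}^2|_{\mathcal{R}_{a,b}}$. By Lemma \ref{leminvdelta} each branch sends its piece back into $\mathcal{R}_{a,b}$, so the second application introduces a new discontinuity along the $\Psi_{a,b}$-preimage of the fold $u = 2 - b$. A short computation shows this preimage is the horizontal line $v = V_3 := (2a - 2 + b)/a$, and $V_1 < V_3 < V_2$ reduces to $(a-1)^2(a+1)(b(a+1) - 2) > 0$, which holds on $\mathcal{P}_1$. Thus $\mathcal{R}_{a,b}$ decomposes into four axis-aligned subrectangles cut by $u = 2-b$ and $v = V_3$, and composing the two branches case-by-case, $\Psi_{a,b}^2$ is affine on each piece with Jacobian of the form $\mathrm{diag}(\varepsilon_1 a^2, \varepsilon_2 a^2)$, $\varepsilon_j \in \{\pm 1\}$. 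The four sign patterns across the pieces match exactly those of the Jacobian of $\Gamma_{a^2}$ on the four pieces of $\mathcal{S}_{a^2}$ cut by $x = 1$ and $y = 1$.

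The candidate affine conjugacy is then $\Phi(u, v) := (a^2 - \alpha(u - U_1), \; a^2(2 - a^2) + \beta(v - V_1))$ with $\alpha := (a + 1)/(b(a + 1) - 2)$ and $\beta := a \alpha$, both positive on $\mathcal{P}_1$. The first component is determined by requiring $u = U_1 \mapsto X = a^2$, $u = U_2 \mapsto X = a^2(2 - a^2)$, and $u = 2 - b \mapsto X = 1$; the second component is determined analogously in $v$. The non-trivial algebraic check is that the dynamically-prescribed value $V_3$ satisfies $\Phi(V_3) = 1$; it reduces to the elementary identity $(a^2 - 1)^2 + a^2(2 - a^2) = 1$. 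Once $\Phi$ is fixed, the conjugacy $\Phi \circ \Psi_{a,b}^2 = \Gamma_{a^2} \circ \Phi$ is verified piece by piece: on each of the four subrectangles both sides are affine with matching Jacobians, so matching the image of a single corner (say, of $F_1$) is enough.

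The main obstacle is the four-fold case analysis: a single affine $\Phi$ must simultaneously intertwine all four branches of $\Psi_{a,b}^2$ with the four branches of $\Gamma_{a^2}$. The compatibility is encoded in the identity above, which expresses that the horizontal fold line $v = V_3$ produced by the dynamics lands precisely at the height required by the direct-product structure of $\Gamma_{a^2}$; given this coincidence, the whole check propagates mechanically from the corner data.
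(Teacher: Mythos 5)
Your proposal is correct and follows essentially the same route as the paper: the paper's conjugacy $\omega_{a,b}$ in \eqref{defome1} is exactly your composition of the diagonal change of variables $(u,v)=(x-y,x+y)$ with the affine normalization $\Phi$ (same slopes $-\alpha$ and $a\alpha$, same corner and fold correspondences, and your identity for $\Phi(V_3)=1$ is the paper's claim that $\overline{K_{-1}F_{-1}}$ is sent to $\{Y=1\}$). The only cosmetic difference is the final verification, where the paper treats three pieces by exploiting the reflection symmetry of $\Psi_{a,b}$ about $\mathcal{L}^{\prime}(b)$ and $\mathcal{L}^{-1}$ instead of your four-branch Jacobian sign-matching; both are valid.
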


\begin{proof}
Let us first note that $ \Psi_{a,b} $ displays a fixed point in the boundary of $ \mathcal{T}_1^- $ given by
\begin{equation*}\label{def2perdelta}
P_2=(x_2,0)=\left(\frac{2a}{1+a},0\right).
\end{equation*}

On the other hand, the preimage of $ \mathcal{L}^{\prime}(b) $ in $ \mathcal{T}_1^+ $ is given by 
\begin{equation*}\label{defpreldelta}
\mathcal{L}^{-1} = \{ (x,y) \in \mathcal{T}_1^+ : \ x+y=d \}
\end{equation*}
where $ d=a^{-1}(2a+b-2) .$ Moreover, $ \mathcal{R}_{a,b} \cap \mathcal{L}^{-1} = \overline{K_{-1}F_{-1}} $ being $ K_{-1} \in \mathcal{T}_1^{+} $ and $ F_{-1} \in \mathcal{L}^{\prime}(b) $ such that $ \Psi_{a,b}(K_{-1})=K $ and $ \Psi_{a,b}(F_{-1})=F .$ Finally, $ \mathcal{R}_{a,b} \cap \mathcal{L}^{\prime}(b) = \overline{KF} .$

Let us consider the change in coordinates 
\begin{equation}\label{defome1}
(X,Y)=\omega_{a,b}(x,y)=\left( \frac{x_2-(x-y)}{x_2-(2-b)},\frac{x+y-x_2}{d-x_2} \right).
\end{equation}

It is a simple calculation to check that 
\begin{eqnarray*}
\omega_{a,b}(F_1) & = & (a^2,a^2) \; , \\ 
\omega_{a,b}(F_2) & = & (a^2,a^2(2-a^2)) \; , \\
\omega_{a,b}(F_3) & = & (a^2(2-a^2),a^2(2-a^2)) \; , \\ 
\omega_{a,b}(K_1) & = & (a^2(2-a^2),a^2) \; .
\end{eqnarray*}
Therefore, $ \omega_{a,b}(\mathcal{R}_{a,b})= \mathcal{S}_{a^2} $ (see \eqref{defst}). Moreover,
\begin{eqnarray*}
\omega_{a,b}(\overline{KF}) & = & \{ (X,Y) \in \mathcal{S}_{a^2} : \ X=1 \} \; , \\
\omega_{a,b}(\overline{K_{-1}F_{-1}}) & = & \{ (X,Y) \in \mathcal{S}_{a^2} : \ X \geq 1 , \; Y=1 \} .
\end{eqnarray*}

Let us consider a point $ (x,y) \in \Omega(F,F_1,K_{-1},F_{-1}).$ On one hand, 
\begin{equation*}
\Psi_{a,b}(x,y)=(a(b-y),a(b-2+x)) \in \Omega(F_1,F_2,K,F) \subset \mathcal{T}_1^+
\end{equation*}
and therefore 
\begin{equation*}
\Psi_{a,b}^2(x,y)=(a(b-a(b-2+x)),a(b-2+a(b-y)).
\end{equation*}
On the other hand, $ (X,Y)=\omega_{a,b}(x,y) $ satisfies $ X \geq 1 $ and $ Y \geq 1 $ and, consequently,
\begin{equation*}
\Gamma_{a^2}(X,Y)=(a^2(2-X),a^2(2-Y)).
\end{equation*}
Then, one may check that $ \omega_{a,b} \circ \Psi_{a,b}^2(x,y) = \Gamma_{a^2}(X,Y) .$ 

Now, let us consider a point $ (x,y) \in \Omega(F_2,K,F_{-1},K_{-1}) $ and let 
\begin{equation*}
(\widetilde{x},\widetilde{y})=(d-y,d-x)
\end{equation*}
be the symmetric point of $ (x,y) $ with respect to $ \mathcal{L}^{-1} .$ Then, $ \Psi_{a,b}^2(x,y)= \Psi_{a,b}^2(\widetilde{x},\widetilde{y}). $ Moreover, $ (X,Y)=\omega_{a,b}(x,y) $ and $ (\widetilde{X},\widetilde{Y})=\omega_{a,b}(\widetilde{x},\widetilde{y}) $ are symmetric points in $ \mathcal{S}_{a^2} $ with respect to $ Y = 1 $ and therefore
\begin{equation*}
\omega_{a,b} \circ \Psi_{a,b}^2(x,y)= \omega_{a,b} \circ \Psi_{a,b}^2(\widetilde{x},\widetilde{y})=\Gamma_{a^2}(\widetilde{X},\widetilde{Y}) = \Gamma_{a^2}(X,Y) .
\end{equation*}

Finally, let us consider a point $ (x,y) \in \Omega(F,K,F_3,K_1) $ and let
\begin{equation*}
(\widetilde{x},\widetilde{y})=(y+d,x-d)
\end{equation*}
be the symmetric point of $ (x,y) $ with respect to $ \mathcal{L}^{\prime}(b) .$ Then, $ \Psi_{a,b}^2(x,y)= \Psi_{a,b}^2(\widetilde{x},\widetilde{y}). $ Furthermore, $ (X,Y)=\omega_{a,b}(x,y) $ and $ (\widetilde{X},\widetilde{Y})=\omega_{a,b}(\widetilde{x},\widetilde{y}) $ are symmetric points in $ \mathcal{S}_{a^2} $ with respect to $ X = 1 $ and therefore
\begin{equation*}
\omega_{a,b} \circ \Psi_{a,b}^2(x,y)= \omega_{a,b} \circ \Psi_{a,b}^2(\widetilde{x},\widetilde{y})=\Gamma_{a^2}(\widetilde{X},\widetilde{Y}) = \Gamma_{a^2}(X,Y) .
\end{equation*}
\end{proof}

For each natural number $ n $ let us define 
\begin{equation}\label{defP1n}
\mathcal{P}_{1,n}= \left\{ (a,b) \in \mathcal{P}_1: \ a \in \left( 2^{\frac{1}{2^{n+1}}} , 2^{\frac{1}{2^n}} \right] \right\}.
\end{equation}
At this point, we can state the following result, whose proof is an easy consequence of Lemma \ref{lemgammat}, Lemma \ref{leminvdelta} and Lemma \ref{lemconjdelta}.

\begin{proposition}\label{procoedel}
For every $ n \in \mathbb{N} $ and for every $ (a,b) \in \mathcal{P}_{1,n} $ the map $ \Psi_{a,b} $ exhibits $ 2^{n-1} $ strongly topologically mixing strange attractors.
\end{proposition}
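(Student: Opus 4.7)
The plan is to pull back the strange attractor structure of the two-dimensional direct-product tent map $\Gamma_{a^{2}}$ supplied by Lemma \ref{lemgammat} to $\Psi_{a,b}$ via the affine conjugacy of Lemma \ref{lemconjdelta}, on the absorbing set provided by Lemma \ref{leminvdelta}.

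The first step is to translate the parameter range. The condition $a\in(2^{1/2^{n+1}},2^{1/2^{n}}]$ built into the definition of $\mathcal{P}_{1,n}$ squares to $a^{2}\in(2^{1/2^{n}},2^{1/2^{n-1}}]$, which is exactly the window where Lemma \ref{lemgammat} applies to $\Gamma_{a^{2}}$ with its internal index equal to $n-1$: part (a) supplies a single strongly topologically mixing strange attractor when $n=1$, while part (b) supplies $2^{n-1}$ strange attractors $\widetilde{\mathcal{S}}_{a^{2},0},\ldots,\widetilde{\mathcal{S}}_{a^{2},2^{n-1}-1}\subset\mathcal{S}_{a^{2}}$ when $n\geq 2$, each with two positive Lyapunov exponents and each strongly topologically mixing on its constituent pieces.

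The second step is to transfer these attractors by the affine homeomorphism $\omega_{a,b}$ of Lemma \ref{lemconjdelta}, which conjugates $\Psi_{a,b}^{2}|_{\mathcal{R}_{a,b}}$ to $\Gamma_{a^{2}}|_{\mathcal{S}_{a^{2}}}$. Setting $\mathcal{A}_{k}:=\omega_{a,b}^{-1}(\widetilde{\mathcal{S}}_{a^{2},k})$ yields $2^{n-1}$ pairwise disjoint compact subsets of $\mathcal{R}_{a,b}$ invariant under $\Psi_{a,b}^{2}$. Because $\omega_{a,b}$ is an affine homeomorphism with bounded condition number, each $\mathcal{A}_{k}$ inherits from $\widetilde{\mathcal{S}}_{a^{2},k}$ the strong topological mixing, the transitivity, the dense orbit with exponentially growing derivative, and the two positive Lyapunov exponents. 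The absorbing property from Lemma \ref{leminvdelta} together with the open basin structure of the product tent-map attractors then guarantees that the stable set of every $\mathcal{A}_{k}$ has nonempty interior inside $\mathcal{T}$, confirming that each one is a strange attractor in the sense of Definition \ref{defstratt}.

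The principal point requiring care is that Lemma \ref{lemconjdelta} supplies a conjugacy only for the square $\Psi_{a,b}^{2}$, so \emph{a priori} $\Psi_{a,b}$ itself could permute the sets $\mathcal{A}_{k}$ among themselves. This is not an obstruction here: the proposition asserts merely the existence of $2^{n-1}$ pairwise distinct strongly topologically mixing strange attractors exhibited by the dynamics of $\Psi_{a,b}$, and these are precisely the $\mathcal{A}_{k}$, which are preserved setwise by $\Psi_{a,b}^{2}$ and together constitute the inheritance of $\Psi_{a,b}$ from the product tent-map attractors of $\Gamma_{a^{2}}$.
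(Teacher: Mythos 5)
Your reduction is exactly the paper's: translate $a\in\bigl(2^{1/2^{n+1}},2^{1/2^{n}}\bigr]$ into $a^{2}\in\bigl(2^{1/2^{n}},2^{1/2^{n-1}}\bigr]$, invoke Lemma \ref{lemgammat} for $\Gamma_{a^{2}}$ with internal index $n-1$, and pull the attractors back through the conjugacy of Lemma \ref{lemconjdelta} on the absorbing set of Lemma \ref{leminvdelta}; the paper itself offers nothing beyond this citation, so up to that point you match it, and your parameter bookkeeping is correct. The problem is the very point you flag and then dismiss. By Definition \ref{defstratt} an attractor for $\Psi_{a,b}$ must be $\Psi_{a,b}$-invariant and $\Psi_{a,b}$-transitive; a set that is only $\Psi_{a,b}^{2}$-invariant is not an attractor of $\Psi_{a,b}$ at all, so if $\Psi_{a,b}$ permutes the $\mathcal{A}_{k}$ nontrivially, the genuine attractors of $\Psi_{a,b}$ are the unions over the cycles of that permutation and there are strictly fewer than $2^{n-1}$ of them. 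Declaring that the $\mathcal{A}_{k}$ are ``exhibited by the dynamics'' because the square preserves them does not meet the definition; deciding this invariance question is precisely the content of the proposition that is not already contained in the three lemmas.

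Moreover the permutation is computable and is not the identity. Tracing the three cases in the proof of Lemma \ref{lemconjdelta} (or simply checking the corner images $F\mapsto F_{1}\mapsto F_{2}\mapsto F_{3}$ and $K\mapsto K_{1}$ under $\omega_{a,b}$ from \eqref{defome1}) gives $\omega_{a,b}\circ\Psi_{a,b}\circ\omega_{a,b}^{-1}(X,Y)=(Y,\lambda_{a^{2}}(X))$ on $\mathcal{S}_{a^{2}}$. Hence $J_{a^{2},i}\times J_{a^{2},j}$ is sent onto $J_{a^{2},j}\times J_{a^{2},i+1}$, and the label $k=j-i$ of $\widetilde{\mathcal{S}}_{a^{2},k}$ is sent to $1-k\pmod{2^{n-1}}$. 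For $n\geq 2$ this involution of $\mathbb{Z}/2^{n-1}\mathbb{Z}$ has no fixed point, so the sets $\mathcal{A}_{k}$ are paired off two by two and $\Psi_{a,b}$ restricted to $\mathcal{R}_{a,b}$ has $2^{n-2}$ invariant transitive attractors rather than $2^{n-1}$ (and, being unions of pieces with disjoint interiors that are cyclically permuted, they are not strongly topologically mixing for $\Psi_{a,b}$ in the sense of the paper's definition either). So the step you wave away is not merely unproved: as far as I can see it fails, and the count survives only if one reinterprets the statement as counting attractors of $\Psi_{a,b}^{2}$. At minimum you must compute the action of $\Psi_{a,b}$ itself, not only of $\Psi_{a,b}^{2}$, on the sets $\mathcal{A}_{k}$ before asserting the number $2^{n-1}$; the paper's one-sentence proof does not do this either, so the gap is inherited rather than introduced, but it is a genuine gap.
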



\subsection{The region $ \mathcal{P}_2 $}\label{subsecP2}

Let us recall the set $ \mathcal{R}_1=\Omega(C_1,D_1,E_1,F_1) $ introduced in \eqref{defR1}. Now, we define the family of maps $ \mathbb{F}_2 = \{ \Psi_{a,b} \in \mathbb{F} : \; (a,b) \in \mathcal{P}_2 \} $ where, see Figure \ref{figsetP2},
\begin{equation}\label{defFpi}
\mathcal{P}_2 = \left\{ (a,b) \in \mathcal{P} : \; b \geq \frac{2+a}{1+a} \right\}.
\end{equation}
Then, $ (a,b) \in \mathcal{P}_2 $ if and only if $ D_1 $ belongs to $ \mathcal{T}_{1}^+ .$

Let us compute 
\begin{eqnarray}
D_2 & = & \Psi_{a,b}(D_1) = (a(a+b-ab),a(-2+a^2+b+ab-a^2b)) , \nonumber \\
D_3 & = & \Psi_{a,b}(D_2) = \nonumber \\
& & (a(2a-a^3+b-ab-a^2b+a^3b),a(-2+a^2+b+ab-a^2b)). \label{defD3}
\end{eqnarray} 
These points respectively belong to $ \mathcal{T}_1^+ $ and $ \mathcal{T}_0^+ .$ Let 
\begin{equation*}
N = \left(1,a(-2+a^2+b+ab-a^2b) \right)
\end{equation*} 
be the intersection between $ \overline{D_2D_3} $ and $ \mathcal{C} .$ We define the invariant set, see Figure \ref{figRpi}, 
\begin{equation}\label{defRpi}
\mathcal{R}_{a,b} = \Omega(D_1,D_2,D_3,N_1)
\end{equation}
being 
\begin{equation*}
N_1 = \Psi_{a,b}(N) =  \left( a(2 a + b - ab - a^2 b - a^3 + a^3 b), a(b-1) \right).
\end{equation*}
The aim of this section is to prove that for every $ (a,b) \in \mathcal{P}_2 $ there exists certain domain where $ \Psi_{a,b}^2 $ may be conjugate to a direct product of two one-dimensional tent maps.

\begin{figure}[!ht]
\centering
\begin{minipage}{1 \linewidth}
\centering
\includegraphics[width=0.75\textwidth]{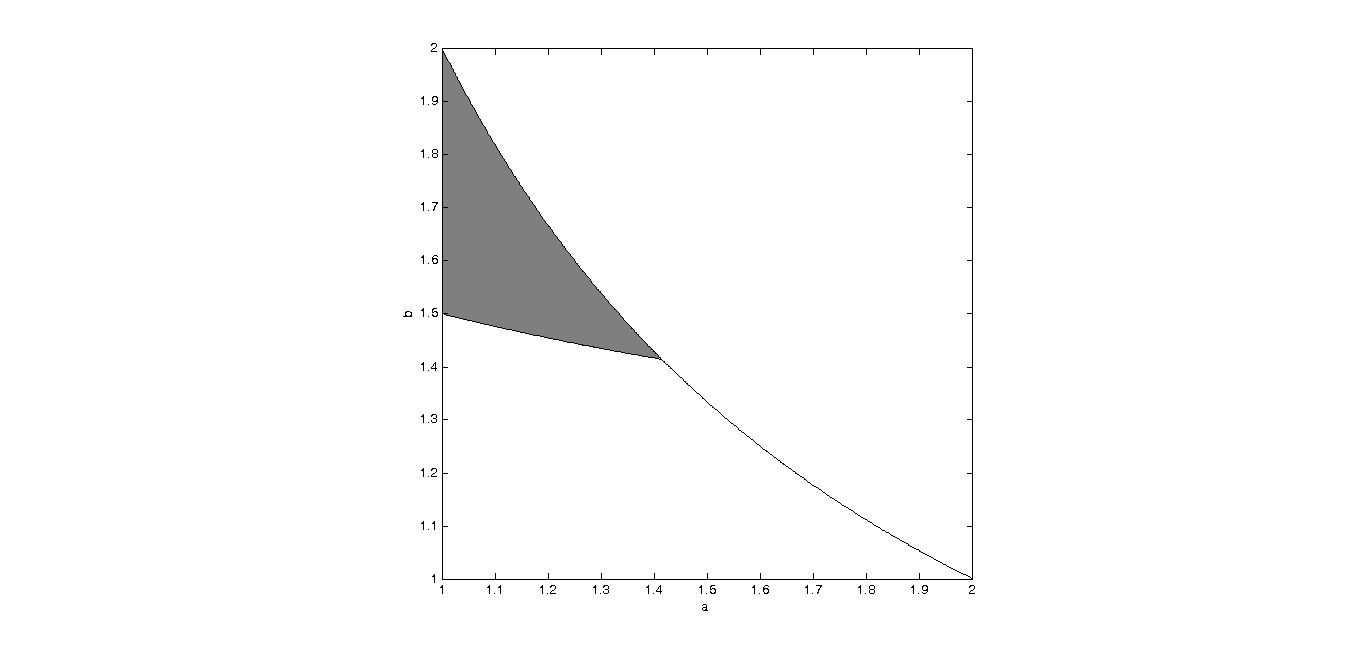}
\caption{The set of parameters $ \mathcal{P}_2 .$}
\label{figsetP2}
\end{minipage}
\end{figure}

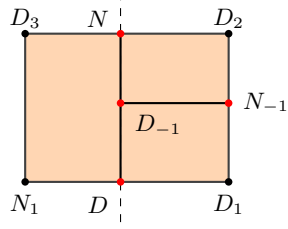
\begin{figure}[!ht]
\centering
\begin{minipage}{0.9 \linewidth}
\centering
{\small
\begin{tikzpicture}[xscale=10,yscale=10]

\fill[mycolor,thick] (1.1437,0.6135) -- (1.1437,0.8107) -- (0.8732,0.8107) -- (0.8732,0.6135); 
\draw[darkgray,thick] (1.1437,0.6135) -- (1.1437,0.8107) -- (0.8732,0.8107) -- (0.8732,0.6135) -- (1.1437,0.6135); 

\draw [black,dashed] (1.0000,0.5600) -- (1.0000,0.8600);
\draw [black,thick] (1.0000,0.6135) -- (1.0000,0.8107);
\draw [black,thick] (1.0000,0.7183) -- (1.1437,0.7183); 

\fill[red] (1.0000,0.6135) circle (0.05mm); 
\node[black] at  (1.0000-0.03,0.6135-0.03) {$ D $}; 
\fill[black] (1.1437,0.6135) circle (0.05mm); 
\node[black] at  (1.1437,0.6135-0.03) {$ D_1 $}; 
\fill[black] (1.1437,0.8107) circle (0.05mm); 
\node[black] at  (1.1437,0.8107+0.02) {$ D_2 $}; 
\fill[black] (0.8732,0.8107) circle (0.05mm); 
\node[black] at  (0.8732,0.8107+0.02) {$ D_3 $}; 
\fill[red] (1.0000,0.8107) circle (0.05mm); 
\node[black] at  (1.0000-0.03,0.8107+0.02) {$ N $}; 
\fill[black] (0.8732,0.6135) circle (0.05mm); 
\node[black] at  (0.8732,0.6135-0.03) {$ N_1 $}; 

\fill[red] (1.0000,0.7183) circle (0.05mm); 
\node[black] at  (1.0000+0.05,0.7183-0.03) {$ D_{-1} $}; 
\fill[red] (1.1437,0.7183) circle (0.05mm); 
\node[black] at  (1.1437+0.05,0.7183) {$ N_{-1} $};

\end{tikzpicture}
}
\caption{The set $ \mathcal{R}_{a,b} .$}
\label{figRpi}
\end{minipage}
\end{figure}

\begin{lemma}\label{leminvpi}
For every $ (a,b) \in \mathcal{P}_2 $ the set $ \mathcal{R}_{a,b} $ defined in \eqref{defRpi} is strictly invariant by $ \Psi_{a,b} .$
\end{lemma}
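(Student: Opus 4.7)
The plan is to follow the template of Lemma \ref{leminvdelta}. First I would read off from the explicit coordinates that $\mathcal{R}_{a,b}=\Omega(D_1,D_2,D_3,N_1)$ is an axis-aligned rectangle, and verify the geometric positioning inside $\mathcal{T}$: using $b\geq(2+a)/(1+a)$ together with $ab\leq 2$ and $a>1$, the line $\mathcal{C}$ enters $\mathcal{R}_{a,b}$ along the segment $\overline{DN}$ and splits it into $\mathcal{R}^+=\mathcal{R}_{a,b}\cap\mathcal{T}_1$ and $\mathcal{R}^-=\mathcal{R}_{a,b}\cap\mathcal{T}_0$, while $\mathcal{L}^{\prime}(b)$ passes to the right of $\mathcal{R}^+$ and $\mathcal{L}(b)$ passes below $\mathcal{R}^-$. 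Consequently $\mathcal{R}^+\subset\mathcal{T}_1^+$ and $\mathcal{R}^-\subset\mathcal{T}_0^+$, so on each of the two halves only one branch of \eqref{defpsiab} is active.

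On $\mathcal{R}^+$ the active branch is $(x,y)\mapsto(a(b-y),a(b-2+x))$, and direct substitution into the four vertices $D,D_1,D_2,N$ produces exactly $D_1,D_2,D_3,N_1$; since an affine map sends a rectangle to the quadrilateral determined by its vertex images, this yields $\Psi_{a,b}(\mathcal{R}^+)=\mathcal{R}_{a,b}$, which already gives $\Psi_{a,b}(\mathcal{R}_{a,b})\supseteq\mathcal{R}_{a,b}$. For the opposite inclusion, on $\mathcal{R}^-$ the branch is $(x,y)\mapsto(a(b-y),a(b-x))$, whose preimage of $\mathcal{C}$ is the horizontal line $y=b-1/a$. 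This line cuts $\mathcal{R}^-$ along the segment $\overline{D_{-1}N_{-1}}$ shown in Figure \ref{figRpi}, producing two affine sub-rectangles with explicit corners; I would then check that the vertex images (including those of the new points $D_{-1}$ and $N_{-1}$) land inside $\mathcal{R}_{a,b}$, the upper strip mapping into $\mathcal{R}^-$ and the lower strip into $\mathcal{R}^+$. Each such check reduces to a polynomial inequality in $(a,b)$ that is equivalent to a direct consequence of the defining conditions of $\mathcal{P}_2$. Convexity of $\mathcal{R}_{a,b}$ upgrades these vertex checks to $\Psi_{a,b}(\mathcal{R}^-)\subseteq\mathcal{R}_{a,b}$, and strict invariance (in the sense of equality) follows from $\Psi_{a,b}(\mathcal{R}^+)=\mathcal{R}_{a,b}$.

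The principal obstacle is bookkeeping rather than difficulty: $\mathcal{R}_{a,b}$ sits close to all three discontinuity lines of $\Psi_{a,b}$ simultaneously, and its image decomposes into three affine pieces (one from $\mathcal{R}^+$ and two from the sub-pieces of $\mathcal{R}^-$ produced by $y=b-1/a$). One must verify carefully which sub-pieces appear, keep track of which branch of \eqref{defpsiab} acts on each, and confirm that no further discontinuity secretly enters $\mathcal{R}_{a,b}$ for boundary parameters in $\mathcal{P}_2$. Once the sub-region identifications are locked in, the inequalities to be checked are all equivalent to explicit consequences of $(a,b)\in\mathcal{P}_2$.
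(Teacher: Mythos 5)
Your proposal is correct and follows the same strategy as the paper: split $\mathcal{R}_{a,b}$ along $\mathcal{C}$ into $\mathcal{R}^+\subset\mathcal{T}_1^+$ and $\mathcal{R}^-\subset\mathcal{T}_0^+$, observe that the single affine branch on $\mathcal{R}^+$ carries $D,D_1,D_2,N$ onto $D_1,D_2,D_3,N_1$ so that $\Psi_{a,b}(\mathcal{R}^+)=\mathcal{R}_{a,b}$, and then show $\Psi_{a,b}(\mathcal{R}^-)\subseteq\mathcal{R}_{a,b}$. The paper packages this last containment more economically: since the $\mathcal{T}_1^+$ branch is the $\mathcal{T}_0^+$ branch precomposed with the reflection across $\mathcal{C}$, the inclusion $\Psi_{a,b}(\mathcal{R}^-)\subseteq\Psi_{a,b}(\mathcal{R}^+)$ is equivalent to $\mathcal{R}^-$ being contained in the reflection of $\mathcal{R}^+$, which for these axis-aligned rectangles reduces to the single abscissa comparison ``$D_3\in\overline{ND_2^{\prime}}$''; your direct vertex-image check of $\Psi_{a,b}(D_3)$ and $\Psi_{a,b}(N_1)$ plus convexity proves the same thing with a little more computation. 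Two harmless slips: the segment $\overline{D_{-1}N_{-1}}$ of Figure \ref{figRpi} lies in $\mathcal{R}^+$, not $\mathcal{R}^-$; and the line $y=b-1/a$ is not a discontinuity of $\Psi_{a,b}$ itself (only of $\Psi_{a,b}^2$), so the image of $\mathcal{R}^-$ is a single affine quadrilateral rather than two pieces --- the extra subdivision is only needed later, in Lemma \ref{lemconjpi}. Neither affects the validity of the argument.
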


\begin{proof}
We only need to prove that $ D_3 $ belongs to the segment $ \overline{ND_2^{\prime}} ,$ where $ D_2^{\prime} $ is the symmetric point of $ D_2 $  with respect to $ \mathcal{C} .$ 

It is easy to check that the abscise of $ D_2^{\prime} $ is 
\begin{equation*} 
x_2^{\prime} = 2-a^2-ab+a^2b.
\end{equation*}
By comparing $ x_2^{\prime} $ to the abscise of $ D_3 $ (see \eqref{defD3}) one may check that $ D_3 $ belongs to the segment $ \overline{ND_2^{\prime}} $ for every $ (a,b) \in \mathcal{P}_2 .$
\end{proof}

\begin{remark}
As in the previous case, see Lemma \ref{leminvdelta}, we could prove that the set $R_{a,b}$ (see Figure \ref{figRpi}) captures any orbit starting from the interior of $\mathcal{T}.$ However, the proof is not so easy as the one given before. In any case, we want to stress that this kind of proofs is not necessary in order to ensure the existence of strange attractors for $\Psi_{a,b}$ contained in $ \mathcal{R}_{a,b}.$
\end{remark}

\begin{lemma}\label{lemconjpi}
For every $ (a,b) \in \mathcal{P}_2$ there exists an affine change in coordinates such that $ \Psi_{a,b}^2 $ restricted to $ \mathcal{R}_{a,b} $ transforms into the map $ \Gamma_{a^2} $ restricted to $ \mathcal{S}_{a^2} $ (see \eqref{defgamt} and \eqref{defst}).
\end{lemma}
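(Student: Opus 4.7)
The proof will mirror the strategy of Lemma~\ref{lemconjdelta}, but the geometry is simpler because $\mathcal{R}_{a,b}$ is now an axis-aligned rectangle. My first task is to identify the appropriate ``second fold" line. Since both branches of $\Psi_{a,b}$ on $\mathcal{T}_0^+\cup\mathcal{T}_1^+$ take the form $(x,y)\mapsto (a(b-y),\ldots)$, the preimage of the primary fold line $\mathcal{C}:x=1$ is the horizontal line $\mathcal{C}^{-1}:y=b-1/a$, which intersects $\mathcal{R}_{a,b}$ in the segment $\overline{D_{-1}N_{-1}}$ drawn in Figure~\ref{figRpi}. Together, $\mathcal{C}$ and $\mathcal{C}^{-1}$ partition $\mathcal{R}_{a,b}$ into four sub-rectangles $\mathcal{R}^{\pm\pm}$, namely the loci on which the second iterate $\Psi_{a,b}^2$ has constant linear part.

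Since both fold lines are now axis-aligned, I would try a product-form change of coordinates. Specifically, I take
\begin{equation*}
(X,Y)=\omega_{a,b}(x,y)=\left(\frac{(a+1)x-ab}{a+1-ab},\ \frac{a^{2}b-a(a+1)y}{a+1-ab}\right),
\end{equation*}
chosen precisely so that $X=1$ on $\mathcal{C}$ and $Y=1$ on $\mathcal{C}^{-1}$. A direct computation then shows $\omega_{a,b}(D_1)=(a^2,a^2)$, $\omega_{a,b}(D_2)=(a^2,a^2(2-a^2))$, $\omega_{a,b}(D_3)=(a^2(2-a^2),a^2(2-a^2))$, and $\omega_{a,b}(N_1)=(a^2(2-a^2),a^2)$, so $\omega_{a,b}(\mathcal{R}_{a,b})=\mathcal{S}_{a^2}$. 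The key point, and the only nontrivial design choice, is that the $Y$-coordinate has negative slope in $y$: this orientation reversal is forced because $\Psi_{a,b}^2$ acts with a negative $y$-derivative on the ``core" sub-rectangle while the corresponding piece of $\Gamma_{a^2}$ does not.

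Next, I would verify the conjugation on the sub-rectangle $\mathcal{R}^{--}=[x_L,1]\times[y_b,b-1/a]$, where $x\leq 1$ and $a(b-y)\geq 1$. Here $\Psi_{a,b}$ sends $\mathcal{R}^{--}$ into $\mathcal{T}_1^+$ (the inequality $a(a+1-ab)\leq 2-b$ needed for this is \emph{exactly} the defining condition $b\geq(a+2)/(a+1)$ of $\mathcal{P}_2$), so $\Psi_{a,b}^2$ restricted to $\mathcal{R}^{--}$ is the single affine map $(x,y)\mapsto (a^2x+ab-a^2b,\,-a^2y+ab+a^2b-2a)$. A short algebraic check shows $\omega_{a,b}\circ\Psi_{a,b}^2=\Gamma_{a^2}\circ\omega_{a,b}$ on $\mathcal{R}^{--}$, with $\Gamma_{a^2}$ acting as $(X,Y)\mapsto(a^2X,a^2Y)$ on the image square $\omega_{a,b}(\mathcal{R}^{--})=[a^2(2-a^2),1]\times[1,a^2]$.

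The remaining three pieces are handled by the fold-symmetry argument that already worked in Lemma~\ref{lemconjdelta}. The fact that $\mathcal{F}_{\mathcal{C}}$ is the first fold in the definition of $\Psi_{a,b}$ means that points symmetric across $\mathcal{C}$ give the same value of $\Psi_{a,b}$, hence of $\Psi_{a,b}^2$; and because $\Psi_{a,b}(\mathcal{C}^{-1})\subset\mathcal{C}$, points symmetric across $\mathcal{C}^{-1}$ give the same value of $\Psi_{a,b}^2$. One verifies that $\omega_{a,b}$ carries the $\mathcal{C}$-reflection $(x,y)\leftrightarrow(2-x,y)$ to the $\{X=1\}$-reflection $(X,Y)\leftrightarrow(2-X,Y)$ and the $\mathcal{C}^{-1}$-reflection $(x,y)\leftrightarrow(x,2b-2/a-y)$ to the $\{Y=1\}$-reflection $(X,Y)\leftrightarrow(X,2-Y)$, which are precisely the identifications made by the two folds in $\Gamma_{a^2}$. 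Hence the identity on $\mathcal{R}^{--}$ propagates via these reflections to $\mathcal{R}^{+-}$, $\mathcal{R}^{-+}$ and $\mathcal{R}^{++}$. The main obstacle I anticipate is purely bookkeeping: checking that the various polynomial formulas for $\Psi_{a,b}$ on the two smoothness regions $\mathcal{T}_0^+$ and $\mathcal{T}_1^+$ combine consistently under $\omega_{a,b}$, and in particular that the single $\omega_{a,b}$ defined above produces the correct conjugation on all four sub-rectangles without case-splitting the change of coordinates itself.
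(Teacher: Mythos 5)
Your change of coordinates is, after clearing denominators, exactly the map $\tau_{a,b}$ used in the paper: with $x_2=y_2=\frac{ab}{1+a}$ and $d=\frac{ab-1}{a}$ one checks that $\frac{x-x_2}{1-x_2}=\frac{(a+1)x-ab}{a+1-ab}$ and $\frac{y_2-y}{y_2-d}=\frac{a^2b-a(a+1)y}{a+1-ab}$, so the overall strategy (send $\mathcal{R}_{a,b}$ to $\mathcal{S}_{a^2}$, verify the conjugacy on one sub-rectangle where $\Psi_{a,b}^2$ is a single affine map, and propagate to the other three pieces via the fold symmetries across $\mathcal{C}$ and $\mathcal{C}^{-1}$) coincides with the paper's proof; the only structural difference is that the paper performs the direct verification on $\Omega(D,D_1,N_{-1},D_{-1})$ (the piece with $x\geq 1$, $y\leq d$, i.e.\ $X\geq 1$, $Y\geq 1$) rather than on your bottom-left piece.

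There is, however, one concrete error in your verification step. On $\mathcal{R}^{--}$ (where $x\leq 1$ and $y\leq d$) you correctly compute $\Psi_{a,b}^2(x,y)=(a^2x+ab-a^2b,\,-a^2y+ab+a^2b-2a)$ and correctly identify the image square as $[a^2(2-a^2),1]\times[1,a^2]$; but on that square one has $Y\geq 1$, so the second tent map is in its decreasing branch and $\Gamma_{a^2}$ acts there as $(X,Y)\mapsto\bigl(a^2X,\,a^2(2-Y)\bigr)$, not as $(X,Y)\mapsto(a^2X,a^2Y)$ as you assert. With your (incorrect) formula the ``short algebraic check'' fails in the second component: $\omega_{a,b}\circ\Psi_{a,b}^2$ produces $a^2(2-Y)$, and the discrepancy $a^2(2-Y)-a^2Y$ does not vanish. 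The conjugacy identity you want on $\mathcal{R}^{--}$ is nevertheless true once the correct branch of $\Gamma_{a^2}$ is used, so this is a repairable slip rather than a failure of the method --- but as written the central computation does not close. (The region on which $\Gamma_{a^2}$ really is the pure homothety $(a^2X,a^2Y)$ is $X\leq 1$, $Y\leq 1$, which corresponds to $x\leq 1$, $y\geq d$, i.e.\ the piece $\Omega(N,D_3,N_1,\cdot)\cap\{y\geq d\}$, where $\Psi_{a,b}^2$ carries the $+a^2y$ rather than the $-a^2y$ formula; you have paired the affine branch of $\Psi_{a,b}^2$ from one sub-rectangle with the branch of $\Gamma_{a^2}$ belonging to another.) The remaining symmetry computations (that $\omega_{a,b}$ intertwines $(x,y)\mapsto(2-x,y)$ with $(X,Y)\mapsto(2-X,Y)$ and $(x,y)\mapsto(x,2d-y)$ with $(X,Y)\mapsto(X,2-Y)$) are correct and match the paper's.
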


\begin{proof}
Let us first note that $ \Psi_{a,b} $ displays a fixed point in the boundary of $ \mathcal{T}_0^+ $ given by
\begin{equation*}\label{def2perpi}
P_2=(x_2,y_2)=\left(\frac{ab}{1+a},\frac{ab}{1+a}\right).
\end{equation*}

On the other hand, the preimage of $ \mathcal{C} $ in $ \mathcal{T}_1^+ $ is given by 
\begin{equation*}\label{defprelpi}
\mathcal{C}^{-1} = \{ (x,y) \in \mathcal{T}_1^+ : \ y = d \}
\end{equation*}
where $ d=a^{-1}(ab-1) .$ Moreover, $ \mathcal{R}_{a,b} \cap \mathcal{C}^{-1} = \overline{D_{-1}N_{-1}} $ being $ D_{-1} \in \mathcal{C} $ and $ N_{-1} \in \mathcal{T}_1^{+} $ such that $ \Psi_{a,b}(D_{-1})=D $ and $ \Psi_{a,b}(N_{-1})=N .$ Finally, $ \mathcal{R}_{a,b} \cap \mathcal{C} = \overline{DN} .$

Let us consider the change in coordinates 
\begin{equation}\label{defome2}
(X,Y)=\tau_{a,b}(x,y)=\left( \frac{x-x_2}{1-x_2},\frac{y_2-y}{y_2-d} \right).
\end{equation}

It is a simple calculation to check that 
\begin{eqnarray*}
\tau_{a,b}(D_1) & = & (a^2,a^2) \; , \\ 
\tau_{a,b}(D_2) & = & (a^2,a^2(2-a^2)) \; , \\
\tau_{a,b}(D_3) & = & (a^2(2-a^2),a^2(2-a^2)) \; , \\ 
\tau_{a,b}(N_1) & = & (a^2(2-a^2),a^2) \; .
\end{eqnarray*}
Therefore, $ \tau_{a,b}(\mathcal{R}_{a,b})= \mathcal{S}_{a^2} $ (see \eqref{defst}). Moreover,
\begin{eqnarray*}
\tau_{a,b}(\overline{DN}) & = & \{ (X,Y) \in \mathcal{S}_{a^2} : \ X=1 \} \; , \\
\tau_{a,b}(\overline{D_{-1}N_{-1}}) & = & \{ (X,Y) \in \mathcal{S}_{a^2} : \ X \geq 1 , \; Y=1 \} .
\end{eqnarray*}

Let us consider a point $ (x,y) \in \Omega(D,D_1,N_{-1},D_{-1}).$ On one hand, 
\begin{equation*}
\Psi_{a,b}(x,y)=(a(b-y),a(b-2+x)) \in \Omega(D_1,D_2,N,D) \subset \mathcal{T}_1^+
\end{equation*}
and therefore 
\begin{equation*}
\Psi_{a,b}^2(x,y)=(a(b-a(b-2+x)),a(b-2+a(b-y)).
\end{equation*}
On the other hand, $ (X,Y)=\tau_{a,b}(x,y) $ satisfies $ X \geq 1 $ and $ Y \geq 1 $ and, consequently,
\begin{equation*}
\Gamma_{a^2}(X,Y)=(a^2(2-X),a^2(2-Y)).
\end{equation*}
Then, one may check that $ \tau_{a,b} \circ \Psi_{a,b}^2(x,y) = \Gamma_{a^2}(X,Y) .$ 

Now, let us consider a point $ (x,y) \in \Omega(D_2,N,D_{-1},N_{-1}) $ and let 
\begin{equation*}
(\widetilde{x},\widetilde{y})=(x,2d-y)
\end{equation*}
be the symmetric point of $ (x,y) $ with respect to $ \mathcal{C}^{-1} .$ Then, $ \Psi_{a,b}^2(x,y)= \Psi_{a,b}^2(\widetilde{x},\widetilde{y}). $ Furthermore, $ (X,Y)=\tau_{a,b}(x,y) $ and $ (\widetilde{X},\widetilde{Y})=\tau_{a,b}(\widetilde{x},\widetilde{y}) $ are symmetric points in $ \mathcal{S}_{a^2} $ with respect to $ Y = 1 $ and therefore
\begin{equation*}
\tau_{a,b} \circ \Psi_{a,b}^2(x,y)= \tau_{a,b} \circ \Psi_{a,b}^2(\widetilde{x},\widetilde{y})=\Gamma_{a^2}(\widetilde{X},\widetilde{Y}) = \Gamma_{a^2}(X,Y) .
\end{equation*}

Finally, let us consider a point $ (x,y) \in \Omega(D,N,D_3,N_1) $ and let
\begin{equation*}
(\widetilde{x},\widetilde{y})=(2-x,y)
\end{equation*}
be the symmetric point of $ (x,y) $ with respect to $ \mathcal{C} .$ Then, $ \Psi_{a,b}^2(x,y)= \Psi_{a,b}^2(\widetilde{x},\widetilde{y}). $ Moreover, $ (X,Y)=\tau_{a,b}(x,y) $ and $ (\widetilde{X},\widetilde{Y})=\tau_{a,b}(\widetilde{x},\widetilde{y}) $ are symmetric points in $ \mathcal{S}_{a^2} $ with respect to $ X = 1 $ and therefore
\begin{equation*}
\tau_{a,b} \circ \Psi_{a,b}^2(x,y)= \tau_{a,b} \circ \Psi_{a,b}^2(\widetilde{x},\widetilde{y})=\Gamma_{a^2}(\widetilde{X},\widetilde{Y}) = \Gamma_{a^2}(X,Y) .
\end{equation*}

\end{proof}

For each natural number $ n $ let us define 
\begin{equation}\label{defP2n}
\mathcal{P}_{2,n}= \left\{ (a,b) \in \mathcal{P}_2 : \ a \in \left(2^{\frac{1}{2^{n+1}}},2^{\frac{1}{2^n}} \right] \right\}.
\end{equation}
As a consequence of Lemma \ref{lemgammat}, Lemma \ref{leminvpi} and Lemma \ref{lemconjpi} we have the following result.

\begin{proposition}\label{procoepi}
For every $ n \in \mathbb{N} $ and for every $ (a,b) \in \mathcal{P}_{2,n} $ the map $ \Psi_{a,b} $ exhibits $ 2^{n-1} $ strongly topologically mixing strange attractors.
\end{proposition}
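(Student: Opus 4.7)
The plan is to mirror the argument sketched for Proposition \ref{procoedel}, simply substituting Lemma \ref{leminvpi} for Lemma \ref{leminvdelta} and Lemma \ref{lemconjpi} for Lemma \ref{lemconjdelta}. Concretely, observe first that the defining window $a \in (2^{1/2^{n+1}}, 2^{1/2^n}]$ of $\mathcal{P}_{2,n}$ squares to $\mu := a^2 \in (2^{1/2^n}, 2^{1/2^{n-1}}]$, which is precisely the interval in which Lemma \ref{lemgammat} supplies $2^{n-1}$ strongly topologically mixing strange attractors for the tent-product map $\Gamma_{a^2}$ on $\mathcal{S}_{a^2}$: part (a) when $n = 1$, and part (b) applied with exponent $n-1$ when $n \geq 2$.

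Second, I transport these attractors back to $\Psi_{a,b}$-dynamics. By Lemma \ref{leminvpi}, the rectangle $\mathcal{R}_{a,b}$ is strictly $\Psi_{a,b}$-invariant; by Lemma \ref{lemconjpi}, the affine bijection $\tau_{a,b}$ conjugates $\Psi_{a,b}^2|_{\mathcal{R}_{a,b}}$ with $\Gamma_{a^2}|_{\mathcal{S}_{a^2}}$. Pulling each $\Gamma_{a^2}$-attractor back through $\tau_{a,b}^{-1}$ therefore produces a strongly topologically mixing strange attractor for $\Psi_{a,b}^2$ contained in $\mathcal{R}_{a,b}$, because an affine change of coordinates preserves invariance, transitivity, the existence of a stable set with nonempty interior, and the exponential derivative growth required by Definition \ref{defstratt} (modulo a fixed multiplicative constant depending only on $\|D\tau_{a,b}\|\cdot\|D\tau_{a,b}^{-1}\|$).

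Finally, to promote $\Psi_{a,b}^2$-attractors to genuine $\Psi_{a,b}$-attractors I rely on the explicit form of the attractors $\widetilde{\mathcal{S}}_{\mu,k}$ produced in Lemma \ref{lemgammat}(b): each is by construction a union of a full $\Gamma_\mu$-orbit of its base piece $\mathcal{S}_{\mu,k}$, hence already $\Gamma_\mu$-invariant rather than merely invariant under a power. Their pull-backs through $\tau_{a,b}^{-1}$ are therefore genuinely $\Psi_{a,b}$-invariant and give the claimed $2^{n-1}$ distinct strongly topologically mixing strange attractors. I anticipate no serious obstacle, since the three ingredients have already been prepared in this subsection; the one item that deserves a moment of attention is the preservation of the exponential derivative growth under $\tau_{a,b}$, but this is immediate because $\tau_{a,b}$ is a nondegenerate linear map composed with a translation.
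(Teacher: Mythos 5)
Your first two paragraphs follow the paper's own (one-line) proof exactly: the proposition is stated there as an immediate consequence of Lemma \ref{lemgammat}, Lemma \ref{leminvpi} and Lemma \ref{lemconjpi}, and your bookkeeping --- $a\in(2^{1/2^{n+1}},2^{1/2^{n}}]$ forces $a^{2}\in(2^{1/2^{n}},2^{1/2^{n-1}}]$, so Lemma \ref{lemgammat} applies with exponent $n-1$ and yields $2^{n-1}$ attractors for $\Gamma_{a^{2}}$ --- is correct, as is the observation that an affine conjugacy preserves all the properties required by Definition \ref{defstratt}.

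The flaw is in your final paragraph. The conjugacy $\tau_{a,b}$ of Lemma \ref{lemconjpi} intertwines $\Gamma_{a^{2}}$ with $\Psi_{a,b}^{2}$, not with $\Psi_{a,b}$. Hence the fact that each $\widetilde{\mathcal{S}}_{a^{2},k}$ is invariant under $\Gamma_{a^{2}}$ (rather than merely under $\Gamma_{a^{2}}^{2^{n-1}}$) buys you only $\Psi_{a,b}^{2}$-invariance of the pull-back; it says nothing about invariance under $\Psi_{a,b}$ itself. You have resolved the wrong ``power'' issue: the one between $\Gamma_{\mu}$ and $\Gamma_{\mu}^{2^{n-1}}$ instead of the one between $\Psi_{a,b}$ and $\Psi_{a,b}^{2}$. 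To finish, one must still pass from $\Psi_{a,b}^{2}$-attractors to $\Psi_{a,b}$-attractors, e.g.\ by replacing each pull-back $A_{k}=\tau_{a,b}^{-1}(\widetilde{\mathcal{S}}_{a^{2},k})$ with $A_{k}\cup\Psi_{a,b}(A_{k})$ (legitimate because $\mathcal{R}_{a,b}$ is $\Psi_{a,b}$-invariant by Lemma \ref{leminvpi}, so $\Psi_{a,b}$ permutes the $\Psi_{a,b}^{2}$-attractors) and then checking that this operation does not identify two distinct $A_{k}$'s --- if $\Psi_{a,b}$ permuted the $A_{k}$ nontrivially, the count of $\Psi_{a,b}$-attractors could drop below $2^{n-1}$. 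That verification is precisely the content separating ``$\Psi_{a,b}$ exhibits $2^{n-1}$ attractors'' from ``$\Psi_{a,b}^{2}$ exhibits $2^{n-1}$ attractors'', and it is missing from your argument (to be fair, the paper glosses over it as well).
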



\section{Renormalization scheme. Proof of Theorem \ref{tmaa}}\label{secren} \

In \cite{PuRoVi} the authors studied a one-parameter family of \textit{EBMs} $ \{ \Lambda_t \}_t $ (see also \eqref{Lambda}). In particular, they proved that there exist three intervals of parameters $ I_3 \subset I_2 \subset I_1 $ such that $ \Lambda_t $ is a $ n $ times renormalizable \textit{EBM} for every $ t \in I_n , \; n=1,2,3 $ (see Main Theorem in \cite{PuRoVi}). Furthermore, for every $ t \in I_n , \; n=1,2,3, $ the map $ \Lambda_t $ is a $ n $ times renormalizable \textit{EBM} in $ 2^{n-1} $ different restrictive domains. In any case, the renormalization scheme requires a change in coordinates in certain triangles $ \Delta_{a,b} $ and $ \Pi_{a,b} $ which are invariant for some power of $ \Lambda_t .$

The aim of this section is to extend this renormalization process to the two-parameter family $ \mathbb{F} .$ To this end, we shall find a set of parameters $ \mathcal{P}_3 \subset \mathcal{P} $ such that, if $ (a,b) \in \mathcal{P}_3 ,$ then $ \Psi_{a,b} $ is renormalizable in $ \mathbb{F} .$ Namely, for each $ (a,b) \in \mathcal{P}_3 ,$ we shall prove that the restriction of $ \Psi_{a,b}^4 $ to each one of two different restrictive domains is conjugate by means of an affine change in coordinates to a \textit{EBM} which belongs to $ \mathbb{F} .$

\subsection{The two restrictive domains}

Let us consider $ \Psi_{a,b} \in \mathbb{F} $ and recall that $ \Psi_{a,b} $ displays a unique fixed point $ P_{a,b}=(x_{a,b},y_{a,b}) \in int(\mathcal{T}) $ (see \eqref{defPab}). Let us denote by $ P^{\prime} $ the symmetric point of $ P_{a,b} $ with respect to $ \mathcal{C} .$ We shall denote by $ \mathcal{L}(Q,m)$ the line passing through a point $Q\in \mathbb{R}^2$ with finite slope $m$ and by $ \mathcal{L}(Q,\infty)$ the vertical line passing through $ Q .$ In this way, we denote by $ Q ,$ $ M $ and $ H_1 $ the intersection between $ \mathcal{C} $ and the lines $ \mathcal{L}(P_{a,b},-1) ,$ $ \mathcal{L}(P_{a,b},0) $ and $ \mathcal{L}(P_{a,b},+1)$ respectively. At this point, we define the triangles $ \Delta_{a,b} $ and $ \Pi_{a,b} ,$ see Figure \ref{figdelpi}, given by
\begin{equation}\label{defdeltapi}
\Delta_{a,b}=\Omega(P_{a,b},P^{\prime},Q) \; , \; \Pi_{a,b}=\Omega(P_{a,b},P^{\prime},H_1) \;.
\end{equation}

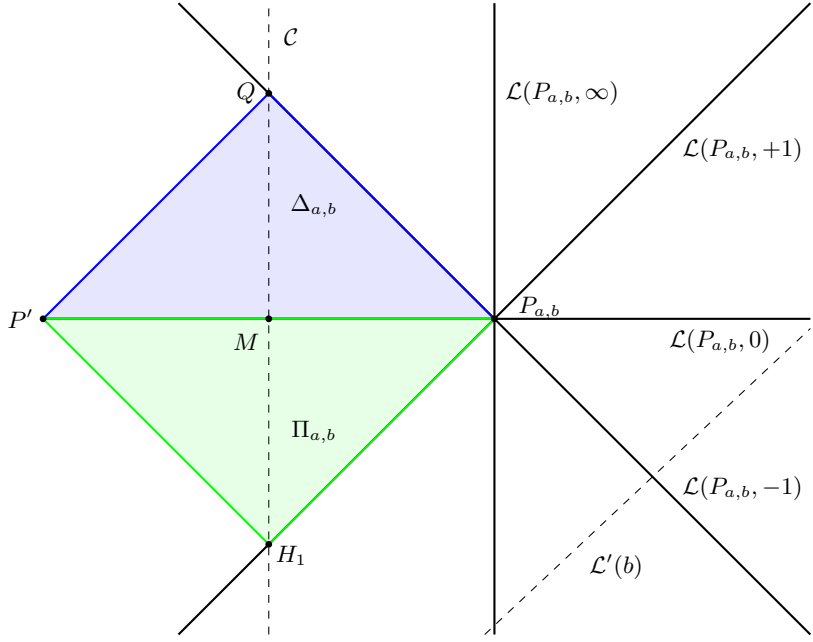
\begin{figure}[!ht]
\centering
\begin{minipage}{0.9 \linewidth}
\centering
{\small
\begin{tikzpicture}[xscale=3,yscale=3]

\draw[black,dashed] (-1.0000,-1.4000) -- (-1.0000,1.4000); 
\node[black] at  (-1.0000+0.1,1.2500) {$ \mathcal{C} $ };
\draw[black,dashed] (-0.0430,-1.4000) -- (1.4000,-0.0430);
\node[black] at  (0.54,-1.1) {$ \mathcal{L}^{\prime}(b) $ }; 
\draw[black,thick] (-2.0000,0.0000) -- (1.4000,0.0000); 
\node[black] at  (1.0000,0.0000-0.1) {$ \mathcal{L}(P_{a,b},0) $};
\draw[black,thick] (-1.4000,-1.4000) -- (1.4000,1.4000); 
\node[black] at  (1.0000+0.1,1.0000-0.25) {$ \mathcal{L}(P_{a,b},+1) $};
\draw[black,thick] (-1.4000,1.4000) -- (1.4000,-1.4000); 
\node[black] at  (1.0000+0.1,-1.0000+0.25) {$ \mathcal{L}(P_{a,b},-1) $};
\draw[black,thick] (0.0000,-1.4000) -- (0.0000,1.4000); 
\node[black] at  (0.0000+0.30,1.0000) {$ \mathcal{L}(P_{a,b},\infty) $};

\fill[blue,opacity=0.10] (0.0000,0.0000) -- (-2.0000,0.0000) -- (-1.0000,1.0000); 
\draw[blue,thick] (0.0000,0.0000) -- (-2.0000,0.0000) -- (-1.0000,1.0000) -- (0.0000,0.0000);
\node[black] at  (-0.8000,0.5000) {$ \Delta_{a,b} $};

\fill[green,opacity=0.10] (0.0000,0.0000) -- (-2.0000,0.0000) -- (-1.0000,-1.0000); 
\draw[green,thick] (0.0000,0.0000) -- (-2.0000,0.0000) -- (-1.0000,-1.0000) -- (0.0000,0.0000); 
\node[black] at  (-0.8000,-0.5000) {$ \Pi_{a,b} $};

\fill[black] (0.0000,0.0000) circle (0.15mm); 
\node[black] at  (0.0000+0.20,0.0000+0.05) {$ P_{a,b} $ }; 
\fill[black] (-2.0000,0.0000) circle (0.15mm); 
\node[black] at  (-2.0000-0.1,0.0000) {$ P^{\prime} $ }; 
\fill[black] (-1.0000,1.0000) circle (0.15mm); 
\node[black] at  (-1.0000-0.1,1.0000) {$ Q $ }; 
\fill[black] (-1.0000,0.0000) circle (0.15mm); 
\node[black] at  (-1.0000-0.1,0.0000-0.1) {$ M $ }; 
\fill[black] (-1.0000,-1.0000) circle (0.15mm); 
\node[black] at  (-1.0000+0.1,-1.0000-0.05) {$ H_1 $ }; 
\end{tikzpicture}
}
\caption{The sets $ \Delta_{a,b} $ and $ \Pi_{a,b} .$ }
\label{figdelpi}
\end{minipage}
\end{figure}

Let us introduce the change in coordinates
\begin{equation}\label{defphi}
(X,Y)=\phi_{a,b}(x,y)=\left( \frac{x-x_{a,b}}{x_{a,b}-1} , \frac{y-y_{a,b}}{x_{a,b}-1} \right).
\end{equation}
In new coordinates, $ P_{a,b} $ transforms into $ \mathcal{O} $ and the distance between $ P_{a,b} $ and $ \mathcal{C} $ is one. Now, the critical segments are given by 
\begin{eqnarray*}
\mathcal{C} & = & \{ (X,Y) \in \phi_{a,b}(\mathcal{T}) : \; X=-1\} \; , \\
\mathcal{L}(b) & = & \{ (X,Y) \in \phi_{a,b}(\mathcal{T}_0) : \; X+Y=2-\gamma_{a,b}\} \; , \\
\mathcal{L}^\prime(b) & = & \{ (X,Y) \in \phi_{a,b}(\mathcal{T}_1) : \; X-Y=\gamma_{a,b}\} \; ,
\end{eqnarray*}
being 
\begin{equation}\label{defgamma}
\gamma_{a,b}= \frac{ab+b-2}{-ab+a+1} \; .
\end{equation}
Note that $ \gamma_{a,b} > 0 $ and it is easy to check that
\begin{equation*}
P^{\prime}=(-2,0) \; , \; Q=(-1,1) \; , \; M=(-1,0) \; , \; H_1=(-1,-1) \; .
\end{equation*}

\begin{remark}\label{remaM4}
In new coordinates $ (X,Y) ,$ while the orbit of a point does not leave the region 
\begin{equation*}
\mathcal{T}_1^+ = \{(X,Y) \in \mathcal{T} : X \geq -1 \mbox{ , } X-Y \leq \gamma_{a,b} \} \; ,
\end{equation*} 
one may easily calculate its iterates by $ \Psi_{a,b}.$ In fact, the action of $ \Psi_{a,b} $ on this region consists in a rotation (centered at $P_{a,b}$) by an angle $ \frac{\pi}{2} $ and an expansion by a factor $ a .$ 
\end{remark}


\subsection{Renormalization in $ \Delta_{a,b} $}

In order to simplify the notation, we shall take coordinates $ (X,Y).$ Then, 
\begin{equation*}
P_{a,b}=P=(0,0) \; , \; \Delta_{a,b}=\Delta=\Omega(P,P^{\prime},Q).
\end{equation*}  
For any point $ A \in \mathbb{R}^2 $ we shall write $ A_1=\Psi_{a,b}(A) $ and, in general, $A_i=\Psi_{a,b}(A_{i-1}).$ 

From now on, it will be very useful Figure \ref{figdelta0r2}. If we denote by $\Delta_{i}=\Psi_{a,b}^i(\Delta), \ i=0 \ldots 4 ,$ and recalling that $ \Psi_{a,b} $ is symmetric with respect to the critical lines one has 
\begin{equation*}
\Delta_1=\Psi_{a,b}(\Delta)=\Psi_{a,b}(\Omega(P,Q,M))=\Omega(P,Q_1,M_1) 
\end{equation*}
where $ Q_1=(-a,-a) $ and $ M_1=(0,-a).$

Since $ a > 1, $ the critical line $ \mathcal{C} $ always cuts the interior of $ \Delta_1 $ in points
\begin{equation*}
H_1=(-1,-1) \mbox{ , } K_1=(-1,-a) \; .
\end{equation*}
We shall impose that $ M_1 $ belongs to $ \mathcal{T}_1^+ ,$ i.e., $ a \leq \gamma_{a,b} $ or, equivalently,
\begin{equation}\label{condel1} 
b \geq \frac{2 + a + a^2}{1 + a + a^2}
\end{equation} %

Now, 
\begin{equation*}
\Delta_2=\Psi_{a,b}(\Omega(P,H_1,K_1,M_1))=\Omega(P,H_2,K_2,M_2) \; .
\end{equation*}
We shall impose that $ H_2=(a,-a) $ and $ M_2=(a^2,0) $ belong to the regions $ \mathcal{T}_1^- $ and $ \mathcal{T}_1^+ $ respectively. These conditions are respectively given by $ 2a \geq \gamma_{a,b} $ and $ a^2 \leq \gamma_{a,b} $ or, equivalently,
\begin{equation}\label{condel2}
b \leq \frac{(2(1 + a + a^2)}{1 + a + 2 a^2}
\end{equation}
and 
\begin{equation}\label{condel3}
b \geq \frac{2 + a^2 + a^3}{1 + a + a^3}.
\end{equation}
In this case, the critical line $ \mathcal{L}^{\prime}(b) $ cuts the interior of $ \Delta_2 $ at points $ \widetilde{H}_2 $ and $ \widetilde{K}_2 $ given by 
\begin{equation*} 
\widetilde{H}_2 = \left( \frac{\gamma_{a,b}}{2},-\frac{\gamma_{a,b}}{2} \right) \mbox{ , } \widetilde{K}_2 =(a^2,a^2-\gamma_{a,b}) \; .
\end{equation*}

In this way, 
\begin{equation*}
\Delta_3=\Psi_{a,b}(\Omega(P, \widetilde{H}_2 , \widetilde{K}_2 , M_2))=\Omega(P, \widetilde{H}_3 , \widetilde{K}_3 , M_3) \; .
\end{equation*}
Since $\Delta_3$ does not intersect the critical set, then
\begin{equation*}
\Delta_4=\Psi_{a,b}(\Omega(P, \widetilde{H}_3 , \widetilde{K}_3 , M_3))=\Omega(P, \widetilde{H}_4 , \widetilde{K}_4 , M_4) \; .
\end{equation*}
being $ \widetilde{H}_4=(-a^2\frac{\gamma_{a,b}}{2},a^2\frac{\gamma_{a,b}}{2}) $ and $ M_4=(-a^4,0) .$ 

In order to determine the set of parameters for which $ \Delta_4=\Psi_{a,b}^4(\Delta) \subset \Delta,$ we need to impose that $ \widetilde{H}_4 \in \overline{PQ} ,$ i.e., $ a^2 \gamma_{a,b} \leq 2 $ or, equivalently,
\begin{equation}\label{condel4}
b \leq \frac{2(1 + a + a^2)}{a (2 + a + a^2)} \; .
\end{equation}

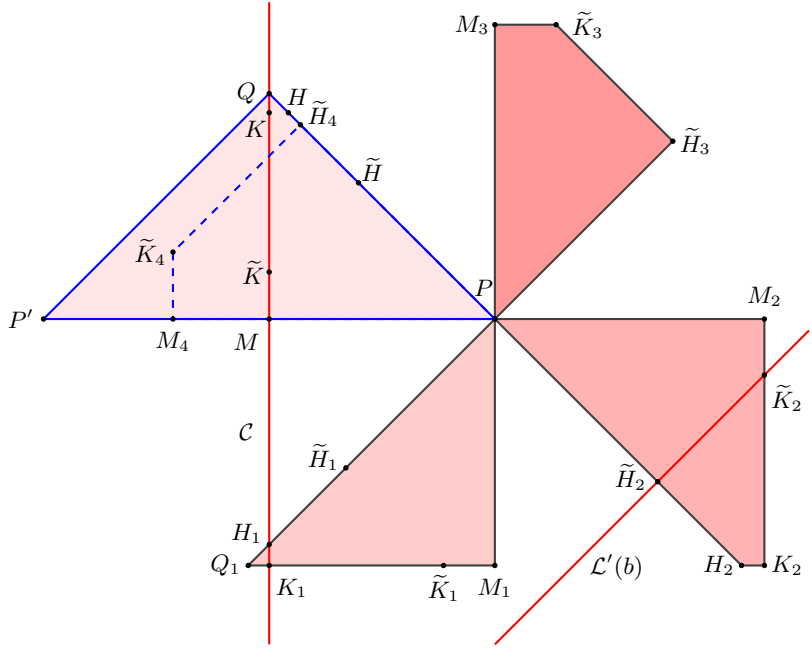
\begin{figure}[!ht]
\centering
\begin{minipage}{0.9 \linewidth}
\centering
{\small
\begin{tikzpicture}[xscale=3,yscale=3]

\fill[red,opacity=0.10] (0.0000,0.0000) -- (-2.0000,0.0000) -- (-1.0000,1.0000); 
\draw[red,thick] (-1.0000,-1.4430) -- (-1.0000,1.4053); 
\draw[red,thick] (1.3919,-0.0510) -- (0.0000,-1.4430);
\draw[blue,thick] (0.0000,0.0000) -- (-2.0000,0.0000) -- (-1.0000,1.0000) -- (0.0000,0.0000); 
\draw[blue,thick,dashed] (0.0000,0.0000) -- (-1.4265,0.0000) -- (-1.4265,0.2969) -- (-0.8617,0.8617) -- (0.0000,0.0000); 

\fill[black] (-2.0000,0.0000) circle (0.12mm); 
\node[black] at  (-2.0000-0.1,0.0000) {$ P^{\prime} $ }; 
\fill[black] (-1.0000,1.0000) circle (0.12mm); 
\node[black] at  (-1.0000-0.1,1.0000) {$ Q $ }; 
\fill[black] (-1.0000,0.0000) circle (0.12mm); 
\node[black] at  (-1.0000-0.1,0.0000-0.1) {$ M $ }; 
\fill[black] (-0.9150,0.9150) circle (0.12mm); 
\node[black] at  (-0.9150+0.04,0.9150+0.07) {$ H $ }; 
\fill[black] (-1.0000,0.9150) circle (0.12mm); 
\node[black] at  (-1.0000-0.06,0.9150-0.07) {$ K $ }; 
\fill[black] (-0.6041,0.6041) circle (0.12mm); 
\node[black] at  (-0.6041+0.06,0.6041+0.06) {$ \widetilde{H} $ }; 
\fill[black] (-1.0000,0.2082) circle (0.12mm); 
\node[black] at  (-1.0000-0.07,0.2082) {$ \widetilde{K} $ }; 

\fill [red,opacity=0.20] (0.0000,0.0000) -- (0.0000,-1.0929) -- (-1.0929,-1.0929); 
\draw [darkgray,thick] (0.0000,0.0000) -- (0.0000,-1.0929) -- (-1.0929,-1.0929) -- (0.0000,0.0000); 
\fill[black] (-1.0929,-1.0929) circle (0.12mm); 
\node[black] at  (-1.0929-0.1,-1.0929) {$ Q_1 $ }; 
\fill[black] (0.0000,-1.0929) circle (0.12mm); 
\node[black] at  (0.0000,-1.0929-0.1) {$ M_1 $ }; 
\fill[black] (-1.0000,-1.0000) circle (0.12mm); 
\node[black] at  (-1.0000-0.1,-1.0000+0.05) {$ H_1 $ }; 
\fill[black] (-1.0000,-1.0929) circle (0.12mm); 
\node[black] at  (-1.0000+0.1,-1.0929-0.1) {$ K_1 $ }; 
\fill[black] (-0.6602,-0.6602) circle (0.12mm); 
\node[black] at  (-0.6602-0.1,-0.6602+0.05) {$ \widetilde{H}_1 $ }; 
\fill[black] (-0.2275,-1.0929) circle (0.12mm); 
\node[black] at  (-0.2275,-1.0929-0.1) {$ \widetilde{K}_1 $ }; 

\fill [red,opacity=0.30] (0.0000,0.0000) -- (1.1944,0.0000) -- (1.1944,-1.0929) -- (1.0929,-1.0929);
\draw [darkgray,thick] (0.0000,0.0000) -- (1.1944,0.0000) -- (1.1944,-1.0929) -- (1.0929,-1.0929) -- (0.0000,0.0000); 
\fill[black] (1.1944,0.0000) circle (0.12mm); 
\node[black] at  (1.1944,0.0000+0.1) {$ M_2 $ }; 
\fill[black] (1.0929,-1.0929) circle (0.12mm); 
\node[black] at  (1.0929-0.1,-1.0929) {$ H_2 $ }; 
\fill[black] (1.1944,-1.0929) circle (0.12mm); 
\node[black] at  (1.1944+0.1,-1.0929) {$ K_2 $ }; 
\fill[black] (0.7215,-0.7215) circle (0.12mm); 
\node[black] at  (0.7215-0.12,-0.7215+0.02) {$ \widetilde{H}_2 $ }; 
\fill[black] (1.1944,-0.2486) circle (0.12mm); 
\node[black] at  (1.1944+0.1,-0.2486-0.1) {$ \widetilde{K}_2 $ }; 

\fill [red,opacity=0.40] (0.0000,0.0000) -- (0.0000,1.3053) -- (0.2717,1.3053) -- (0.7885,0.7885) -- (0.0000,0.0000); 
\draw [darkgray,thick] (0.0000,0.0000) -- (0.0000,1.3053) -- (0.2717,1.3053) -- (0.7885,0.7885) -- (0.0000,0.0000); 
\fill[black] (0.0000,1.3053) circle (0.12mm); 
\node[black] at  (0.0000-0.1,1.3053) {$ M_3 $ }; 
\fill[black] (0.7885,0.7885) circle (0.12mm); 
\node[black] at  (0.7885+0.1,0.7885) {$ \widetilde{H}_3 $ }; 
\fill[black] (0.2717,1.3053) circle (0.12mm); 
\node[black] at  (0.2717+0.14,1.3053) {$ \widetilde{K}_3 $ }; 

\fill[black] (-1.4265,0.0000) circle (0.12mm); 
\node[black] at  (-1.4265,0.0000-0.1) {$ M_4 $ }; 
\fill[black] (-0.8617,0.8617) circle (0.12mm); 
\node[black] at  (-0.8617+0.10,0.8617+0.05) {$ \widetilde{H}_4 $ }; 
\fill[black] (-1.4265,0.2969) circle (0.12mm); 
\node[black] at  (-1.4265-0.1,0.2969) {$ \widetilde{K}_4 $ }; 

\fill[black] (0.0000,0.0000) circle (0.12mm); 
\node[black] at  (0.0000-0.05,0.0000+0.14) {$ P $ }; 

\node[black] at  (-1.0000-0.1,-0.5) {$ \mathcal{C} $ }; 
\node[black] at  (0.54,-1.1) {$ \mathcal{L}^{\prime}(b) $ }; 

\end{tikzpicture}
}
\caption{The iterates of $ \Delta_0:$ encircled in blue, the triangle $ \Delta_0 ;$ encircled in a dashed blue line, the set $ \Delta_4 .$}
\label{figdelta0r2}
\end{minipage}
\end{figure}

Since $ a > 1 ,$ it is clear that condition \eqref{condel3} implies condition \eqref{condel1}. On the other hand, condition \eqref{condel4} implies condition \eqref{condel2}. Hence, see Figure \ref{figsetPdelta}, we define the set of parameters
\begin{equation}\label{defPdelta}
\mathcal{P}_{\Delta}= \left\{(a,b) \in \mathcal{P} : \; \frac{2 + a^2 + a^3}{1 + a + a^3} \leq b \leq \frac{2(1 + a + a^2)}{a (2 + a + a^2)} \right\}.
\end{equation}


\begin{figure}[!ht]
\centering
\begin{minipage}{1 \linewidth}
\centering
\includegraphics[width=0.75\textwidth]{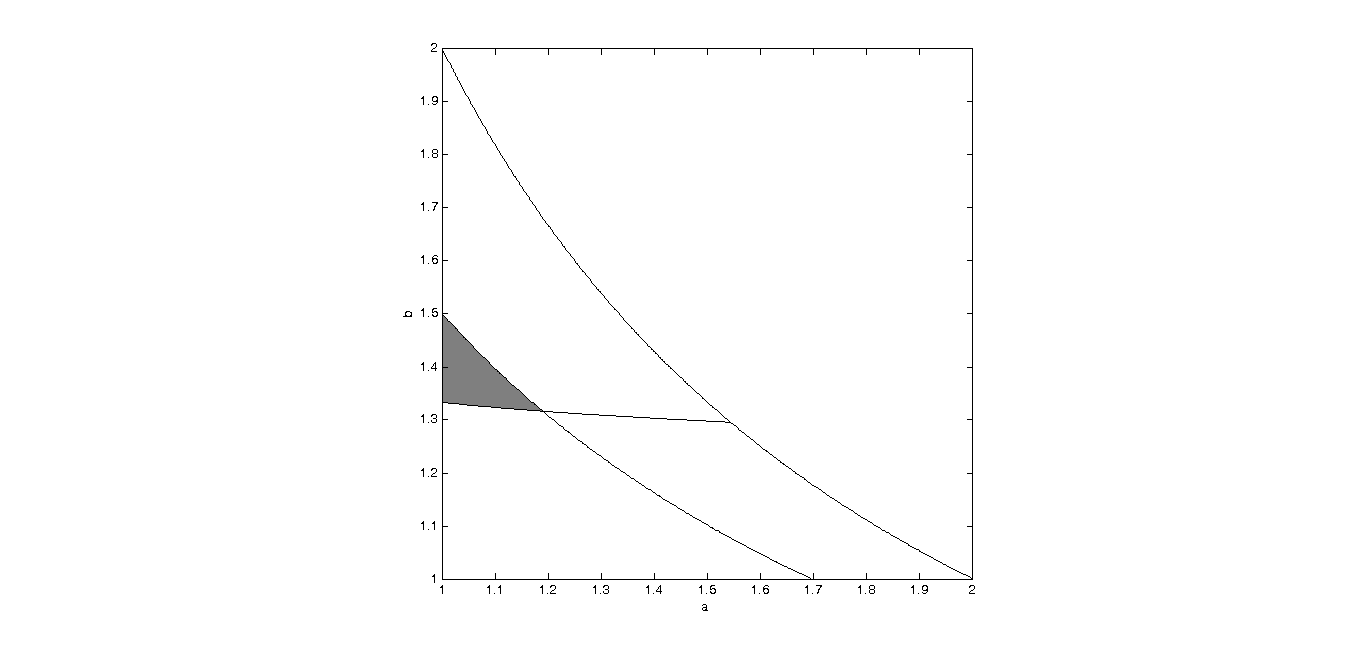}
\caption{The set of parameters $ \mathcal{P}_{\Delta} .$}
\label{figsetPdelta}
\end{minipage}
\end{figure}

\begin{proposition}\label{prodel}
The domain $ \Delta_{a,b} $ is invariant by $ \Psi_{a,b}^4 $ if and only if $ (a,b) \in \mathcal{P}_{\Delta} .$ Furthermore, for every $ (a,b) \in \mathcal{P}_{\Delta} $ the map $ \Psi_{a,b} $ is renormalizable in $ \Delta_{a,b} .$ Namely, $ \Psi_{a,b}^4 $ is conjugate by means of an affine change in coordinates $ \widetilde{\phi}_{a,b} $ to the map
\begin{equation}\label{defpsitil}
\widetilde{\Psi}_{a^{\prime},b^{\prime}} = EBM(\mathcal{T},\mathcal{C},\mathcal{H},\mathcal{L}(b^{\prime}),\mathcal{O},B_{a^{\prime}} )
\end{equation}
being $ a^{\prime}=a^4 ,$ $ b^{\prime}=a^{-2}\gamma_{a,b} ,$ $ B_{a^{\prime}} $ the matrix introduced in \eqref{defBa} and $ \mathcal{H} $ the segment in $ \mathcal{T}_0^+ $ given by $ y=a^{-1}.$
\end{proposition}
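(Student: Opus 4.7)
The proof plan splits into two main tasks. First, I would establish that $\mathcal{P}_{\Delta}$ is exactly the set of parameters for which $\Delta_{a,b}$ is $\Psi_{a,b}^4$-invariant. Sufficiency was already implicit in the preceding discussion: tracking the vertices of the successive iterates $\Delta_i = \Psi_{a,b}^i(\Delta_{a,b})$, $i=0,\ldots,4$, conditions (\ref{condel1})--(\ref{condel4}) were derived precisely so that at each step exactly one piece of $\Psi_{a,b}$ (up to a single fold) acts on the relevant set. Since (\ref{condel3}) implies (\ref{condel1}) and (\ref{condel4}) implies (\ref{condel2}) for $a>1$, only the former two survive in the definition of $\mathcal{P}_{\Delta}$. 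For necessity one argues that if (\ref{condel3}) fails then $M_2 = \Psi_{a,b}^2(M)$ escapes $\mathcal{T}_1^+$, so $\Delta_3$ is split by an additional critical line and $\Delta_4$ generically leaves $\Delta_{a,b}$; and if (\ref{condel4}) fails then the vertex $\widetilde{H}_4$ already sits outside $\overline{PQ}$.

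Second, I would construct the conjugating affine change of coordinates. In the intermediate coordinates of (\ref{defphi}), $\Delta_{a,b}$ has vertices $P=(0,0)$, $P'=(-2,0)$, $Q=(-1,1)$, so I define $\widetilde{\phi}_{a,b}(X,Y) = (-X,Y)$, the unique affine isomorphism taking $\Delta_{a,b}$ onto $\mathcal{T} = \Omega(\mathcal{O},(1,1),(2,0))$. The central calculation is to identify the lines across which $\Psi_{a,b}^4|_{\Delta_{a,b}}$ fails to be smooth. Using Remark \ref{remaM4}, one pulls the critical sets back and obtains three segments inside $\Delta_{a,b}$: $\overline{QM}$ on $\{X=-1\}$ (the direct critical set $\mathcal{C}$), $\overline{HK}$ on $\{Y=1/a\}$ (the $\Psi_{a,b}$-preimage of $\mathcal{C}\cap\Delta_1$), and $\overline{\widetilde{H}\widetilde{K}}$ on $\{Y-X = \gamma_{a,b}/a^2\}$ (the $\Psi_{a,b}^2$-preimage of $\mathcal{L}'(b)\cap\Delta_2$, which turns out to have slope $+1$). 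The map $\widetilde{\phi}_{a,b}$ sends these respectively to $\mathcal{C}$, $\mathcal{H} = \{y=1/a\}$, and $\mathcal{L}(b')$ with $b' = \gamma_{a,b}/a^2$, producing exactly the fold structure of the target three-fold EBM in (\ref{defpsitil}).

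To close the conjugacy I would verify the equality $\widetilde{\phi}_{a,b}\circ \Psi_{a,b}^4\circ \widetilde{\phi}_{a,b}^{-1} = \widetilde{\Psi}_{a^4,b'}$ on each smooth piece. On the innermost piece $\Omega(P,\widetilde{H},\widetilde{K},M)$ —where no fold is triggered across the four iterations— Remark \ref{remaM4} guarantees that $\Psi_{a,b}^4$ is the composition of four rotations by $\pi/2$ and dilation by $a^4$, hence the linear map $(X,Y)\mapsto (a^4 X, a^4 Y)$; conjugation by $\widetilde{\phi}_{a,b}$ turns this into $B_{a^4}$, matching the linear part of the target. On each of the remaining pieces, $\Psi_{a,b}^4$ differs from this base value by the reflections that the iterated dynamics performs at the three fold steps identified above; after conjugation these become precisely the folds $\mathcal{F}_{\mathcal{C}}$, $\mathcal{F}_{\mathcal{H}}$, $\mathcal{F}_{\mathcal{L}(b')}$ on $\mathcal{T}$.

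The main obstacle is the piecewise bookkeeping in this last step: one must verify that the order in which folds are triggered during the iterated dynamics of $\Psi_{a,b}$ on $\Delta_{a,b}$ corresponds exactly, after the change $\widetilde{\phi}_{a,b}$, to the ordered composition $B_{a^4}\circ\mathcal{F}_{\mathcal{L}(b')}\circ\mathcal{F}_{\mathcal{H}}\circ\mathcal{F}_{\mathcal{C}}$ prescribed by the EBM definition. Once the three fold lines have been correctly located and matched under $\widetilde{\phi}_{a,b}$ as above, the verification reduces to a mechanical case analysis on the (at most) eight smooth pieces of $\Delta_{a,b}$ cut out by those three lines, together with a check that the nested good-fold conditions required by Definition \ref{defEBM} are satisfied for the triple $(\mathcal{C},\mathcal{H},\mathcal{L}(b'))$ on $\mathcal{T}$, which follows automatically from $(a,b)\in \mathcal{P}_{\Delta}$.
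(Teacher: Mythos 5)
Your proposal is correct and follows essentially the same route as the paper: invariance is obtained by tracking the vertices through conditions \eqref{condel1}--\eqref{condel4} (with necessity checked on the two surviving boundary inequalities), the conjugacy is $h\circ\phi_{a,b}$ with $h(X,Y)=(-X,Y)$, and the three fold lines are located exactly as in the paper's proof, namely $\mathcal{C}$, the preimage $\overline{HK}$ of $\mathcal{C}\cap\Delta_1$ on $Y=a^{-1}$, and the second preimage $\overline{\widetilde{H}\widetilde{K}}$ of $\mathcal{L}^{\prime}(b)\cap\Delta_2$ on $Y-X=a^{-2}\gamma_{a,b}$, with $\Psi_{a,b}^4$ acting as the homothecy of ratio $a^4$ on the innermost piece $\Omega(P,\widetilde{H},\widetilde{K},M)$. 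The only point to tighten is the necessity of \eqref{condel3}, where ``$\Delta_4$ generically leaves $\Delta_{a,b}$'' should be replaced by the paper's explicit computation that $M_4=(-a^2\gamma_{a,b},-a^2(a^2-\gamma_{a,b}))$ has negative second coordinate and hence lies outside $\Delta_{a,b}$.
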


\begin{proof}
As a consequence of the construction of $ \mathcal{P}_{\Delta} ,$ if $ (a,b) \in \mathcal{P}_{\Delta} $ then $ \Delta_{a,b} $ is invariant by $ \Psi_{a,b}^4 .$ 

On the other hand, if $ b > \frac{2(1 + a + a^2)}{a (2 + a + a^2)} $ then $ \widetilde{H}_4 \notin \overline{PQ} $ and it is evident that $ \Delta_{a,b} $ is not invariant by $ \Psi_{a,b}^4 .$ Finally, if $ b < \frac{2 + a^2 + a^3}{1 + a + a^3} $ then $ a^2 > \gamma_{a,b} $ and thus $ M_2 \in \mathcal{T}_1^- $ and 
\begin{equation*}
M_4=(-a^2\gamma_{a,b},-a^2(a^2-\gamma_{a,b})) 
\end{equation*}
does not belong to $ \Delta_4 .$

Now, we shall prove that $ \Psi_{a,b}^4 $ is conjugate to the map $ \widetilde{\Psi}_{a^{\prime},b^{\prime}} $ given in \eqref{defpsitil}.

Let us define the points $ H $ and $ K $ in $ \Delta_{a,b} $ such that $ \Psi_{a,b}(H)=H_1 $ and $ \Psi_{a,b}(K)=K_1 $ (see Figure \ref{figdelta0r2}). In coordinates $ (X,Y) ,$ these points are given by
\begin{equation*} 
H = \left( -\frac{1}{a},\frac{1}{a} \right) \mbox{ , } K = \left( -1,\frac{1}{a} \right) \; .
\end{equation*}

We may also define the points $ \widetilde{H} $ and $ \widetilde{K} $ in $ \Delta_{a,b} $ such that $ \Psi_{a,b}^2(\widetilde{H})=\widetilde{H}_2 $ and $ \Psi_{a,b}^2(\widetilde{K})=\widetilde{K}_2 .$ These points are given by, see Figure \ref{figdelta0r2},
\begin{equation*} 
\widetilde{H}=\left(-\frac{\gamma_{a,b}}{2a^2},\frac{\gamma_{a,b}}{2a^2} \right) \mbox{ , } \widetilde{K}=(-1,-(1-\frac{\gamma_{a,b}}{a^2})) 
\end{equation*}
in such a way that $ \Delta_4 = \Psi_{a,b}^4(\Delta_0) $ where $ \Delta_0=\Omega(P,\widetilde{H},\widetilde{K},M) .$

Moreover, since $ \Psi_{a,b}^k(\Delta_0) $ does not leave the region 
\begin{equation*} 
\mathcal{T}_1^+ = \{ (X,Y) : \ X > - 1 \mbox{ , } X-Y < \gamma_{a,b} \} 
\end{equation*}
for $ k=0,\ldots,3 $ we have that $ \Psi_{a,b | \Delta_0}^4 $ is the homothecy with expansion rate $ a^4 .$

Therefore, after the change in coordinates $ \widetilde{\phi}_{a,b} = h \circ \phi_{a,b} ,$ where $ h(x,y)=(-X,Y) ,$ the invariant domain $ \Delta_{a,b} $ transforms into our well-known domain $ \mathcal{T} .$ Furthermore, $ \Psi_{a,b}^4 $ coincides with $ EBM(\mathcal{T},\mathcal{C},\mathcal{H},\mathcal{L}(b^{\prime}),\mathcal{O},B_{a^{\prime}}) $ where $ a^{\prime}=a^4 ,$ $ b^{\prime}=a^{-2}\gamma_{a,b} $ and $ \mathcal{H} $ is the segment in $ \mathcal{T}_0^+ $ given by $ y=a^{-1}.$
\end{proof}

\begin{remark}\label{remadel}
Let us note that, given $ (a,b) \in \mathcal{P}_{\Delta} ,$ then $ (a^{\prime},b^{\prime}) \in \mathcal{P} .$
\end{remark}

From the construction of $ \Delta_4 $ it is easy to check that $ \Psi_{a,b}^4(\Delta_4)=\Delta_4 $ (see Figure \ref{figdelta0r2}). Let us denote by $ \widetilde{\Delta}_{a,b} $ the set $ \phi_{a,b}^{-1}(\Delta_4) $ where $ \phi_{a,b} $ is the change in coordinates given in \eqref{defphi}. Then, $ \widetilde{\Delta}_{a,b} \subset \Delta_{a,b} $ satisfies
\begin{equation}\label{defdelp}
\widetilde{\Delta}_{a,b}=\Omega(P_{a,b},  \widetilde{H}_4 , \widetilde{K}_4 , M_4).
\end{equation}

We shall now impose that the point $ \widetilde{H}_4 $ belongs to the segment $ \overline{PH} .$ This is equivalent to $ a^3\gamma_{a,b} < 2 ,$ i.e.,
\begin{equation}\label{condel5} 
b \leq \frac{2(1+a+a^3)}{a(2+a^2+a^3)}.
\end{equation}
Therefore, we define, see Figure \ref{figPdelp}, the set of parameters 
\begin{eqnarray}
\mathcal{P}_{\Delta}^{\prime} & = & \{ (a,b) \in \mathcal{P}_{\Delta} : \; a^3 \gamma_{a,b} < 2 \} \nonumber \\
& = & \left\{(a,b) \in \mathcal{P} : \; \frac{2 + a^2 + a^3}{1 + a + a^3} \leq b \leq \frac{2(1+a+a^3)}{a(2+a^2+a^3)} \right\} .
\label{defPdeltap}
\end{eqnarray}

\begin{figure}[!ht]
\centering
\begin{minipage}{1 \linewidth}
\centering
\includegraphics[width=0.75\textwidth]{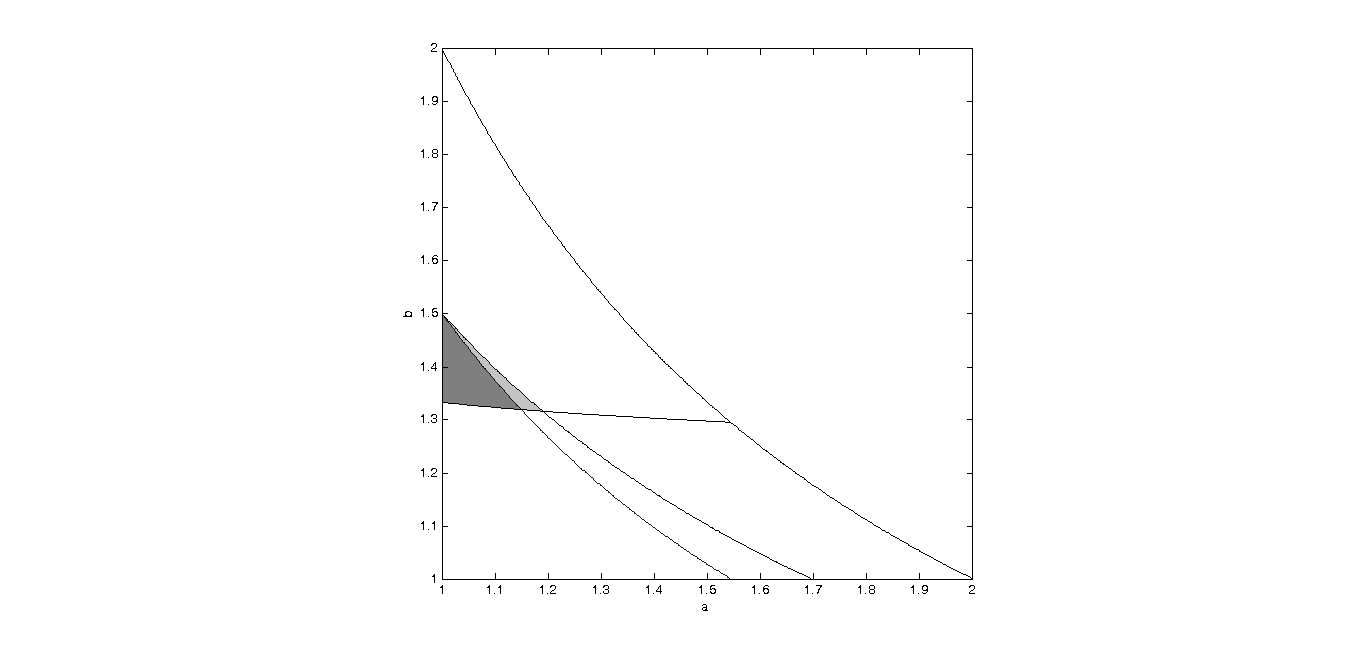}
\caption{The sets of parameters $ \mathcal{P}_{\Delta}^{\prime} $ (dark grey) and $ \mathcal{P}_{\Delta} $ (pale grey).}
\label{figPdelp}
\end{minipage}
\end{figure}

Thus, the following result is consequence of the construction of $ \mathcal{P}_{\Delta}^{\prime} $ and Proposition \ref{prodel}.
 
\begin{proposition}\label{prodelp}
For every $ (a,b) \in \mathcal{P}_{\Delta}^{\prime} $ the domain $ \widetilde{\Delta}_{a,b} $ is invariant by $ \Psi_{a,b}^4 .$ Furthermore, for every $ (a,b) \in \mathcal{P}_{\Delta}^{\prime} $ the map $ \Psi_{a,b} $ is renormalizable in $ \widetilde{\Delta}_{a,b} .$ Namely, $ \Psi_{a,b}^4 $ is conjugate by means of an affine change in coordinates $ \widetilde{\phi}_{a,b} $ to the map
\begin{equation}\label{defpsip}
\Psi_{a^{\prime},b^{\prime}} = EBM(\mathcal{T},\mathcal{C},\mathcal{L}(b^{\prime}),\mathcal{O},B_{a^{\prime}} )
\end{equation}
restricted to $ \Psi_{a^{\prime},b^{\prime}}(\mathcal{T}) ,$ being $ a^{\prime}=a^4 ,$ $ b^{\prime}=a^{-1}\gamma_{a,b} $ and $ B_{a^{\prime}} $ the matrix introduced in \eqref{defBa}. 
\end{proposition}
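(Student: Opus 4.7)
The plan is to build directly on Proposition \ref{prodel} and exploit the stronger condition $a^{3}\gamma_{a,b}<2$ that defines $\mathcal{P}_{\Delta}^{\prime}$. Proposition \ref{prodel} already tells us that $\Psi_{a,b}^{4}|_{\Delta_{a,b}}$ is affinely conjugate to a three-fold \textit{EBM} on $\mathcal{T}$ whose middle fold is the segment $\mathcal{H}:y=a^{-1}$ in $\mathcal{T}_{0}^{+}$. The key point I want to establish is that, after restricting to $\widetilde{\Delta}_{a,b}\subset\Delta_{a,b}$, this middle fold becomes inactive, so the renormalized map is in fact a two-fold \textit{EBM} belonging to $\mathbb{F}$.

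To prove invariance of $\widetilde{\Delta}_{a,b}$ under $\Psi_{a,b}^{4}$, I would proceed exactly as in Proposition \ref{prodel}: write $\widetilde{\Delta}_{a,b}=\phi_{a,b}^{-1}(\Delta_{4})$ and verify $\Psi_{a,b}^{4}(\widetilde{\Delta}_{a,b})\subseteq\widetilde{\Delta}_{a,b}$ by iterating the four vertices $P_{a,b}$, $\widetilde{H}_{4}$, $\widetilde{K}_{4}$, $M_{4}$. Remark \ref{remaM4} identifies $\Psi_{a,b}$ on $\mathcal{T}_{1}^{+}$ with the rotation-expansion $(X,Y)\mapsto(-aY,aX)$, so the check reduces to verifying that these vertices and their intermediate iterates remain in $\mathcal{T}_{1}^{+}$. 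The condition $a^{3}\gamma_{a,b}<2$ --- geometrically, the requirement that $\widetilde{H}_{4}$ lie on the segment $\overline{PH}$ with $H=(-1/a,1/a)$ --- is exactly what keeps the full orbit of $\widetilde{\Delta}_{a,b}$ inside $\mathcal{T}_{1}^{+}$, so $\Psi_{a,b}^{4}$ acts as a dilation about $P_{a,b}$ on $\widetilde{\Delta}_{a,b}$ and invariance follows.

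For the conjugation I would combine Proposition \ref{prodel} with the following observation: once we restrict to $\widetilde{\Delta}_{a,b}$, the conjugate target becomes the three-fold \textit{EBM} restricted to its image $\widetilde{\phi}_{a,b}(\widetilde{\Delta}_{a,b})$. Under the condition $a^{3}\gamma_{a,b}<2$, the maximum $y$-coordinate of this image is bounded by $a^{-1}$, so the middle fold $\mathcal{H}$ is inactive on it and the three-fold dynamics agree with those of a two-fold \textit{EBM} $\Psi_{a^{\prime},b^{\prime}}\in\mathbb{F}$ on the same set. An explicit vertex-by-vertex comparison (possibly absorbing an additional affine adjustment into $\widetilde{\phi}_{a,b}$) then pins down $a^{\prime}=a^{4}$, $b^{\prime}=a^{-1}\gamma_{a,b}$, and $\widetilde{\phi}_{a,b}(\widetilde{\Delta}_{a,b})=\Psi_{a^{\prime},b^{\prime}}(\mathcal{T})$, giving the claimed renormalization.

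The main obstacle I expect is precisely this last bookkeeping step: after the middle fold has been eliminated, one must match the image polygon $\widetilde{\phi}_{a,b}(\widetilde{\Delta}_{a,b})$ to $\Psi_{a^{\prime},b^{\prime}}(\mathcal{T})$ vertex by vertex, and this matching simultaneously fixes the value of $b^{\prime}=a^{-1}\gamma_{a,b}$ and the precise form of the affine conjugacy used in this statement.
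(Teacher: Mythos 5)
Your key observation for the conjugacy part --- that on the restricted domain the middle fold $\mathcal{H}$ of Proposition \ref{prodel} becomes inactive, so the three-fold \textit{EBM} collapses to a two-fold member of $\mathbb{F}$ --- is correct, and it is precisely the content the paper leaves implicit (its own ``proof'' is the single remark that the result follows from the construction of $\mathcal{P}_{\Delta}^{\prime}$ and Proposition \ref{prodel}). Indeed the highest point of $\Delta_{4}$ is $\widetilde{H}_{4}$, at height $\tfrac{1}{2}a^{2}\gamma_{a,b}$, and condition \eqref{condel5} says exactly that this is below $a^{-1}$; since the fold $\mathcal{F}_{\mathcal{C}}$ preserves the second coordinate, the fold along $\mathcal{H}$ acts as the identity on the folded image of $\Delta_{4}$. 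However, two of your steps fail.

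First, the invariance argument. You claim that $a^{3}\gamma_{a,b}<2$ keeps the whole forward orbit of $\widetilde{\Delta}_{a,b}$ inside $\mathcal{T}_{1}^{+}$, so that $\Psi_{a,b}^{4}$ acts on it as a dilation and invariance follows. This cannot be right: a dilation of ratio $a^{4}>1$ admits no bounded invariant set with nonempty interior, and in fact two of the three folds do act nontrivially on $\Delta_{4}$ (its vertex $M_{4}=(-a^{4},0)$ lies beyond $\mathcal{C}$, and its edge $\overline{\widetilde{H}_{4}\widetilde{K}_{4}}$ lies on $Y-X=a^{2}\gamma_{a,b}$, beyond the fold line $Y-X=a^{-2}\gamma_{a,b}$). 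Only the smaller quadrilateral $\Delta_{0}=\Omega(P,\widetilde{H},\widetilde{K},M)$ has all four iterates in $\mathcal{T}_{1}^{+}$. The invariance you need is in fact immediate and requires nothing beyond $(a,b)\in\mathcal{P}_{\Delta}$: since $\widetilde{\Delta}_{a,b}=\Psi_{a,b}^{4}(\Delta_{a,b})$ and $\Psi_{a,b}^{4}(\Delta_{a,b})\subset\Delta_{a,b}$ by Proposition \ref{prodel}, one gets $\Psi_{a,b}^{4}(\widetilde{\Delta}_{a,b})\subset\Psi_{a,b}^{4}(\Delta_{a,b})=\widetilde{\Delta}_{a,b}$. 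The condition $a^{3}\gamma_{a,b}<2$ is needed only so that the renormalized map belongs to $\mathbb{F}$, not for invariance.

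Second, the value of $b^{\prime}$. If you actually carry out the vertex matching you postpone to the end, you will obtain $b^{\prime}=a^{-2}\gamma_{a,b}$, not $a^{-1}\gamma_{a,b}$: the two surviving folds of $\Psi_{a,b}^{4}$ on $\widetilde{\Delta}_{a,b}$ are the orthogonal reflections about $X=-1$ and $Y-X=a^{-2}\gamma_{a,b}$, and any affine conjugacy that commutes with the homothety $B_{a^{4}}$ and carries orthogonal folds about two non-perpendicular, non-parallel lines to orthogonal folds must be a similarity; normalizing the distance to $\mathcal{C}$ to $1$ forces it to be an isometry, so the distance to the second fold line --- hence $b^{\prime}$ --- cannot be changed, and no ``additional affine adjustment'' can produce $a^{-1}\gamma_{a,b}$. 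The value $a^{-1}\gamma_{a,b}$ printed in the statement conflicts with \eqref{defHdel}, with statement i) of Theorem \ref{tmaa} and with Remark \ref{remaP3Hdel}, all of which use $H_{\Delta}(a,b)=(a^{4},a^{-2}\gamma_{a,b})$; it is evidently a misprint (it is the exponent belonging to the $\Pi$-renormalization of Proposition \ref{propi}), and your argument should be aimed at $b^{\prime}=a^{-2}\gamma_{a,b}$ rather than adjusted to force the printed value.
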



\subsection{Renormalization in $ \Pi_{a,b} $}

In order to simplify the notation, we shall take coordinates $ (X,Y).$ Then, 
\begin{equation*}
P_{a,b}=P=(0,0) \; , \; \Pi_{a,b}=\Pi=\Omega(P,P^{\prime},H_1).
\end{equation*}  
For any point $ A \in \mathbb{R}^2 $ we still write $ A_1=\Psi_{a,b}(A) $ and $A_i=\Psi_{a,b}(A_{i-1}).$ 

From now on, it will be very useful Figure \ref{figpi0r2}. If we denote by $\Pi_{i}=\Psi_{a,b}^i(\Pi), \ i=0 \ldots 4 ,$ and recalling that $ \Psi_{a,b} $ is symmetric with respect to the critical lines one has 
\begin{equation*}
\Pi_1=\Psi_{a,b}(\Pi)=\Psi_{a,b}(\Omega(P,M,H_1))=\Omega(P,M_1,H_2).
\end{equation*}

As we have done in the previous subsection, let us suppose that $ M_1 \in \mathcal{T}_1^+ $ and $ H_2 \in \mathcal{T}_1^-. $ Equivalently, conditions \eqref{condel1} and \eqref{condel2} hold. We denote by $J_1$ and $ \widetilde{H}_2 $ the intersection between the critical line and the lines $\mathcal{L}(M_1,0)$ and $\mathcal{L}(P,-1)$ respectively. In coordinates $(X,Y)$ these points are given by 
\begin{equation*} 
J_1=(\gamma_{a,b}-a,-a) \mbox{ , } \widetilde{H}_2 = \left( \frac{\gamma_{a,b}}{2},-\frac{\gamma_{a,b}}{2} \right) \; .
\end{equation*}

Now, 
\begin{equation*}
\Pi_2=\Psi_{a,b}(\Omega(P,\widetilde{H}_2,J_1,M_1))=\Omega(P,\widetilde{H}_3,J_2,M_2) \; .
\end{equation*}
Let us now impose that $ \Pi_2 $ does not intersect the critical set. This is equivalent to assume that $ M_2 \in \mathcal{T}_1^+ ,$ i.e., condition \eqref{condel3} holds.

Under this assumption,
\begin{equation*}
\Pi_3=\Psi_{a,b}(\Omega(P,\widetilde{H}_3,J_2,M_2))=\Omega(P, \widetilde{H}_4,J_3,M_3).
\end{equation*} 
Let us now impose that $ \Pi_3 $ does not intersect the critical set. This is equivalent to assume that $ \widetilde{H}_4 \in \overline{PQ} ,$ i.e., condition \eqref{condel4} holds.
In this case,
\begin{equation*}
\Pi_4=\Psi_{a,b}(\Omega(P,\widetilde{H}_4,J_3,M_3))=\Omega(P, \widetilde{H}_5,J_4,M_4) \; .
\end{equation*}

Finally, in order to determine the set of values where $ \Pi_4=\Psi_{a,b}^4(\Pi) \subset \Pi,$ we need to impose that $ \widetilde{H}_5 \in \overline{PH_1} ,$ i.e., $ a^3 \gamma_{a,b} \leq 2 $ or, equivalently,
\begin{eqnarray}\label{conpi1}
b \leq \frac{2(1+a+a^3)}{a(2+a^2+a^3)} \; .
\end{eqnarray}

\begin{figure}[!ht]
\centering
\begin{minipage}{0.9 \linewidth}
\centering
{\small
\begin{tikzpicture}[xscale=3,yscale=3]

\fill[red,opacity=0.10] (0.0000,0.0000) -- (-2.0000,0.0000) -- (-1.0000,-1.0000);
\draw[red,thick] (-1.0000,-1.4430) -- (-1.0000,1.4053); 
\draw[red,thick] (1.3919,-0.0510) -- (0.0000,-1.4430); 
\draw[green,thick] (0.0000,0.0000) -- (-2.0000,0.0000) -- (-1.0000,-1.0000) -- (0.0000,0.0000); 
\draw[green,thick,dashed] (0.0000,0.0000) -- (-1.4265,0.0000) -- (-1.4265,-0.4570) -- (-0.9418,-0.9418) -- (0.0000,0.0000); 
 
\fill[black] (-2.0000,0.0000) circle (0.12mm); 
\node[black] at  (-2.0000-0.1,0.0000) {$ P^{\prime} $ }; 
\fill[black] (-1.0000,-1.0000) circle (0.12mm); 
\node[black] at  (-1.0000-0.1,-1.0000-0.05) {$ H_1 $ }; 
\fill[black] (-0.6602,-0.6602) circle (0.12mm); 
\node[black] at  (-0.6602+0.05,-0.6602-0.1) {$ \widetilde{H}_1 $ }; 
\fill[black] (-1.0000,-0.3204) circle (0.12mm); 
\node[black] at  (-1.0000-0.07,-0.3204) {$ J $ }; 
\fill[black] (-1.0000,0.0000) circle (0.12mm); 
\node[black] at  (-1.0000+0.06,0.0000+0.06) {$ M $ }; 

\fill [red,opacity=0.20] (0.0000,0.0000) -- (0.0000,-1.0929) -- (1.0929,-1.0929);
\draw[darkgray,thick] (0.0000,0.0000) -- (0.0000,-1.0929) -- (1.0929,-1.0929) -- (0.0000,0.0000); 
\fill[black] (1.0929,-1.0929) circle (0.12mm); 
\node[black] at  (1.0929+0.1,-1.0929) {$ H_2 $ }; 
\fill[black] (0.0000,-1.0929) circle (0.12mm); 
\node[black] at  (0.0000,-1.0929-0.1) {$ M_1 $ }; 
\fill[black] (0.7215,-0.7215) circle (0.12mm); 
\node[black] at  (0.7215,-0.7215+0.16) {$ \widetilde{H}_2 $ }; 
\fill[black] (0.3501,-1.0929) circle (0.12mm); 
\node[black] at  (0.3501+0.05,-1.0929-0.1) {$ J_1 $ }; 

\fill [red,opacity=0.30] (0.0000,0.0000) -- (1.1944,0.0000) -- (1.1944,0.3826) -- (0.7885,0.7885);
\draw[darkgray,thick] (0.0000,0.0000) -- (1.1944,0.0000) -- (1.1944,0.3826) -- (0.7885,0.7885) -- (0.0000,0.0000); 
\fill[black] (1.1944,0.0000) circle (0.12mm); 
\node[black] at  (1.1944,0.0000-0.1) {$ M_2 $ }; 
\fill[black] (1.1944,0.3826) circle (0.12mm); 
\node[black] at  (1.1944+0.1,0.3826) {$ J_2 $ }; 
\fill[black] (0.7885,0.7885) circle (0.12mm); 
\node[black] at  (0.7885+0.1,0.7885+0.02) {$ \widetilde{H}_3 $ }; 

\fill [red,opacity=0.40] (0.0000,0.0000) -- (0.0000,1.3053) -- (-0.4182,1.3053) -- (-0.8617,0.8617);
\draw[darkgray,thick] (0.0000,0.0000) -- (0.0000,1.3053) -- (-0.4182,1.3053) -- (-0.8617,0.8617) -- (0.0000,0.0000); 
\fill[black] (0.0000,1.3053) circle (0.12mm); 
\node[black] at  (0.0000+0.1,1.3053) {$ M_3 $ }; 
\fill[black] (-0.4182,1.3053) circle (0.12mm); 
\node[black] at  (-0.4182,1.3053+0.1) {$ J_3 $ }; 
\fill[black] (-0.8617,0.8617) circle (0.12mm); 
\node[black] at  (-0.8617,0.8617-0.1) {$ \widetilde{H}_4 $ }; 

\fill[black] (-1.4265,0.0000) circle (0.12mm); 
\node[black] at  (-1.4265,0.0000+0.1) {$ M_4 $ }; 
\fill[black] (-1.4265,-0.4570) circle (0.12mm); 
\node[black] at  (-1.4265+0.1,-0.4570) {$ J_4 $ }; 
\fill[black] (-0.9418,-0.9418) circle (0.12mm); 
\node[black] at  (-0.9418+0.1,-0.9418-0.02) {$ \widetilde{H}_5 $ }; 

\fill[black] (0.0000,0.0000) circle (0.12mm); 
\node[black] at  (0.0000+0.05,0.0000+0.1) {$ P $ };

\node[black] at  (-1.0000-0.1,0.5) {$ \mathcal{C} $ }; 
\node[black] at  (1.24,-0.4) {$ \mathcal{L}^{\prime}(b) $ }; 
\end{tikzpicture}
}
\caption{The iterates of $ \Pi_0 :$ encircled in green, the triangle $ \Pi_0 ;$ encircled in a dashed green line, the set $ \Pi_4 .$}
\label{figpi0r2}
\end{minipage}
\end{figure}
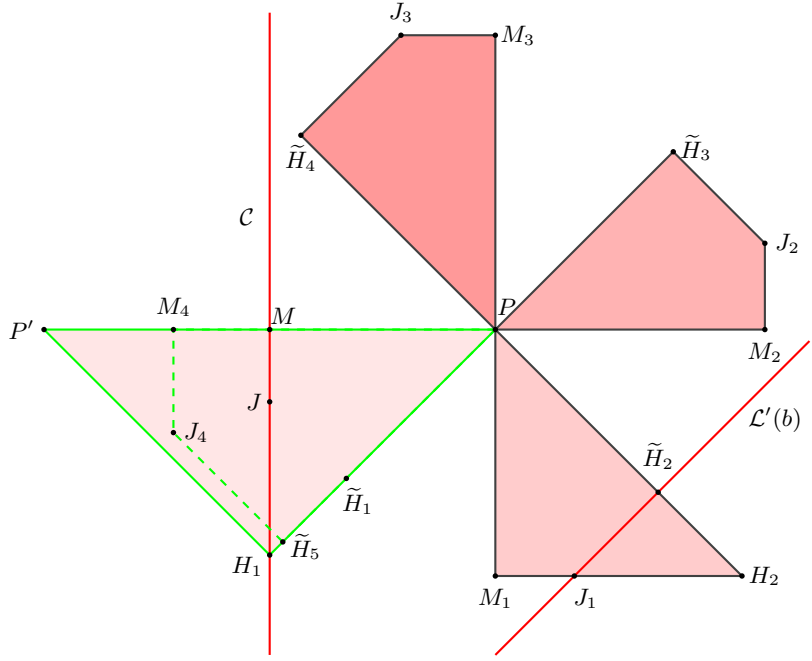

Since $ a > 1 ,$ it is clear that condition \eqref{conpi1} implies condition \eqref{condel4}. In this way, we define the set of parameters, see Figure \ref{figsetPpi},
\begin{equation}\label{defPpi}
\mathcal{P}_{\Pi}=\left\{(a,b) \in \mathcal{P} : \frac{2 + a^2 + a^3}{1 + a + a^3} \leq b \leq \frac{2(1+a+a^3)}{a(2+a^2+a^3)} \right\} .
\end{equation}

\begin{proposition}\label{propi}
For every $ (a,b) \in \mathcal{P}_{\Pi} $ the domain $ \Pi_{a,b} $ is invariant by $ \Psi_{a,b}^4 .$ Furthermore, for every $ (a,b) \in \mathcal{P}_{\Pi} $ the map $ \Psi_{a,b} $ is renormalizable in $ \Pi_{a,b} .$ Namely, $ \Psi_{a,b}^4 $ is conjugate by means of an affine change in coordinates $ \widehat{\phi}_{a,b} $ to the map
\begin{equation}\label{defpsipp}
\Psi_{a^{\prime\prime},b^{\prime\prime}} = EBM(\mathcal{T},\mathcal{C},\mathcal{L}(b^{\prime\prime}),\mathcal{O},B_{a^{\prime\prime}} )
\end{equation}
being $ a^{\prime\prime}=a^4 ,$ $ b^{\prime\prime}=a^{-1}\gamma_{a,b} $ and $ B_{a^{\prime\prime}} $ the matrix introduced in \eqref{defBa}.
\end{proposition}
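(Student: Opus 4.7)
The plan is to mirror, on $\Pi_{a,b}$, the argument carried out in Propositions \ref{prodel} and \ref{prodelp} for $\Delta_{a,b}$. The invariance of $\Pi_{a,b}$ under $\Psi_{a,b}^4$ for $(a,b)\in\mathcal{P}_\Pi$ is essentially already contained in the vertex-bookkeeping preceding the statement: conditions \eqref{condel1}, \eqref{condel2}, \eqref{condel3} and \eqref{conpi1} are, respectively, the inequalities required to force $M_1\in\mathcal{T}_1^+$, $H_2\in\mathcal{T}_1^-$, $M_2\in\mathcal{T}_1^+$, $\widetilde{H}_5\in\overline{PH_1}$; since \eqref{conpi1} implies \eqref{condel4}, the definition \eqref{defPpi} of $\mathcal{P}_\Pi$ captures exactly the parameters for which $\Pi_4\subset\Pi_{a,b}$.

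For the conjugacy I would work in the coordinates $(X,Y)=\phi_{a,b}(x,y)$ supplied by \eqref{defphi}. There $\Pi_{a,b}$ has vertices $(0,0)$, $(-2,0)$, $(-1,-1)$, and, by Remark \ref{remaM4}, $\Psi_{a,b}$ acts on $\mathcal{T}_1^+$ as $(X,Y)\mapsto(-aY,aX)$ --- a quarter-turn centred at $P$ composed with an expansion of rate $a$. Define the affine involution $\widehat{h}(X,Y)=(-X,-Y)$ and set $\widehat{\phi}_{a,b}=\widehat{h}\circ\phi_{a,b}$; then $\widehat{h}$ sends $\Pi_{a,b}$ bijectively onto $\mathcal{T}=\Omega(\mathcal{O},(2,0),(1,1))$ and commutes with every origin-centred homothety.

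The singular set of $\Psi_{a,b}^4|_{\Pi_{a,b}}$ is then exactly two curves: the segment $\overline{MH_1}=\mathcal{C}\cap\Pi$, along which $\Psi_{a,b}$ itself is singular, and the "chevron" $\overline{\widetilde{H}_1 J}\cup\overline{J\widetilde{H}_1'}$ (with $\widetilde{H}_1'$ the reflection of $\widetilde{H}_1$ in $\mathcal{C}$) obtained by pulling back $\Pi_1\cap\mathcal{L}'(b)=\overline{\widetilde{H}_2 J_1}$ one step under $\Psi_{a,b}$; conditions \eqref{condel3} and \eqref{conpi1} rule out any further critical crossing during the remaining two iterations. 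A direct computation from $\Psi_{a,b}(X,Y)=(-aY,aX)$ shows that the right half of the chevron lies on $X+Y=-\gamma_{a,b}/a$, and its reflection in $\mathcal{C}$ then lies on $X-Y=\gamma_{a,b}/a-2$. Under $\widehat{h}$ these transform respectively into $x+y=\gamma_{a,b}/a$ and $x-y=2-\gamma_{a,b}/a$, i.e., into the critical lines $\mathcal{L}(b'')$ and $\mathcal{L}'(b'')$ of the target $EBM$ with $b''=a^{-1}\gamma_{a,b}$, while $\overline{MH_1}$ goes to $\{x=1\}\cap\mathcal{T}=\mathcal{C}$. On the central sub-region of $\Pi_{a,b}$ whose four consecutive iterates remain in $\mathcal{T}_1^+$, $\Psi_{a,b}^4$ is the homothety $B_{a^4}$ centred at $P$, which is unchanged by $\widehat{h}$-conjugation. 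Hence $\widehat{\phi}_{a,b}\circ\Psi_{a,b}^4\circ\widehat{\phi}_{a,b}^{-1}$ is an $EBM$ on $\mathcal{T}$ with expansion matrix $B_{a^4}$, fixed point at $\mathcal{O}$, and critical lines $\mathcal{C}$, $\mathcal{L}(b'')$; by uniqueness it coincides with $\Psi_{a^{\prime\prime},b^{\prime\prime}}=EBM(\mathcal{T},\mathcal{C},\mathcal{L}(b''),\mathcal{O},B_{a^{\prime\prime}})$.

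The main technical obstacle I expect is the careful verification that on each of the four smoothness pieces of $\Pi_{a,b}$ the compound of reflections accumulated during four iterations of $\Psi_{a,b}$ assembles, after push-forward by $\widehat{h}$, into exactly the two folds $\mathcal{F}_{\mathcal{C}}$ and $\mathcal{F}_{\mathcal{L}(b'')}$ of the target $EBM$, with no spurious extra folds or branches. This is precisely where the specific structure of $\mathcal{P}_\Pi$ pays off: the upper bound \eqref{conpi1} keeps $\Pi_2$ and $\Pi_3$ disjoint from the critical set, so the only reflections ever encountered come from $\mathcal{C}$ and from the single critical segment $\Pi_1\cap\mathcal{L}'(b)$, and the match with $\Psi_{a^{\prime\prime},b^{\prime\prime}}$ is exact.
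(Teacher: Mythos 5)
Your proposal is correct and follows essentially the same route as the paper: invariance from the vertex bookkeeping defining $\mathcal{P}_{\Pi}$, then conjugation via $\widehat{\phi}_{a,b}=\widehat{h}\circ\phi_{a,b}$ with $\widehat{h}(X,Y)=(-X,-Y)$, using that $\Psi_{a,b}^{4}$ restricted to the central piece $\Pi_{0}=\Omega(P,\widetilde{H}_1,J,M)$ is the homothety of rate $a^{4}$. Your explicit verification that the pulled-back critical set (the segment $\overline{MH_1}$ and the chevron on $X+Y=-\gamma_{a,b}/a$ and $X-Y=\gamma_{a,b}/a-2$) is carried by $\widehat{h}$ exactly onto $\mathcal{C}$, $\mathcal{L}(b^{\prime\prime})$ and $\mathcal{L}^{\prime}(b^{\prime\prime})$ makes explicit a step the paper leaves implicit, but the argument is the same.
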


\begin{proof}
As a consequence of the construction of $ \mathcal{P}_{\Pi} ,$ if $ (a,b) \in \mathcal{P}_{\Pi} $ then $ \Pi_{a,b} $ is invariant by $ \Psi_{a,b}^4 .$ 

Now, we shall prove that $ \Psi_{a,b}^4 $ is conjugate to the map $ \Psi_{a^{\prime\prime},b^{\prime\prime}} $ given in \eqref{defpsipp}.

Let us define the points $ \widetilde{H}_1 $ and $ J $ in $ \Pi_{a,b} $ such that $ \Psi_{a,b}(\widetilde{H}_1)=\widetilde{H}_2 $ and $ \Psi_{a,b}(J)=J_1 $ (see Figure \ref{figpi0r2}). In coordinates $ (X,Y) ,$ these points are given by 
\begin{equation*} 
\widetilde{H}_1= \left( -\frac{\gamma_{a,b}}{2a},-\frac{\gamma_{a,b}}{2a} \right) \mbox{ , } J = \left( -1 , 1-\frac{\gamma_{a,b}}{a} \right)  
\end{equation*}
in such a way that $ \Pi_4 = \Psi_{a,b}^4(\Pi_0) $ where $ \Pi_0=\Omega(P,\widetilde{H}_1,J,M) .$

Moreover, since $ \Psi_{a,b}^k(\Pi_0) $ does not leave the region 
\begin{equation*} 
\mathcal{T}_1^+ = \{ (X,Y) : \ X > - 1 \mbox{ , } X-Y < \gamma_{a,b} \} 
\end{equation*}
for $ k=0,\ldots,3 $ we have that $ \Psi_{a,b | \Pi_0}^4 $ is the homothecy with expansion rate $ a^4 .$

Therefore, after the change in coordinates $ \widehat{\phi}_{a,b} = \widehat{h} \circ \phi_{a,b} ,$ where $ h(x,y)=(-X,-Y) ,$ the invariant domain $ \Pi_{a,b} $ transforms into our well-known domain $ \mathcal{T} .$ Furthermore, $ \Psi_{a,b}^4 $ coincides with $ EBM(\mathcal{T},\mathcal{C},\mathcal{L}(b^{\prime\prime}),\mathcal{O},B_{a^{\prime\prime}}) $ where  $ a^{\prime\prime}=a^4 $ and $ b^{\prime\prime}=a^{-1}\gamma_{a,b} .$
\end{proof}

\begin{figure}[!ht]
\centering
\begin{minipage}{1 \linewidth}
\centering
\includegraphics[width=0.75\textwidth]{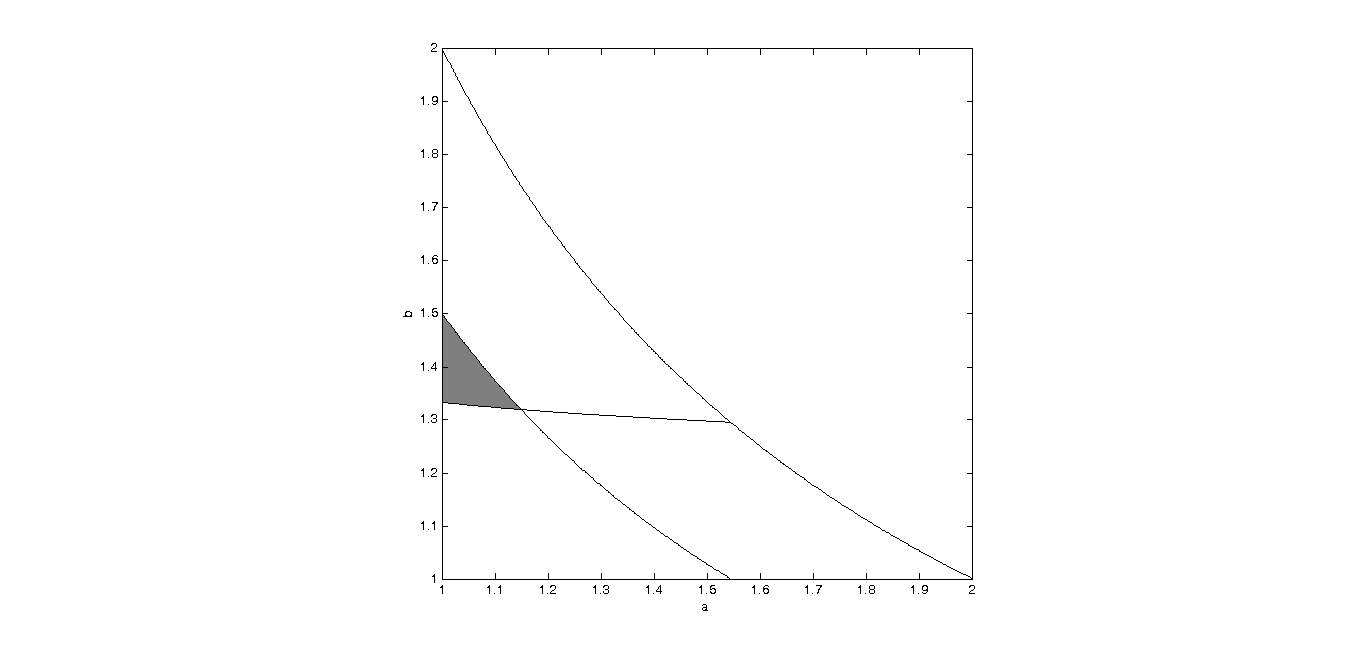}
\caption{The set of parameters $ \mathcal{P}_{\Pi} .$}
\label{figsetPpi}
\end{minipage}
\end{figure}

\begin{remark}\label{remapi}
Let us note that, given $ (a,b) \in \mathcal{P}_{\Pi} ,$ then $ (a^{\prime\prime},b^{\prime\prime}) \in \mathcal{P} .$
\end{remark}

\begin{remark}
Let us remark that condition \eqref{condel3} is not necessary to guarantee the invariance of $ \Pi_{a,b} .$ In fact, it holds that $ \Psi_{a,b}^4(\Pi_{a,b}) \subset \Pi_{a,b} $ if and only if $ (a,b) \in \mathcal{P} $ verifies conditions \eqref{condel1} and \eqref{conpi1}. However, if condition \eqref{condel3} is not satisfied, the map $ \Psi_{a,b} $ is not renormalizable in $ \mathbb{F} :$ the renormalized map is a \textit{EBM} with three folds. 
\end{remark}


\subsection{Simultaneous renormalizations. The region $\mathcal{P}_3$}

To begin with, we note that, see \eqref{defPdeltap} and \eqref{defPpi}, $ \mathcal{P}_{\Pi} = \mathcal{P}_{\Delta}^{\prime}. $ From now on, let us denote this set of parameters by $ \mathcal{P}_3 .$ According to Remark \ref{remadel} and Remark \ref{remapi} we may define the maps 
\begin{eqnarray}
H_{\Delta} & : & (a,b) \in \mathcal{P}_{3} \mapsto H_{\Delta}(a,b)=\left( a^4 , \frac{\gamma_{a,b}}{a^2} \right) \in \mathcal{P} \label{defHdel} \\[1.5ex]
H_{\Pi} & : & (a,b) \in \mathcal{P}_{3} \mapsto H_{\Pi}(a,b)=\left( a^4 , \frac{\gamma_{a,b}}{a} \right) \in \mathcal{P} \label{defHpi}
\end{eqnarray}

As a consequence of Proposition \ref{prodelp} and Proposition \ref{propi} we have the following result.

\begin{theorem}
For every $ (a,b) \in \mathcal{P}_{3} $ the map $ \Psi_{a,b} $ is simultaneously renormalizable in $ \mathbb{F} $ on two different restrictive domains. Namely:
\begin{itemize}
\item[i)] The restriction of $ \Psi_{a,b}^4 $ to $ \widetilde{\Delta}_{a,b} $ is conjugate by means of an affine change in coordinates to $ \Psi_{H_{\Delta}(a,b)} $ restricted to $ \Psi_{H_{\Delta}(a,b)}(\mathcal{T}) .$
\item[ii)] The restriction of $ \Psi_{a,b}^4 $ to $ \Pi_{a,b} $ is conjugate by means of an affine change in coordinates to $ \Psi_{H_{\Pi}(a,b)}.$
\end{itemize}
\end{theorem}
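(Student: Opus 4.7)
The plan is to read off the statement directly from Proposition~\ref{prodelp} and Proposition~\ref{propi}, since the genuinely combinatorial work has already been done in the two preceding subsections: tracking the forward images of $\Delta_{a,b}$ and $\Pi_{a,b}$ under $\Psi_{a,b}$, locating the invariant subdomains $\widetilde{\Delta}_{a,b}$ and $\Pi_{a,b}$ themselves, and writing down explicitly the affine conjugacies $\widetilde{\phi}_{a,b}$ and $\widehat{\phi}_{a,b}$.

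First I would verify that $\mathcal{P}_{3}$, introduced as the common value $\mathcal{P}_{\Pi}=\mathcal{P}_{\Delta}^{\prime}$, is truly the locus where both renormalizations apply simultaneously. Comparing \eqref{defPdeltap} with \eqref{defPpi} one sees that both sets are cut out by the same pair of inequalities
\[
\frac{2+a^{2}+a^{3}}{1+a+a^{3}} \le b \le \frac{2(1+a+a^{3})}{a(2+a^{2}+a^{3})},
\]
so the identification is immediate and no extra constraint is needed.

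Next I would invoke Proposition~\ref{prodelp} to obtain statement (i): for every $(a,b)\in\mathcal{P}_{3}$ the domain $\widetilde{\Delta}_{a,b}$ is invariant under $\Psi_{a,b}^{4}$, and the affine change of coordinates $\widetilde{\phi}_{a,b}$ conjugates $\Psi_{a,b}^{4}|_{\widetilde{\Delta}_{a,b}}$ to $\Psi_{a^{\prime},b^{\prime}}$ restricted to $\Psi_{a^{\prime},b^{\prime}}(\mathcal{T})$; matching $(a^{\prime},b^{\prime})$ with the operator $H_{\Delta}$ defined in \eqref{defHdel} then gives exactly $\Psi_{H_{\Delta}(a,b)}$. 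Statement (ii) is obtained in the same way from Proposition~\ref{propi}: $\Pi_{a,b}$ is invariant by $\Psi_{a,b}^{4}$ and $\widehat{\phi}_{a,b}$ conjugates the restriction to $\Psi_{a^{\prime\prime},b^{\prime\prime}}$ with $(a^{\prime\prime},b^{\prime\prime})=H_{\Pi}(a,b)$, by \eqref{defHpi}.

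The word \emph{simultaneously} deserves a brief justification, and this is the only place where one has to be even slightly careful: the two restrictive domains must be distinct. This follows from \eqref{defdeltapi}, since $\widetilde{\Delta}_{a,b}\subset\Delta_{a,b}$ lies on the side of $P_{a,b}$ bounded by $\overline{P_{a,b}Q}$ (above the horizontal line through the fixed point), while $\Pi_{a,b}$ lies on the opposite side, bounded by $\overline{P_{a,b}H_{1}}$. No further argument is required; the main (and entirely bureaucratic) obstacle is bookkeeping, i.e.\ checking that the numerical images $(a^{\prime},b^{\prime})$ and $(a^{\prime\prime},b^{\prime\prime})$ produced by the two propositions agree on the nose with the formulas \eqref{defHdel} and \eqref{defHpi} chosen to define $H_{\Delta}$ and $H_{\Pi}$.
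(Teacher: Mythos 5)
Your proposal is correct and coincides with the paper's own treatment: the theorem is stated there precisely as an immediate consequence of Proposition~\ref{prodelp} (for the domain $\widetilde{\Delta}_{a,b}$) and Proposition~\ref{propi} (for $\Pi_{a,b}$), after observing that $\mathcal{P}_{\Pi}=\mathcal{P}_{\Delta}^{\prime}=\mathcal{P}_{3}$ and reading the renormalized parameters off as $H_{\Delta}(a,b)$ and $H_{\Pi}(a,b)$. Your extra remark that the two restrictive domains are genuinely distinct (lying on opposite sides of the horizontal line through $P_{a,b}$) is a harmless addition the paper leaves implicit.
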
 

Let us consider the domain of parameters 
\begin{equation}\label{defpest}
\overline{\mathcal{P}} = \{(a,b) \in [1,2] \times [1,2) : \; ab<2 \}.
\end{equation}
and let $ D $ be an arbitrarily small neighborhood of $ \overline{\mathcal{P}} $ not containing the point $ (1,2) .$ We also assume that $ D $ is chosen in such a way that the maps, see \eqref{defHdel}, \eqref{defHpi} and \eqref{defgamma},
\begin{equation*}
H_{\Delta}(a,b) = \left( a^4,\frac{-2+b+ab}{a^2(1+a-ab)} \right) \mbox{ , } H_{\Pi}(a,b) = \left( a^4,\frac{-2+b+ab}{a(1+a-ab)} \right)
\end{equation*}
are well defined in $ D .$

\begin{proposition}\label{proHdel}
For $H=H_{\Delta}$ it holds that:
\begin{enumerate}
\item Each vertical fiber $ V_{\alpha} = \left\{ (a,b) \in D : \; a = \alpha \right\} $ is sent by $ H $ into the vertical fiber $ V_{\alpha^{4}} $ and the map $ H(\alpha,\cdot) $ is strictly increasing on $ V_{\alpha} $ for every $ \alpha>1 .$
\item $ H $ is an area expanding diffeomorphism on $ D $ having a unique fixed point $ P^{\ast}=(1,\sqrt{2})$. The eigenvalues of $ DH(P^{\ast}) $ are $ \lambda_{1}=4 $ and $ \lambda_{2}=3+2\sqrt{2} $ with eigenvectors $ v_{1}=\left( 1 , \frac{2-3\sqrt{2}}{7} \right)$ and $ v_{2}=(0,1) ,$ respectively.
\item $P^{\ast}$ is a global repeller: for every neighborhood $ W \subset D $ of $ P^{\ast} $ it holds that 
\begin{equation*}
\bigcap_{n\in \mathbb{N}}H^{-n}(W)=\left\{ P^{\ast}\right\}
\end{equation*} 
In fact, there exists a family of curves $\mathcal{F} =\left\{\eta_{s}: \; s\in \mathbb{R} \cup\left\{ \infty \right\} \right\} $ such that
	\begin{enumerate}
	\item For every $ (a,b)\in D $ there exists $ s \in \mathbb{R} \cup \left\{ \infty \right\} $ such that $ (a,b)\in \eta_{s} .$
	\item For every $ s \in \mathbb{R} \cup \left\{ \infty \right\} $ it holds that $ H^{-1}(\eta_{s}) \subset \eta_{s} .$
	\item If $ s_{1} \neq s_{2},$ then $ \eta_{s_{1}} \cap \eta_{s_{2}} = \left\{ P^{\ast} \right\} .$ A tangent vector of $ \eta_{s} $ at $ P^{\ast} $ is $ v_{1} $ if $ s\in \mathbb{R} $ and $ v_{2} $ if $ s=\infty .$
	\end{enumerate}
\end{enumerate}
\end{proposition}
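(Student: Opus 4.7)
For assertions (1) and (2) the work is essentially computational. The first coordinate of $H_\Delta(a,b)$ equals $a^4$ independent of $b$, so $H_\Delta(V_\alpha)\subset V_{\alpha^4}$; differentiating the second coordinate yields
\begin{equation*}
\frac{\partial H_{\Delta,2}}{\partial b}(a,b)=\frac{1+a^{2}}{a^{2}(1+a-ab)^{2}}>0
\end{equation*}
throughout $D$, giving the strict monotonicity on every vertical fiber. For the fixed point, $a^{4}=a$ forces $a=1$ in $D$ and the second equation reduces to $b^{2}=2$, so $P^{\ast}=(1,\sqrt 2)$ is the unique fixed point. Since $\partial_b H_{\Delta,1}\equiv 0$ the Jacobian is lower triangular; its diagonal entries $4a^{3}$ and $\partial_b H_{\Delta,2}$ evaluated at $P^{\ast}$ yield the eigenvalues $4$ and $3+2\sqrt 2$. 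The vector $v_2=(0,1)$ is an eigenvector of the $(2,2)$-entry, and solving the remaining linear equation after computing $\partial_a H_{\Delta,2}(P^{\ast})=2-\sqrt 2$ produces $v_1=(1,(2-3\sqrt 2)/7)$. Area expansion reduces to $\det DH_\Delta=4a(1+a^{2})/(1+a-ab)^{2}>1$, a bound that holds on $\overline{\mathcal P}$ using $a\geq 1$ and $ab<2$, and therefore on $D$ if chosen sufficiently narrow. Injectivity on $D$ is immediate (first coordinate fixes $a$, then monotonicity in $b$ fixes $b$), which together with non-vanishing Jacobian gives the diffeomorphism property.

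For the global-repeller statement in (3), I take $q=(a_0,b_0)\in D\setminus\{P^{\ast}\}$. If $a_0\neq 1$, the first coordinate of $H_\Delta^n(q)$ equals $a_0^{4^n}$, which tends to $0$ or $+\infty$ and therefore leaves the bounded set $D$. If $a_0=1$, the orbit is confined to $V_1$, where $H_\Delta$ restricts to the one-dimensional map $g(b)=(2b-2)/(2-b)$; the only fixed point of $g$ in $[1,2)$ is $\sqrt 2$, with $g'(\sqrt 2)=3+2\sqrt 2>1$, and an elementary analysis of $g$ on $[1,\sqrt 2)$ and $(\sqrt 2,2)$ shows that every other orbit leaves $[1,2)$ in finite time (decreasing below $1$ or increasing past $2$). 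Thus no point other than $P^{\ast}$ has its forward orbit trapped in any neighborhood $W\subset D$ of $P^{\ast}$, giving $\bigcap_n H_\Delta^{-n}(W)=\{P^{\ast}\}$.

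For the foliation $\mathcal F$ I would invoke Sternberg's smooth linearization theorem: a short check confirms that the pair $\{4,3+2\sqrt 2\}$ admits no multiplicative resonance, so there is a smooth conjugacy $\phi$ between $H_\Delta$ and its linearization $L=\mathrm{diag}(4,3+2\sqrt 2)$ in a neighborhood $U$ of $P^{\ast}$, expressed in eigenbasis coordinates $(u,v)$ aligned with $(v_1,v_2)$. Setting $\alpha=\log(3+2\sqrt 2)/\log 4>1$, the curves
\begin{equation*}
\tilde\eta_s=\{(u,v):v=s|u|^{\alpha}\operatorname{sgn}(u)\},\ s\in\mathbb R,\qquad \tilde\eta_\infty=\{u=0\}
\end{equation*}
foliate $\phi(U)$, are $L^{-1}$-invariant by the identity $4^\alpha=3+2\sqrt 2$, meet pairwise only at the origin, and are tangent to the $u$-axis at $0$ when $s\in\mathbb R$ (because $\alpha>1$) and to the $v$-axis when $s=\infty$. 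Pulling back by $\phi$ produces local leaves $\eta_s\cap U$ with the claimed tangencies $v_1$, $v_2$ at $P^{\ast}$. To extend globally, I would push each local leaf forward under iterates of $H_\Delta$: since $H_\Delta^{-1}$ contracts toward $P^{\ast}$ in a neighborhood (eigenvalues $<1$ there) and $D$ is a neighborhood of $P^{\ast}$, every $q\in D$ admits some backward iterate landing in $U$, which identifies the unique leaf through $q$; the backward invariance $H_\Delta^{-1}(\eta_s)\subset\eta_s$ on $D$ is inherited from the linear statement. The main technical obstacle is precisely this global extension: one must bookkeep the domain of $H_\Delta^{-1}$, check that backward orbits of arbitrary points of $D$ remain in $D$ long enough to reach $U$, and verify that the pieced-together $\eta_s$ are still embedded smooth curves meeting only at $P^{\ast}$.
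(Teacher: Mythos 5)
Your proposal is correct and follows essentially the same route as the paper: direct computation of the lower-triangular Jacobian for statements (1)--(2), reduction to the invariant fiber $V_{1}$ for the repeller property (you analyse the forward map $g(b)=\frac{2b-2}{2-b}$ where the paper equivalently notes that $H^{-1}|_{V_{1}}(b)=\frac{2(b+1)}{2+b}$ is a contraction), and Sternberg linearization plus the $L$-invariant power-law curves $v=s|u|^{\nu}$ for the foliation. Your explicit non-resonance check and your flagging of the global-extension bookkeeping are welcome refinements of details the paper compresses into the single claim that $H^{-n}(D)\subset U$ for some $n$.
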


\begin{proof}
The first statement is immediate from the definition of $H$ and the fact that 
\begin{equation*}
\frac{\partial H_{2}}{\partial b}(a,b)=\frac{a^{2}+1}{a^{2}(1+a-ab)^{2}}>0,
\end{equation*}
where $ H_{2} $ stands for the second component of $ H .$

First statement, particularly, implies that $H$ is injective. Furthermore, $H$ is differentiable with
\begin{equation*}
det(DH(a,b))=\frac{4a(a^{2}+1)}{(1+a-ab)^{2}}>4,
\end{equation*}
for every $ (a,b) \in D. $ Thus, $ H $ is an area expanding diffeomorphism. The rest of the claims in the second statement are easily obtained from  the fact that
\begin{equation*} 
DH(1,\sqrt{2}) =
\left(
\begin{array}{cc}
4 & 0 \\ 
2-\sqrt{2} & 3+2\sqrt{2}
\end{array} 
\right) .
\end{equation*} 

To prove the last statement we begin by observing that, from the definition of $H,$ it is clear that $\bigcap_{n\in \mathbb{N}}H^{-n}(W)$ is contained in the fiber $V_{1}$. The restriction of $H^{-1}$ to this fiber is a contractive map given by $H^{-1}(1,b)= \left(1,\frac{2(b+1)}{2+b} \right)$ whose fixed point is $P^{\ast}$. Therefore, $\bigcap_{n\in \mathbb{N}}H^{-n}(W)=\left\{ P^{\ast}\right\}.$

In order to obtain the family of curves $\mathcal{F}$ announced in the third statement we use the Linearization Theorem of Sternberg \cite{Ste}. According to this result, there exist a neighborhood $ U $ of $ P^{\ast},$ a neighborhood $ V $ of $\mathcal{O} $ and a $ C^{\infty} $-diffeomorphism $ h : U \rightarrow V $ such that
\begin{equation*}
h \circ H(a,b) = L \circ h(a,b)
\end{equation*} 
for every $ (a,b) \in H^{-1}(U),$ where $ L $ is the linear map $ L(x,y) = (\lambda_{1}x,\lambda_{2}x) .$

The family $ \mathcal{F}^{\prime} $ of curves $ \eta^{\prime}_{s} $ given by $ y = sx^{\nu} $ with $ \nu = \frac{\log\lambda_{2}}{\log\lambda_{1}} $ and $ s \in \mathbb{R} \cup \left\{ \infty \right\} $ ($\eta^{\prime}_{\infty}$ is, by definition, the straight line $ x=0 $) satisfies
\begin{enumerate}
\item For every $ (x,y)\in \mathbb{R}^{2} $ there exists $ s \in \mathbb{R} \cup \left\{ \infty \right\}$ such that $ (x,y) \in \eta^{\prime}_{s} .$
\item For every $ s \in \mathbb{R} \cup \left\{ \infty \right\} $ it holds that $ L^{-1}(\eta^{\prime}_{s}) \subset \eta^{\prime}_{s}.$
\item If $ s_{1} \neq s_{2},$ then $ \eta^{\prime}_{s_{1}} \bigcap \eta^{\prime}_{s_{2}} = \left\{ \mathcal{O} \right\} .$ A tangent vector of $ \eta^{\prime}_{s} $ at $ \mathcal{O} $ is $ (1,0) $ if $ s \in \mathbb{R} $ and $ (0,1) $ if $ s=\infty .$
\end{enumerate}
Since there exists $ n \in \mathbb{N} $ such that $ H^{-n}(D) \subset U ,$ we complete the proof by taking the family $\mathcal{F} $ formed by the curves $ \eta_{s} = H^{n}(h^{-1}(\eta^{\prime}_{s})) .$
\end{proof}
 
\begin{remark}\label{remaP3Hdel}
Since $ \mathcal{P}_{3} = \left\{ (a,b)\in \mathcal{P} : \; a^{2} \leq \gamma_{a,b},  \; a^{3} \gamma_{a,b} \leq 2 \right\} $ it follows that
\begin{equation*}
H(\mathcal{P}_{3})=\left\{ (a,b)\in \mathcal{P} : \; 1\leq b \leq 2a^{-5/4} \right\} .
\end{equation*}
One may check that $\mathcal{P}_{3}\subset H(\mathcal{P}_{3})$, see Figure \ref{figHdelP3}. Hence, there exists a chain 
\begin{equation}\label{chain}
A_{0} \varsupsetneq A_{1} \varsupsetneq A_{2} \varsupsetneq \dots \varsupsetneq A_{n} \varsupsetneq \dots 
\end{equation}
of subsets of $\mathcal{P}$ such that:
\begin{enumerate}
\item $A_{0}=H(\mathcal{P}_{3})$ and $A_{1}=\mathcal{P}_{3}$. 
\item $H(A_{n})=A_{n-1}$ for every $ n \in \mathbb{N} .$
\item $\bigcap_{n\in \mathbb{N}}cl(A_{n})=\{ P^{\ast}\}$, where $cl(A_{n})$ denotes the clousure of $A_{n}$.
\end{enumerate}
This means that the family of \textit{EBMs} $\mathbb{F}$ introduced in \eqref{defF} can be renormalized any finite number of times, whenever the maps $ \Psi_{a,b} \in \mathbb{F} $ belong to the precise above defined set $ A_j .$
\end{remark} 

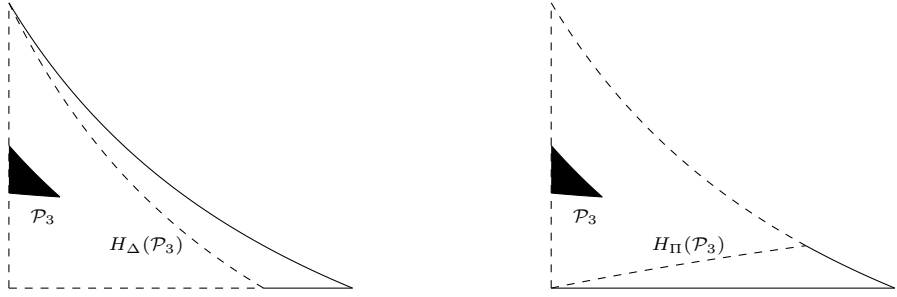
\begin{figure}[!ht]
\centering
\subfigure{\label{figHdelP3}
\begin{minipage}{0.45 \linewidth}
\centering
{\scriptsize
\begin{tikzpicture}
\begin{axis}[
			xscale=0.8,yscale=0.8,
            xmin=0.9,xmax=2.1,
            ymin=0.9,ymax=2.1,
            axis x line=none,
            axis y line=none
            ]

             
	\addplot[black,dashed] coordinates {(1,1) (1,2)};
	\addplot[black] coordinates {(2^(4/5),1) (2,1)};
            
            
    \addplot [domain=1:2,samples=50]({x},{(2/x)}); 
    \addplot [name path=A,domain=1:2^(1/5),samples=50]({x},{(2+x^2+x^3)/(1+x+x^3)});  
    \addplot [name path=B,domain=1:2^(1/5),samples=50]({x},{2*(1+x+x^3)/(x*(2+x^2+x^3))});
    
    \addplot [black,dashed,domain=1:2^(1/5),samples=50]({x^4},{1}); 
    \addplot [black,dashed,domain=1:2^(1/5),samples=50]({x^4},{2*x^(-5)}); 

	\addplot fill between[of=A and B];
    	

				
	\node[black] at (axis cs:1.1000,1.2500) {$\mathcal{P}_3$}; 
	\node[black] at (axis cs:1.4000,1.1500) {$H_{\Delta}(\mathcal{P}_3)$}; 
		
\end{axis}
\end{tikzpicture}
}
\end{minipage}
}
\subfigure{\label{figHpiP3}
\begin{minipage}{0.45 \linewidth}
\centering
{\scriptsize
\begin{tikzpicture}
\begin{axis}[
			xscale=0.8,yscale=0.8,
            xmin=0.9,xmax=2.1,
            ymin=0.9,ymax=2.1,
            axis x line=none,
            axis y line=none
            ]

             
	\addplot[black,dashed] coordinates {(1,1) (1,2)};
	\addplot[black] coordinates {(1,1) (2,1)};
            
            
    \addplot [domain=2^(4/5):2,samples=50]({x},{(2/x)}); 
    \addplot [name path=A,domain=1:2^(1/5),samples=50]({x},{(2+x^2+x^3)/(1+x+x^3)});  
    \addplot [name path=B,domain=1:2^(1/5),samples=50]({x},{2*(1+x+x^3)/(x*(2+x^2+x^3))});
    
	\addplot [black,dashed,domain=1:2^(1/5),samples=50]({x^4},{x}); 
    \addplot [black,dashed,domain=1:2^(1/5),samples=50]({x^4},{2*x^(-4)}); 
    	
	\addplot fill between[of=A and B];

				
	\node[black] at (axis cs:1.1000,1.2500) {$\mathcal{P}_3$}; 
	\node[black] at (axis cs:1.4000,1.1500) {$H_{\Pi}(\mathcal{P}_3)$}; 

\end{axis}
\end{tikzpicture}
}
\end{minipage}
}
\caption{(a) Filled in black, the set $ \mathcal{P}_3 ;$ encircled in a dashed black line, the set $ H_{\Delta}(\mathcal{P}_3) .$ (b) Filled in black, the set $ \mathcal{P}_3 ;$ encircled in a dashed black line, the set $ H_{\Pi}(\mathcal{P}_3) .$}
\end{figure}


Similar arguments used to prove Proposition \ref{proHdel} may be used to obtain a similar result for $H=H_{\Pi}.$

\begin{proposition}\label{proHpi}
For $H=H_{\Pi}$ it holds that:
\begin{enumerate}
\item Each vertical fiber $ V_{\alpha} = \left\{(a,b)\in D : \; a = \alpha \right\} $ is sent by $ H $ into the vertical fiber $ V_{\alpha^{4}} $ and the map $ H(\alpha,\cdot) $ is strictly increasing on $ V_{\alpha} $ for every $ \alpha>1 .$
\item $ H $ is an area expanding diffeomorphism on $ D $ having a unique fixed point $ P^{\ast}=(1,\sqrt{2}) .$ The eigenvalues of $ DH(P^{\ast}) $ are $ \lambda_{1}=4 $ and $ \lambda_{2}=3+2\sqrt{2} $ with eigenvectors $ v_{1} = \left(  1,-\frac{2+4\sqrt{2}}{7} \right) $ and $ v_{2}=(0,1) ,$ respectively.
\item $P^{\ast}$ is a global repeller: for every neighborhood $ W \subset D $ of $ P^{\ast} $ it holds that 
\begin{equation*}
\bigcap_{n\in \mathbb{N}}H^{-n}(W)=\left\{ P^{\ast}\right\}
\end{equation*} 
In fact, there exists a family of curves $\mathcal{F} =\left\{\eta_{s}: \; s\in \mathbb{R} \cup\left\{ \infty \right\} \right\} $ such that
	\begin{enumerate}
	\item For every $ (a,b)\in D $ there exists $ s \in \mathbb{R} \cup \left\{ \infty \right\} $ such that $ (a,b)\in \eta_{s} .$
	\item For every $ s \in \mathbb{R} \cup \left\{ \infty \right\} $ it holds that $ H^{-1}(\eta_{s}) \subset \eta_{s} .$
	\item If $ s_{1} \neq s_{2},$ then $ \eta_{s_{1}} \cap \eta_{s_{2}} = \left\{ P^{\ast} \right\} .$ A tangent vector of $ \eta_{s} $ at $ P^{\ast} $ is $ v_{1} $ if $ s\in \mathbb{R} $ and $ v_{2} $ if $ s=\infty .$
	\end{enumerate}
\end{enumerate}
\end{proposition}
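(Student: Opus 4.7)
The plan is to mirror, step for step, the argument used in Proposition \ref{proHdel}, modifying only those computations that depend on the denominator factor in the second component of $H$ being $a$ rather than $a^2$. For the first claim, since $H_\Pi$ has the form $(a,b)\mapsto (a^4, H_2(a,b))$ with $H_2(a,b) = \frac{-2+b+ab}{a(1+a-ab)}$, fiber preservation is immediate, and a direct application of the quotient rule yields
\begin{equation*}
\frac{\partial H_2}{\partial b}(a,b) = \frac{1+a^2}{a(1+a-ab)^2} > 0,
\end{equation*}
which gives strict monotonicity of $H_\Pi(\alpha,\cdot)$ on $V_\alpha$ and, combined with fiber preservation, injectivity of $H_\Pi$ on $D$.

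For the second claim, I would compute
\begin{equation*}
\det DH_\Pi(a,b) = 4a^3 \cdot \frac{\partial H_2}{\partial b}(a,b) = \frac{4a^2(1+a^2)}{(1+a-ab)^2} > 1
\end{equation*}
on $D$, establishing area expansion and thus making $H_\Pi$ a diffeomorphism onto its image. A direct substitution confirms $H_\Pi(1,\sqrt{2})=(1,\sqrt{2})$; uniqueness follows because any fixed point must satisfy $a^4=a$, forcing $a=1$, and the restriction $H_\Pi|_{V_1}$ sends $(1,b)$ to $(1,\frac{2(b-1)}{2-b})$, whose only fixed point in $[1,2)$ is $\sqrt{2}$. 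Since the first component depends only on $a$, the matrix $DH_\Pi(P^\ast)$ is lower triangular, so its eigenvalues are the diagonal entries $4$ and $3+2\sqrt{2}$; the eigenvector $v_2=(0,1)$ is immediate, while $v_1$ is obtained by solving $(DH_\Pi(P^\ast)-4I)v_1 = 0$ after computing $\partial H_2/\partial a$ at $P^\ast$ via the quotient rule, which pins down the slope as $-(2+4\sqrt{2})/7$.

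For the third claim, I would follow verbatim the proof of the analogous statement in Proposition \ref{proHdel}. Since $a^4\neq a$ for $a\neq 1$, any point in $\bigcap_n H_\Pi^{-n}(W)$ must lie in $V_1$; on $V_1$, the restriction $H_\Pi^{-1}(1,b) = (1,\frac{2(b+1)}{b+2})$ is a contraction with fixed point $\sqrt{2}$, so the intersection collapses to $\{P^\ast\}$. The invariant family $\mathcal{F}$ of curves through $P^\ast$ is then obtained by invoking Sternberg's $C^\infty$-linearization theorem \cite{Ste} near $P^\ast$, pulling back the canonical family $y=sx^\nu$ (with $\nu = \log\lambda_2/\log\lambda_1$) of curves invariant under the linear model, and globalizing by forward iteration of $H_\Pi$, using that $H_\Pi^{-n}(D)\subset U$ for sufficiently large $n$.

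The only step requiring genuine computational care is the determination of $\partial H_2/\partial a$ at $P^\ast$, on which the exact form of $v_1$ depends; beyond that, the proof is essentially a transcription of the argument for $H_\Delta$, with every step going through identically thanks to the identical structure the two maps share on the fiber $V_1$.
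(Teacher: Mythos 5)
Your proposal is correct and follows essentially the same route as the paper, which itself only remarks that Proposition \ref{proHpi} follows by ``similar arguments'' to Proposition \ref{proHdel}; your computations ($\partial H_2/\partial b = (1+a^2)/(a(1+a-ab)^2)$, the lower-triangular Jacobian with diagonal $4$ and $3+2\sqrt{2}$ at $P^{\ast}$, the identical fiber map on $V_1$, and the Sternberg linearization step) are exactly the required adaptations and check out, including the eigenvector slope $-(2+4\sqrt{2})/7$ obtained from $\partial H_2/\partial a(P^{\ast})=2$.
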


\begin{remark}\label{remaP3Hpi}
For $H=H_{\Pi}$ it follows that
\begin{equation*}
H(\mathcal{P}_{3})=\left\{ (a,b)\in \mathcal{P} : \; ab \leq 2 , \; b \geq a^{1/4}\right\} .
\end{equation*}
Then one may check that $\mathcal{P}_{3} \subset H(\mathcal{P}_{3}),$ see Figure \ref{figHpiP3}, and hence a chain like the one given in \eqref{chain} may be defined. Therefore,the family $ \mathbb{F} $ of \textit{EBM} can be also renormalized any finite number of times on the restrictive domain $\Pi.$
\end{remark}

\section{Proof of Theorem \ref{tmab}}\label{sectmab}
Let us consider the renormalization operator $H=H_{\Delta}$ given in \eqref{defHdel}. First statement in Theorem \ref{tmab} easily follows from Remark \ref{remaP3Hdel}. In fact, it suffices to take into account that the curve of parameters, see also \eqref{defgam0}, given by 
\begin{equation*}
\gamma_0=\left\{(16t^8,\frac{1}{2t^3}): \; t\in\left[\frac{1}{\sqrt{2}},\frac{1}{\sqrt[5]{4}}\right] \right\}
\end{equation*}
cuts each one of the sets $A_n$ defined in Remark \ref{remaP3Hdel}. Let us write
\begin{equation}\label{parametrization}
\gamma_0(t) = \left( 16t^8 , \frac{1}{2t^3} \right),
\end{equation}
for every $t\in[\frac{1}{\sqrt{2}},\frac{1}{\sqrt[5]{4}}].$ Choosing $t_n$ such that $\gamma_0(t)\in A_n,$ for every $t\in (\frac{1}{\sqrt{2}},t_n)$ the first statement holds.

In order to prove the second statement of Theorem \ref{tmab}, let us consider the invariant curve $ \eta_n $ passing through the point $ (2^{\frac{1}{2^{n+1}}},1) $ (see statement 3(a) in Proposition \ref{proHdel}).

Let us observe that the curve of parameters $\gamma_0$ given in \eqref{parametrization} may be written as
\begin{equation}\label{funciongamma}
\gamma_0=\{(a,\varphi_0(a)): \; a\in[1,a_0]\}   
\end{equation}
with $\varphi_0(a)=\sqrt{2}a^{-\frac{3}{8}}$ and $a_0=2^\frac{4}{5}.$ 

From statement 3(c) in Proposition \ref{proHdel}, the slope at $ P^{\ast} $ of any curve $ \eta_n $ is 
\begin{equation}\label{defpte}
\alpha=\frac{2-3\sqrt{2}}{7} \; .
\end{equation} 
Since $\varphi_{0}^{\prime}(1)=-\frac{3\sqrt{2}}{8}<\alpha,$ it is clear that for each natural number $ n $ the curve $ \gamma_0 $ intersects $\eta_n$ in a first point $\gamma_0(t_n^{\ast})\neq P^{\ast}.$ This means that, there exists some minimum value of $t,$ denoted by $t_n^{\ast}$ such that $\gamma_0(t_n^{\ast})\in \eta_n.$ Let $k\in \mathbb{N}$ such that
$\gamma_0(t_n^{\ast})\in A_k\setminus A_{k+1}.$ Then, there exists some value of $t,$ denoted by $\overline{t}_n$ such that $\overline{t}_n \in (\frac{1}{\sqrt{2}}, t_n^{\ast})$ and $H^{k+1}(\gamma_0(\overline{t}_n))=(a_n,1),$ with $a_n\leq 2^{\frac{1}{2^{n+1}}},$ see Figure \ref{figfinal}. This implies that there exists some interval of parameters $I_n=(\hat{t}_n,\overline{t}_n)$ such that, for every $t\in I_n$ one has $H^{k+1}(\gamma_0(t))\in \mathcal{P}_{1,m+1} $ with $ m\geq n,$ see \eqref{defP1n}. Therefore, $ \Lambda_t $ exhibits, according to Proposition \ref{procoedel}, $ 2^m\geq 2^n $ strange attractors whenever $ t \in I_n .$

\begin{figure}[!ht]
\centering
\begin{minipage}{0.9 \linewidth}
\centering
{\small
\begin{tikzpicture}
\draw (0,5) -- (0,0) -- (5,0);
\draw  plot[smooth, tension=.7] coordinates {(0,4.5)};
\draw  plot[smooth, tension=.7] coordinates {(0,4.5) (1,4) (5,3.5)};
\draw (0,4.5) .. controls (0,4.6) and (1,4) .. (4.2,-0.5);
\draw (0,4.5) .. controls (0,4.5) and (0.5,1) .. (4.2,-0.5);

\node [black] at (0,5.25) {$ a=1 $}; 
\node [black] at (5.5,0) {$ b=1 $}; 

\draw[->]  (0,4.5) -- (2,4.25);
\node [black] at (2.0,4.5) {$ v_1 $}; 

\filldraw [black] (0,4.5) circle (1pt);
\node [black] at (-0.25,4.5) {$ P^{\ast} $}; 

\filldraw [black] (0.5,4.15) circle (1pt);
\node [black] at (1.15,5) {$ \gamma_0(t_n^{\ast}) $}; 
\draw[->]  (1,4.75) -- (0.6,4.25);

\filldraw [black] (2.35,2) circle (1pt);
\node [black] at (3.3,2) {$ H^{k}(\gamma_0(t_n^{\ast}))$}; 

\filldraw [black] (4.2,-0.5) circle (1pt);
\node [black] at (4.2,-0.8) {$ H^{k+1}(\gamma_0(t_n^{\ast}))$}; 

\node [black] at (4.25,3.75) {$ \gamma_0 $}; 
\node [black] at (3.5,1) {$ \eta_n $};
\node [black] at (0.90,1) {$ H^{k+1}(\gamma_0) $}; 

\filldraw [black] (3.2,0) circle (1pt);
\node [black] at (2.4,-0.3) {$ H^{k+1}(\gamma_0(\overline{t}_n))$};
\end{tikzpicture}
}
\caption{The relative position between $\gamma_0$ and $\eta_n.$}
\label{figfinal}
\end{minipage}
\end{figure}
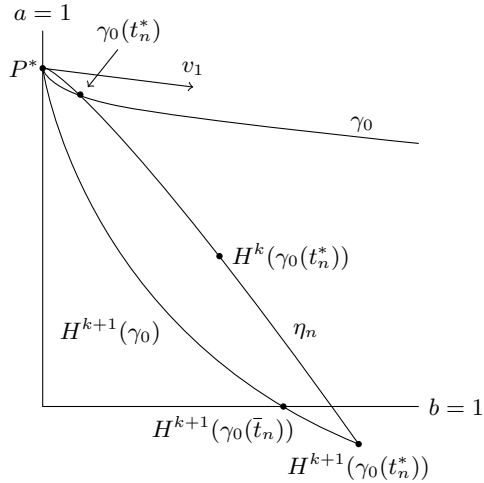

\begin{remark}
We could prove Theorem \ref{tmab} by using the map $H_\Pi$ instead of $H_\Delta.$ In this case, the slope at $ P^{\ast} $ of the curves $ \eta_n $ given in statement 3(a) of Proposition \ref{proHpi} is
\begin{equation*}
\beta=-\frac{2+4\sqrt{2}}{7} \; .
\end{equation*} 
Since $\varphi_{0}^{\prime}(1)>\beta ,$ then for every $ n \in \mathbb{N} $ there exists a natural number $k$ and a sequence of values of $t,$ denoted by $\{t_n^\Pi\}$ such that $H^{k+1}(\gamma_0(t_n^\Pi))=(a_n,b_n)\in \mathcal{P}_{2,n+1},­$ with $a_nb_n=2,$ see \eqref{defP2n}. Therefore, using Proposition \ref{procoepi}, it is easy to obtain a new interval of parameters $\widetilde{I}_n$ for which $\Lambda_t$ has at least $2^n$ strange attractors whenever $t\in \widetilde{I}_n.$
\end{remark}


\section*{Acknowledgements} \
This work has been supported by project MINECO-15-MTM2014-56953-P.


\end{document}